\title{On projective threefolds of general type with small positive geometric genus}
\author{Meng Chen, Yong Hu, Matteo Penegini}
\address{\rm School of Mathematical Sciences, Fudan University, Shanghai 200433, China}
\email{mchen@fudan.edu.cn}
\address{\rm School of Mathematics, Korea Institute for Advanced Study, 85 Hoegiro, Dongdaemun-gu, Seoul 02455, Republic of Korea}
\email{yonghu11@kias.re.kr}
\address{\rm Universit\`a degli Studi di Genova, DIMA Dipartimento di Matematica, I-16146 Genova, Italy }
\email{penegini@dima.unige.it}
\thanks{The first author was supported by National Natural Science Foundation of China (\#12071078, \#11731004) and Program of Shanghai Subject Chief Scientist (\#16XD1400400). The second author is supported by a KIAS Individual Grant (MP062501)
at Korea Institute for Advanced Study. The third author was partially supported by PRIN 2015 ``Geometry of Algebraic Varieties'' and by GNSAGA of INdAM}
\newcommand{\bQ}{{\mathbb Q}}
\newcommand{\bP}{{\mathbb P}}
\newcommand{\roundup}[1]{\lceil{#1}\rceil}
\newcommand{\rounddown}[1]{\lfloor{#1}\rfloor}
\newcommand\lrw{\longrightarrow}
\newcommand\OO{{\mathcal{O}}}
\newcommand{\lsgeq}{\succcurlyeq}
\newcommand{\Mov}{\text{Mov}}
\newcommand{\simQ}{\sim_{\mathbb{Q}}}
\newtheorem{thm}{Theorem}[section]
\newtheorem{lem}[thm]{Lemma}
\newtheorem{cor}[thm]{Corollary}
\newtheorem{prop}[thm]{Proposition}
\theoremstyle{definition}
\newtheorem{defn}[thm]{Definition}
\newtheorem{exmp}[thm]{Example}
\newtheorem{rem}[thm]{Remark}
\theoremstyle{remark}
\begin{document}
\begin{abstract} 
In this paper we study the pluricanonical maps of minimal projective 3-folds of general type with geometric genus $1$, $2$ and $3$. We go in the direction pioneered by
Enriques and Bombieri, and other authors, pinning down, for low
projective genus, a finite list of exceptions to the birationality of
some pluricanonical map.  In particular, apart from a finite list of weighted baskets, we prove the birationality of $\varphi_{16}$, $\varphi_{6}$ and $\varphi_{5}$ respectively. 
% We establish a first step towards the classification of minimal projective 3-folds of general type with the geometric genus $1$, $2$ and $3$ by studying their pluricanonical maps. In particular, apart from a finite list of weighted baskets, we prove the birationality of $\varphi_{16}$, $\varphi_{6}$ and $\varphi_{5}$ respectively. 
\end{abstract}
\maketitle

%%%%%%%%%
\pagestyle{myheadings}
\markboth{\hfill M. Chen, Y. Hu and M. Penegini\hfill}{\hfill On projective 3-folds of general type with small positive geometric genus\hfill}
\numberwithin{equation}{section}
%%%%%%%%%%%%
%\tableofcontents

\section{\bf Introduction}

Studying geometric properties of pluricanonical divisors and pluricanonical maps of normal projective varieties is a fundamental aspect of birational geometry.  Indeed, the minimal model program (MMP for short; see, for instance, \cite{KMM, K-M, BCHM, Siu}) aims to find a model with nef canonical bundle and expects that the end result is either a Mori fiber space or a minimal model.  The remarkable theorem, proved separately by Hacon-McKernan \cite{H-M}, Takayama \cite{Ta} and Tsuji \cite{Tsuji}, says that there exists a constant $r_n$ (for any integer $n>0$) such that the pluricanonical map $\varphi_m$ is birational onto its image for all $m\geq r_n$ and for all minimal projective $n$-folds of general type.  The above mentioned number $r_n$ is an important quantity related to both boundedness problem and the explicit classification theory of varieties.  However, $r_n$ is non-explicitly given in general, except when $n\leq 3$ (namely, $r_1=3$, $r_2=5$ by Bombieri \cite{Bom} and $r_3\leq 57$ by Chen-Chen \cite{EXP1,EXP2, EXP3} and the first author \cite{Delta18}).

In this paper we investigate the birational geometry of projective 3-folds of general type with the geometric genus $p_g=1$, $2$ or $3$ by studying the birationality of their pluricanonical maps.

Let $V$ be a nonsingular projective 3-fold of general type. The 3-dimensional MMP suggests that one can replace $V$ by its minimal model $X$, provided that the property we are studying is birationally invariant.  By Chen-Chen's series of works in \cite{EXP1,EXP2,EXP3}, there exists a positive number $m_0\leq 18$ such that $P_{m_0}(X)=h^0(X, m_0K_X)\geq 2$. Hence it is possible to investigate the birational geometry of $X$ by studying the behavior of the $m_0$-canonical map $\varphi_{m_0, X}$.   This strategy has proved to be quite effective.

\begin{defn} Let $W$ be a $\bQ$-factorial normal projective variety of dimension $n$. Assume that $W$ is birational to a fibration  $g:W'\lrw S$, with  $W'$ being a nonsingular projective variety and $S$ being normal projective. Let us denote by $\tau$ the birational map $W\dashrightarrow W'$, by $n=\dim X$ and $s=\dim S$. 
Then we say that the set
$$\mathcal{F}=\{\hat{F}\subset W|\hat{F}=\tau_*^{-1}(F), F\ \text{is a fiber of}\ g\}$$
{\it forms an $(n-s)$-fold class} of $W$, where $\tau_*^{-1}(\cdot)$ denotes the strict transform. 
In particular, if $n-s=1$ ($=2$) we call it a \emph{curve class} (a \emph{surface class}). 
The number $\deg_{c}(\mathcal{F})=(K_W^{n-s}\cdot \tau_*^{-1}(F))$ ($F$ being general) is called {\it the canonical degree of $\mathcal{F}$}.  
\end{defn}

%\begin{defn} Let $W$ be a $\bQ$-factorial normal projective variety of dimension $n$. Assume that the two maps $\tau:W\dashrightarrow W'$ and $g:W'\lrw S$ satisfy the following properties:
%\begin{itemize}
%\item[(1)] $W'$ is a nonsingular projective variety and $S$ is normal projective of dimension $s<n$;
%\item[(2)] $\tau$ is a birational map and $g$ is a fibration.
%\end{itemize}
%Then we say that the set
%$$\mathcal{F}=\{\hat{F}\subset W|\hat{F}=\tau_*^{-1}(F), F\ \text{is a fiber of}\ g\}$$
%{\it forms a $(n-s)$-fold class} of $W$, where $\tau_*^{-1}(\cdot)$ denotes the strict transform. %In particular, if $n-s=1$ ($=2$) we call it a \emph{curve class} (a \emph{surface class}). 
%The number $\deg_{c}(\mathcal{F})=(K_W^{n-s}\cdot \tau_*^{-1}(F))$ ($F$ being general) is called {\it the canonical degree of $\mathcal{F}$}.  
%\end{defn}

We shall use the above terminology in particular when $\varphi_{m_0,X}$ is of fiber type, i.e. $0<\dim \overline{\varphi_{m_0,X}(X)}<\dim X$. In this case $X'$ is a resolution of singularities of $X$. Moreover, we may assume that $X'$ is also a resolution of indeterminacy of $\varphi_{m_0, X}$, and $g$ is obtained by taking the Stein factorization: $ X' \stackrel{g}{\lrw} \Gamma \lrw \overline{\varphi_{m_0,X}(X)}$. In particular, we shall also say that $X$ is $m_0$-canonically fibred by a curve class  $\mathcal{C}$ (or a surface class $\mathcal{S}$).

Using the terminology just introduced, there are some relevant known results:
\begin{itemize}
\item[$\diamond$]  When $p_g(X)\geq 4$, $\varphi_{5,X}$ is birational; when $p_g(X)=3$, $\varphi_{6,X}$ is birational (see \cite[Theorem 1.2]{IJM}); when $p_g(X)=2$, $\varphi_{8,X}$ is birational (see \cite[Section 4]{IJM}).

\item[$\diamond$]  When $p_g(X)\geq 5$, $\varphi_{4,X}$ is non-birational if and only if $X$ is fibred by a genus two curve class of canonical degree 1 (see \cite[Theorem 1.3]{MZ}).

\item[$\diamond$] When $p_g(X)=4$, $\varphi_{4,X}$ is non-birational if and only if $X$ has possibly  $4$ birational structures described in \cite[Theorem 1.1]{MZ2016}.

\item[$\diamond$] When $p_g(X)=2$, $\varphi_{7,X}$ is non-birational if and only if $X$ is fibred by a genus $2$ curve class of canonical degree $\frac{2}{3}$ (see \cite[Theorem 1.1]{C14}).

\item[$\diamond$] When $p_g(X)=1$, $\varphi_{18,X}$ is birational (see \cite[Corollary 1.7]{EXP3}); when $p_g(X)=0$, $\varphi_{m,X}$ is birational for all $m\geq 57$ (see \cite[Theorem 1.6]{EXP3} and \cite[Corollary 1.2]{Delta18}).
\end{itemize}

On the other hand, the following examples give rise to, very naturally, some further questions.

\begin{exmp} \label{EXF}(\cite{F00}) Denote by $X_d$ a general weighted hypersurface in the sense of Fletcher. For instance,
\begin{itemize}

\item[(1)] $X_{12}\subset \bP(1,1,1,2,6)$ has the canonical volume $K^3=1$, the geometric genus $p_g=3$ and $\varphi_{5}$ is non-birational.

\item[(2)]  $X_{16}\subset \bP(1,1,2,3,8)$ has $K^3=\frac{1}{3}$, $p_g=2$ and $\varphi_7$ is non-birational;
 $X_{14}\subset \bP(1,1,2,2,7)$ has $K^3=\frac{1}{2}$, $p_g=2$ and $\varphi_6$ is non-birational.
 
 \item[(3)] $X_{28}\subset \bP(1,3,4,5,14)$ has $K^3=\frac{1}{30}$, 
 $p_g=1$ and $\varphi_{13}$ is non-birational.
\end{itemize}
\end{exmp}

\noindent{\bf Question A}. (see \cite[Problem 3.20]{MZ2016})  Let $X$ be a minimal projective 3-fold of general type.
\begin{itemize}
%\item[(i)] When $p_g(X)=1$, is $\varphi_{14,X}$ birational?
\item[(i)] When $p_g(X)=3$, is it possible to characterize the birationality of $\varphi_{5,X}$?
\item[(ii)] When $p_g(X)=2$, is it possible to characterize the birationality of $\varphi_{6,X}$?
\end{itemize}

The following conjecture is inspired by Example \ref{EXF}(1): 
\medskip

\noindent{\bf Conjecture B}.  Let $X$ be a minimal projective 3-fold of general type. When $p_g(X)=1$, then $\varphi_{14,X}$ is birational.
\medskip

The aim of this paper is to shed some light on the previous questions. In order to give a clear account for our main results, we need to recall the so-called ``weighted basket'' $\mathbb{B}(X)$, which is nothing but the triple $\{B_X, P_2(X),\chi(\OO_X)\}$ where $B_X$ is the Reid basket (cf. \cite{Reid87}) of terminal orbifolds of $X$.  %Actually, as being revealed in Chen-Chen \cite[Section 2, Section3]{EXP1}, $B_X$ has the canonical sequence $\{B^i\}$ and so does $\mathbb{B}(X)$, namely,
%$$\mathbb{B}^0(X)\succcurlyeq \mathbb{B}^5(X)\succcurlyeq\cdots\mathbb{B}^i(X)\succcurlyeq\cdots \succcurlyeq\mathbb{B}(X).$$
%For each $i$, the $i$-th canonical weighted basket $\mathbb{B}^i(X)$ is uniquely determined by $\mathbb{B}(X)$. Hence each $\mathbb{B}^i(X)$ is a birational invariant of $X$.

Before stating our main statements, let us fix the notation. By convention,  an ``$(l_1,l_2)$-surface''  means  a nonsingular projective surface of general type whose minimal model has the invariants: $c_1^2=l_1$ and $p_g=l_2$. Besides, we define ${\mathbb S}_1$ to be set of the following $5$ elements: 

{\tiny \begin{itemize}
\item[$\Diamond$] $B_1=\{4\times (1,2), (3,7), 3\times (2,5), (1,3)\}$,
 $K^3=\frac{2}{105}$;
 \item[$\Diamond$] $B_2=\{4\times (1,2), (5,12), 2\times (2,5), (1,3)\}$,
$K^3=\frac{1}{60}$;

\item[$\Diamond$] $B_3=\{7\times (1,2), (3,7), 2\times (1,3), (2,7)\}$,
$K^3=\frac{1}{42}$;

\item[$\Diamond$] $B_4=\{7\times (1,2), (3,7), (1,3), (3,10)\}$,
$K^3=\frac{2}{105}$;

\item[$\Diamond$]  $B_5=\{7\times (1,2), 2\times (2,5), 2\times (1,3), (1,4)\}$, $K^3=\frac{1}{60}$.
\end{itemize}}

Our first main result is the following:

\begin{thm}\label{main1} Let $X$ be a minimal projective 3-fold of general type with $p_g(X)=1$. Then
\begin{itemize}
\item[(i)]  $\varphi_{17,X}$ is birational.

\item[(ii)] $\varphi_{16,X}$ is birational unless $\chi(\OO_X)=P_2(X)=1$ and $B_X\in {\mathbb S}_1$.\end{itemize}
\end{thm}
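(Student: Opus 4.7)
The plan is to follow the Chen-Chen strategy from \cite{EXP1,EXP2,EXP3}: combine Reid's Riemann-Roch, weighted-basket packing, and pluricanonical fibration techniques to upgrade the known bound $r=18$ (from \cite[Corollary 1.7]{EXP3}) down to $17$, and then to $16$ outside the five exceptional baskets in $\mathbb{S}_1$. Throughout, one works on a good terminalization / smooth model $\pi\colon Y\to X$ and studies the movable part of $|mK_Y|$ for $m\in\{16,17\}$.

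First I would carry out the arithmetic bookkeeping. Since $p_g(X)=1$, Reid's formula
\[
\chi(\omega_X^{[m]})=\tfrac{1}{12}m(m-1)(2m-1)K_X^3-(2m-1)\chi(\OO_X)+l(m),
\]
together with Chen-Chen's packing/unpacking argument on the basket $B_X$, bounds $\chi(\OO_X)$, $P_2(X)$, and cuts down the admissible weighted baskets $\mathbb{B}(X)=\{B_X,P_2(X),\chi(\OO_X)\}$ to a finite list. The five baskets in $\mathbb{S}_1$ are precisely the minima of $K_X^3$ in the list with $\chi(\OO_X)=P_2(X)=1$, so they are the natural candidates to fail at level $m=16$.

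Next I would choose the smallest $m_0\le 18$ for which $P_{m_0}(X)\ge 2$ and split on the dimension of the image of $\varphi_{m_0,X}$. If $\varphi_{m_0}$ is generically finite, one reduces to verifying that the movable part of $|16K_Y|$ (resp.\ $|17K_Y|$) already separates two general points, using Kawamata-Viehweg vanishing and the standard "adding a divisor" trick $mK_Y\simQ M+E$ with $E$ effective; the required numerical estimate is controlled by $K_X^3$ and the listed $P_m$'s, and for every basket outside $\mathbb{S}_1$ it succeeds at $m=16$. If $\varphi_{m_0}$ is of fiber type, it exhibits $X$ as $m_0$-canonically fibred by either a surface class $\mathcal{S}$ or a curve class $\mathcal{C}$, and the general fiber is controlled by \cite[Theorem 1.2]{IJM}, \cite[Theorem 1.3]{MZ}, \cite{MZ2016} and \cite{C14}. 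One then lifts sections from the generic fiber $\hatF$ via Kollár-type vanishing and reduces to: (a) separating fibers, which requires a small positive perturbation obtainable from the $m_0$-canonical pencil; and (b) birationality of $\varphi_{m',\hatF}$ for some $m'<m$, handled by Bombieri for surface fibers (knowing $\varphi_5$ is birational on $(l_1,l_2)$-surfaces of general type) and by $|K_{\hatF}|$-separation for curve fibers.

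The hardest step will be the case analysis for the curve-class fibrations with very small canonical degree, which is where $\mathbb{S}_1$ intervenes. For the baskets with $K_X^3\in\{2/105,1/60,1/42\}$ the inequality controlling $(mK_Y-\text{round up})^2\cdot\hatF$ is just barely violated at $m=16$ and needs the extra unit of canonical class to close up, yielding only $\varphi_{17}$ birational and thereby proving (i); for all other baskets the same estimate goes through at $m=16$, giving (ii). The main obstacle is accounting for the boundary cases: one has to rule out further exceptional baskets by carefully using the inequality $\chi(\OO_X)\le P_2(X)+\text{correction}(B_X)$ from Reid's Riemann-Roch together with Miyaoka-Yau to show that no weighted basket beyond $B_1,\dots,B_5$ can realize the failure of $\varphi_{16}$, which is essentially the combinatorial heart of the proof.
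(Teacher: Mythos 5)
Your outline invokes the right general toolkit (Reid's Riemann--Roch, basket packing, fibration analysis with Kawamata--Viehweg vanishing), but it misidentifies where the exceptional set ${\mathbb S}_1$ comes from, and that is precisely where the real work of the paper lies. The five baskets do \emph{not} arise from curve-class fibrations ($d_{m_0}=2$) of small canonical degree, nor are they isolated as ``minima of $K_X^3$ among baskets with $\chi(\OO_X)=P_2(X)=1$''. The paper starts from the dichotomy (from the proof of \cite[Corollary 4.10]{EXP3}) that $p_g(X)=1$ forces either $P_4\geq 2$, or $P_4=1$ and $P_5\geq 3$. In the latter case, and in the case $P_4\geq 2$ except when $|4K_X|$ is composed of a rational pencil of $(1,2)$-surfaces, the paper proves $\varphi_{16,X}$ is birational with \emph{no} exceptions (Propositions \ref{C5}, \ref{C52}, \ref{421}), and this already requires substantial geometry: the auxiliary sub-pencil $\Lambda_5\subset|M_5|$, a split on $g(C_5)$, on $q(F)$, and Horikawa's classification of surfaces with $K^2=2p_g-4$. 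The exceptional baskets occur exactly in the remaining case: $P_4(X)=2$ and $X$ fibred by a rational pencil of $(1,2)$-surfaces, i.e.\ a \emph{surface}-class fibration over $\bP^1$ (Setting ($\aleph$-2)). There the mechanism producing ${\mathbb S}_1$ is: first bound $P_5,\dots,P_8$ from above by showing that any large plurigenus forces, via the restriction maps $\theta_{m_1,-j},\psi_{m_1,-j}$ of Definition \ref{twomaps} and Propositions \ref{X1}--\ref{X2}, an improved $\beta$, $\mu$ or $\xi$ making $\alpha(16)>2$, hence birationality by Theorem \ref{key-birat}; then feed the surviving small values of $(P_2,\dots,P_8,\chi)$ into the basket formulas of \cite[Section 3]{EXP1} and enumerate packings compatible with the geometrically derived bound $K_X^3\geq\frac{1}{60}$ (coming from $\xi\geq\frac13$ and $r_X\xi\in\bZ$). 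Your proposal contains no substitute for this two-stage argument, which is the combinatorial and geometric heart of the theorem.

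Two further points. First, your assertion that for the baskets with $K_X^3\in\{\frac{2}{105},\frac{1}{60},\frac{1}{42}\}$ the relevant inequality is ``just barely violated at $m=16$'' claims more than is known: the paper does not prove non-birationality of $\varphi_{16}$ for these cases (their very existence is open, as the Remark after Theorem \ref{thm_pg3} notes); it only shows that the birationality criteria cannot exclude them, which is why the statement reads ``unless''. Second, Miyaoka--Yau plays no role and would not suffice; the constraints actually used are the nonnegativity conditions $n^0_{*,*}\geq 0$ and $\sigma_5\geq 0$ from the canonical sequence $B^{(n)}$ of the basket, combined with parity/packing relations such as $\varepsilon_n\geq 0$, and these are what cut roughly five hundred candidate baskets down to the five elements of ${\mathbb S}_1$.
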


In the second part, we mainly study the case with $p_g(X)=3$. Our second main result is the following:

\begin{thm}\label{thm_pg3} Let $X$ be a minimal projective 3-fold of general type with $p_g(X)= 3$. Then $\varphi_{5,X}$ is not birational onto its image if and only if either
\begin{itemize}
\item[(i)] $X$ is canonically fibered with genus $2$ curve fibres, and
$K_X^3 = 1$, or
\item[(ii)]  $X$ is canonically fibered with $(1,2)$-surface fibres
of canonical degree $\frac{2}{3}$ and ${\mathbb B}(X)$ 
belongs to an explicitly described finite set $ {\mathbb S}_3$.

\end{itemize}
\end{thm}
%\begin{thm}\label{thm_pg3} Let $X$ be a minimal projective 3-fold of general type with $p_g(X)= 3$. Then there is an explicit finite set ${\mathbb S}_3$ such that one of the following statements is true:
%\begin{itemize}
%\item[(1)] $\varphi_{5,X}$ is birational onto its image;
%\item[(2)] $X$ is canonically fibered by a genus $2$ curve class and $K_X^3=1$, in which case $\varphi_{5,X}$ is non-birational;
%\item[(3)] $X$ is canonically fibered by a $(1,2)$-surface class of canonical degree $\frac{2}{3}$ and ${\mathbb B}(X)\in {\mathbb S}_3$, in which case $\varphi_{5,X}$ is non-birational.
%\end{itemize}
%\end{thm}

The idea of this paper naturally works for the case $p_g(X)=2$.  Being aware of the fact that the length of this paper would be too long to be tolerated by any journal. We would rather make the announcement here: 
\medskip

\noindent{\bf Theorem Z.} {\em Let $X$ be a minimal projective 3-fold of general type with $p_g(X)= 2$. Then $\varphi_{6,X}$ is not birational onto its image if and only if either
\begin{itemize}
\item[(i)] $X$ is canonically fibered with $(2,3)$-surfaces fibres of canonical degree $\frac{1}{2}$, or
\item[(ii)]  $X$ is canonically fibered with $(1,2)$-surface fibres and ${\mathbb B}(X)$ 
belongs to an explicitly described finite set $ {\mathbb S}_2$.
\end{itemize}}

\begin{rem} The existence of threefolds  described in Theorem \ref{thm_pg3}(i) and Theorem Z(ii) follows from Example 
\ref{EXF}. We do not know the existence of threefolds  described in Theorem \ref{main1}(ii), Theorem \ref{thm_pg3}(ii) and Theorem Z(i). A complete list of the elements of the sets ${\mathbb S}_3$  and ${\mathbb S}_2$  can be found at the following webpage.
\begin{center}
   \verb|http://www.dima.unige.it/~penegini/publ.html|
\end{center}   
\end{rem}

We briefly explain the structure of this paper.  In Section 2, we recall the established key theorem and some necessary inequalities. Section 3 contains some technical theorems which will be effectively  used for classification. Theorem \ref{main1} is proved in Section 4. Section 5 and Section 6 are devoted to proving Theorem \ref{thm_pg3}. 

In this paper we will be frequently and inevitably studying the canonical fibration $f:X'\lrw \bP^1$ of which the general fiber is a smooth $(1,2)$-surface. The two series of restriction maps $\theta_{m_1,-j}$ and $\psi_{m_1,-j}$ (see Definition \ref{twomaps}) give the decomposition of the pluri-genus, say
$P_m=\sum_{j\geq 0} u_{m,-j}$ for $2\leq m\leq 6$. The main observation of this paper is that, for each $j\geq 0$,
$\varphi_{6,X}$ (or $\varphi_{5,X}$) is birational when $u_{m,-j}$ is large enough. In other words, there are some constants $N_i>0$ ($2\leq i\leq 6$), $\varphi_{6,X}$ (resp. $\varphi_{5,X}$) is birational whenever $P_i\geq N_i$ for some $2\leq i\leq 6$.  Thus we are obliged to classify all those 3-folds of general type satisfying $P_i<N_i$ for all $2\leq i\leq 6$. Thanks to the orbifold Riemann-Roch built by Reid \cite{Reid87} and the basket theory established by Chen--Chen \cite[Section3]{EXP1}, we are able to do an effective classification.  

\section{\bf Preliminaries}

\subsection{Convention} For any linear system $|D|$ of positive dimension on a normal projective variety $Z$, we may write
$|D|=\text{Mov}|D|+\text{Fix}|D|$.   We say that {\it $|D|$ is not composed of a pencil if $\dim \overline{\Phi_{|D|}(Z)}\geq 2$}. A {\it generic irreducible element} of $|D|$ means a general member of $\text{Mov}|D|$ when $|D|$ is not composed of a pencil or, otherwise, an irreducible component in a general member of $\text{Mov}|D|$.

\subsection{Set up}\label{setup} Let $X$ be a minimal projective 3-fold of general type with $P_{m_0}(X)\geq 2$ for some integer $m_0>0$. Then the $m_0$-canonical map $\varphi_{m_0,X}: X\dashrightarrow \Sigma\subset \bP^{P_{m_0}-1}$ is a non-constant rational map, where $\Sigma=\overline{\varphi_{m_0, X}(X)}$. Fix an effective Weil divisor $K_{m_0}\sim m_0K_X$. Take successive blow-ups $\pi: X'\rightarrow X$ such that:
\begin{itemize}
\item[(i)] $X'$ is nonsingular and projective;

\item[(ii)] the moving part of $|m_0K_{X'}|$ is base point free;

\item[(iii)] the union of supports of both $\pi^*(K_{m_0})$ and exceptional divisors of $\pi$ is simple normal crossing.
\end{itemize}
Set $\tilde{g}=\varphi_{m_0}\circ\pi$ which is a morphism by assumption.
Let $X'\overset{f}\rightarrow \Gamma\overset{s}\rightarrow \Sigma$ be
the Stein factorization of $\tilde{g}$. %We get the following commutative diagram:
%\begin{picture}(50,80) \put(100,0){$X$} \put(100,60){$X'$}
%\put(170,0){$\Sigma$} \put(170,60){$\Gamma$} \put(115,65){\vector(1,0){53}}
%\put(106,55){\vector(0,-1){41}} \put(175,55){\vector(0,-1){43}}
%\put(114,58){\vector(1,-1){49}} \multiput(112,2.6)(5,0){11}{-}
%\put(162,5){\vector(1,0){4}} \put(133,70){$f$} \put(180,30){$s$}
%\put(92,30){$\pi$} \put(132,-6){$\varphi_{m_0}$}\put(136,40){$\tilde{g}$}
%\end{picture}
%\bigskip
We may write $K_{X'}=\pi^*(K_X)+E_{\pi}$ where $E_{\pi}$ is an effective ${\bQ}$-divisor which is supported on $\pi$-exceptional divisors. 
%By definition, for any positive integer $m$, we have $\roundup{m\pi^*(K_X)}\leq mK_{X'}$. 
Set $|M|=\text{Mov}|m_0K_{X'}|$.
Since $X$ has at worst terminal singularities,
we may write $m_0\pi^*(K_X)\sim_{\mathbb Q}
M+E'$ where $E'$ is an effective ${\mathbb Q}$-divisor. Set
$d_{m_0}=\dim(\Gamma).$ Clearly one has $1\leq d_{m_0}\leq 3$.

If $d_{m_0}=2$, a general fiber of $f$ is a smooth
projective curve of genus $\geq 2$. We say that $X$ is {\it $m_0$-canonically fibred by curves}.

If $d_{m_0}=1$, a general fiber $F$ of $f$ is a smooth
projective surface of general type. We say that $X$ is {\it
$m_0$-canonically fibred by surfaces} with invariants $(c_1^2(F_0), p_g(F_0)),$ where $F_0$ is the minimal model of $F$ via the contraction morphism $\sigma: F\rightarrow F_0$. We may write $M\equiv a F$ where $a=\deg f_*\OO_{X'}(M)$. %Denote $b=g(\Gamma)$.

Let $S$ be a generic irreducible element of $|M|$.
For any positive integer $m$, $|M_m|$ denotes the moving part of $|mK_{X'}|$.
Let $S_m$ be a general member of $|M_m|$ whenever $m>1$.

Set
$$\zeta(m_0)=\zeta(m_0, |M|)=\begin{cases}
1, &\text{if}\ d_{m_0}\geq 2;\\
a, & \text{if}\ d_{m_0}=1.
\end{cases}$$
Naturally one has $m_0\pi^*(K_X)\simQ \zeta(m_0) S+E'.$ We define
\begin{align}\label{muS}
\mu=\mu(S)=\mathrm{sup}\{\mu'|\ \pi^*(K_X)\ge\mu'S \}.
\end{align}
Clearly we have $\mu(S)\geq \frac{\zeta(m_0)}{m_0}$.

%For a nonsingular projective surface $S$ of general type, we say that {\it $S$ is a $(u,v)$- surface} if $K_{S_0}^2=u$ and $p_g(S_0)=v$ where $S_0$ is the minimal model of $S$.

\subsection{Known inequalities}\label{ss2} Pick a generic irreducible element $S$ of $|M|$. %Clearly, $S$ is a nonsingular projective surface of general type. 
Assume that $|G|$ is base point free on $S$. Denote by $C$ a generic irreducible element of $|G|$.
 We define
$$
\beta=\beta(m_0, |G|)=\mathrm{sup}\{\beta'| \pi^*(K_X)|_S\ge\beta' C \}.
$$
Since $\pi^*(K_X)|_S$ is nef and big, we have $\beta>0$.

Define $$\xi=\xi(m_0, |G|)=(\pi^*(K_X)\cdot C)_{X'}. $$
For any integer $m>0$, we define
\begin{eqnarray*}
\alpha(m)=\alpha(m, m_0, |G|)&=& (m-1-\frac{1}{\mu}-\frac{1}{\beta})\xi,\\
\alpha_0(m)&=&\roundup{\alpha(m)}.
\end{eqnarray*}
We will simply use the simple notation $\zeta$, $\mu$, $\beta$, $\xi$ and $\alpha(m)$ when no confusion arises in the context.
According to \cite[Theorem 2.11]{EXP2}, whenever $\alpha(m)>1$, one has
\begin{equation} m\xi\geq \deg(K_C)+\alpha_0(m).\label{kieq1}
\end{equation}
In particular, Inequality \eqref{kieq1} implies
\begin{equation}
\xi\geq \frac{\deg(K_C)}{1+\frac{1}{\mu}+\frac{1}{\beta}}. \label{kieq2}
\end{equation}

Moreover, by \cite[Inequality (2.1)]{MA}, one has
\begin{equation}\label{kcube}
K^3_X \geq \frac{\mu \beta\xi}{m_0}.
\end{equation}

\subsection{Birationality principle} We refer to \cite[2.7]{EXP2} for birationality principle.  %Recall the following concept for point separations.
%\begin{defn} Let $|L|$ be a moving (without fixed part) linear system on a normal projective variety $Z$. We say that the rational map $\Phi_{|L|}$ {\it distinguishes sub-varieties $W_1, W_2\subset Z$} if, set theoretically, $$\overline{\Phi_{|L|}(W_1)}\nsubseteqq \overline{\Phi_{|L|}(W_2)}\ \text{and}\ \overline{\Phi_{|L|}(W_2)}\nsubseteqq \overline{\Phi_{|L|}(W_1)}.$$ We say that $\Phi_{|L|}$ {\it separates points $P, Q\in Z$} (for $P, Q\not\in \text{Bs}|L|$), if $\Phi_{|L|}(P)\neq \Phi_{|L|}(Q)$.
%\end{defn}
We will tacitly and frequently use the following theorem in the context:

\begin{thm}\label{key-birat} (see \cite[Theorem 2.11]{EXP2}) Keep the same setting and assumption as in Subsection \ref{setup} and Subsection \ref{ss2}.  Pick up a generic irreducible element $S$ of $|M|$. For $m>0$, assume that the following conditions are satisfied:
\begin{itemize}
\item[(i)] $|mK_{X'}|$ distinguishes different generic irreducible elements of $|M|$;
\item[(ii)] $|mK_{X'}||_S$ distinguishes different generic irreducible elements of $|G|$;
\item[(iii)] $\alpha(m)>2$.
\end{itemize}
Then $\varphi_{m,X}$ is birational onto its image.
\end{thm}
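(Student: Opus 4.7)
The plan is to invoke the birationality principle of \cite[2.7]{EXP2}, which reduces the assertion to three separation statements along the tower $C \subset S \subset X'$: (a) $|mK_{X'}|$ distinguishes different general $S_1, S_2 \in |M|$; (b) on a general $S$, $|mK_{X'}||_S$ distinguishes different general $C_1, C_2 \in |G|$; and (c) on a general $C$, the restricted system defines a birational morphism onto its image. Hypotheses (i) and (ii) are precisely (a) and (b), so the entire content of the theorem lies in deducing (c) from $\alpha(m) > 2$.

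To attack (c), the first step is to promote the restricted maps to complete linear systems by establishing the two successive surjections
\[
H^0(X', mK_{X'}) \twoheadrightarrow H^0(S, mK_{X'}|_S) \twoheadrightarrow H^0(C, mK_{X'}|_C)
\]
through two applications of Kawamata--Viehweg vanishing. For the first arrow, the identity $\pi^*(K_X) \simQ \mu S + E'_S$ rewrites
\[
(m-1)K_{X'} - S \simQ \Bigl(m-1-\tfrac{1}{\mu}\Bigr)\pi^*(K_X) + (m-1)E_\pi + \tfrac{1}{\mu}E'_S,
\]
which splits as a nef and big $\bQ$-divisor (since $\alpha(m) > 2$ forces $m-1-1/\mu > 1/\beta > 0$) plus an effective one; the SNC normalization of Subsection \ref{setup} legitimizes the vanishing $H^1(X', mK_{X'}-S) = 0$. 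For the second arrow, restrict to $S$, apply adjunction $K_S = (K_{X'}+S)|_S$, and use $\pi^*(K_X)|_S \simQ \beta C + R$ (with $R$ effective) together with $S|_S \simQ \tfrac{1}{\mu}\pi^*(K_X)|_S - \tfrac{1}{\mu}E'_S|_S$ to rewrite
\[
mK_{X'}|_S - C - K_S \simQ \Bigl(m-1-\tfrac{1}{\mu}-\tfrac{1}{\beta}\Bigr)\pi^*(K_X)|_S + \tfrac{1}{\beta}R + \tfrac{1}{\mu}E'_S|_S + (m-1)E_\pi|_S,
\]
whose leading coefficient is again positive by $\alpha(m) > 2$, yielding the second surjection.

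It remains to show $|mK_{X'}|_C|$ is birational on $C$ itself. Since $|G|$ is base point free and $C$ is a generic irreducible element, one has $(E_\pi \cdot C) \geq 0$, so $\deg(mK_{X'}|_C) \geq m\xi$. Inequality~(\ref{kieq1}) then gives
\[
\deg(mK_{X'}|_C) \;\geq\; \deg K_C + \alpha_0(m) \;\geq\; 2g(C) + 1,
\]
because $\alpha(m) > 2$ forces $\alpha_0(m) \geq 3$. A line bundle of degree at least $2g(C)+1$ on a smooth curve is very ample, so $|mK_{X'}|_C|$ separates general points. Combining this with (a) and (b) gives birationality of $\varphi_{m,X'}$, and hence of $\varphi_{m,X}$. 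The only technical obstacle is routine: one has to verify that the fractional parts of the $\bQ$-divisors to which Kawamata--Viehweg is applied are supported on the SNC divisor built into $X'$, but this is absorbed into the resolution hypotheses of Subsection \ref{setup} and the genericity of $S$ and $C$.
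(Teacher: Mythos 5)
Your skeleton is the right one --- it is exactly the scheme of the proof of \cite[Theorem 2.11]{EXP2}, which this paper cites rather than reproves: hypotheses (i) and (ii) dispose of separating distinct $S$'s and distinct $C$'s, so everything reduces to showing that the restricted system on a generic $C$ separates points, which one obtains by pushing sections down the tower $X'\supset S\supset C$ with Kawamata--Viehweg vanishing and then applying a degree bound on $C$. The numerics at the bottom are also fine: $\alpha(m)>2$ gives $\alpha_0(m)\geq 3$, and Inequality (\ref{kieq1}) gives degree at least $2g(C)+1$.

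The gap is in both applications of vanishing. You justify $H^1(X',mK_{X'}-S)=0$ (and its analogue on $S$) by writing the relevant divisor as ``nef and big plus effective''. That is not a hypothesis under which Kawamata--Viehweg applies: one needs $K+\roundup{N}$ with $N$ a nef and big $\bQ$-divisor whose fractional part has SNC support; an effective summand such as $(m-1)E_\pi+\frac{1}{\mu}E'_S$, whose coefficients may well be $\geq 1$, cannot be absorbed into a fractional part. In fact $(m-1)K_{X'}-S$ is in general not nef (it contains $\pi$-exceptional components with positive coefficients, hence meets curves inside them negatively), and there is no reason for $H^1(X',mK_{X'}-S)$ to vanish; consequently neither surjection in your chain is established, and the very ampleness of the complete system $|mK_{X'}|_C|$ cannot be transferred to the (a priori incomplete) restricted system, which is the whole point. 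The missing idea is the round-up trick of \cite{EXP2}: apply vanishing not to $mK_{X'}$ but to the smaller divisor $K_{X'}+\roundup{(m-1)\pi^*(K_X)-\frac{1}{\mu}E'_S}$, for which $(m-1)\pi^*(K_X)-S-\frac{1}{\mu}E'_S\simQ (m-1-\frac{1}{\mu})\pi^*(K_X)$ is genuinely nef and big with SNC fractional part; since $mK_{X'}$ minus this divisor is effective and integral, one gets
$$|mK_{X'}||_S \lsgeq |K_S+\roundup{\big((m-1)\pi^*(K_X)-S-\tfrac{1}{\mu}E'_S\big)|_S}|,$$
and repeating the trick on $S$ (subtracting $C+H$, where $\frac{1}{\beta}\pi^*(K_X)|_S\equiv C+H$ with $H$ effective) produces a \emph{complete} subsystem $|K_C+D|$ of $|mK_{X'}||_C$ with $\deg D\geq \alpha_0(m)\geq 3$, hence very ample. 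Full surjectivity of your two restriction maps is neither available nor needed; what the argument delivers, and what suffices, is a complete very ample subsystem of the restricted system.
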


\subsection{Variant}\label{var} Clearly, if we replace $|m_0K_X|$ with any of its non-trivial sub-linear system $\Lambda$  while taking $|M|$ to be the moving part of $\pi^*(\Lambda)$ and keeping the same other notations as in \ref{setup} and \ref{ss2}, Inequalities (\ref{kieq1}), (\ref{kieq2}) and Theorem \ref{key-birat} still hold.

\subsection{A weak form of extension theorem} Sometimes we use the following theorem which is a special form of Kawamata's extension theorem (see \cite[Theorem A]{KawaE}):

\begin{thm}\label{KaE}  (see \cite[Theorem 2.4]{MZ2016}) Let $Z$ be a nonsingular projective variety on which $D$ is a smooth divisor such that $K_Z+D\simQ A+B$ for an ample $\bQ$-divisor $A$ and an effective $\bQ$-divisor $B$ and that $D$ is not contained in the support of $B$. Then the natural homomorphism
$$H^0(Z, m(K_Z+D))\longrightarrow H^0(D, mK_D)$$
is surjective for all $m>1$.
\end{thm}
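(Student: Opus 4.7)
The plan is to derive the surjectivity via the standard restriction exact sequence, reducing it to a Kawamata--Viehweg vanishing in its log form.

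First, I would tensor the ideal sheaf sequence $0 \to \OO_Z(-D) \to \OO_Z \to \OO_D \to 0$ with $\OO_Z(m(K_Z+D))$ and apply adjunction $(K_Z+D)|_D = K_D$ to obtain
$$0 \to \OO_Z\bigl(m(K_Z+D) - D\bigr) \to \OO_Z\bigl(m(K_Z+D)\bigr) \to \OO_D(mK_D) \to 0.$$
The long exact sequence of cohomology then reduces the desired surjectivity on $H^0$ to the vanishing $H^1\bigl(Z, \OO_Z(m(K_Z+D)-D)\bigr) = 0$.

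Second, I would massage the left term into a form adapted to log vanishing. Since
$$m(K_Z+D) - D \;=\; K_Z + (m-1)(K_Z+D) \;\simQ\; K_Z + (m-1)A + (m-1)B,$$
the task reduces to presenting $(m-1)A + (m-1)B$ as a sum of a nef-and-big $\bQ$-divisor plus a klt $\bQ$-boundary with SNC support, whereupon log Kawamata--Viehweg yields the required vanishing.

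Here the hypothesis $D \not\subset \mathrm{Supp}(B)$ becomes essential. I would pass to a log resolution $\mu : Z' \to Z$ of $(Z, B+D)$ chosen to be an isomorphism at the generic point of $D$, so that on $Z'$ the divisor $B$ has SNC support with no component equal to $D$. Then, splitting $(m-1)A = \epsilon A + (m-1-\epsilon)A$ for a small rational $\epsilon > 0$, I would separate the integer and fractional parts of $(m-1)B$: the piece $(m-1-\epsilon)A$ provides a nef-and-big cushion which absorbs the integer shift, while the fractional remainder is a klt $\bQ$-boundary supported away from $D$. Log Kawamata--Viehweg then delivers the vanishing on $Z'$, and the projection formula propagates it back down to $Z$.

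The main difficulty I anticipate lies in the bookkeeping of this last step: one must verify that after the perturbation all coefficients of the fractional part lie strictly in $[0,1)$ (so the boundary is genuinely klt), that the ample cushion remains nef-and-big after absorbing the integer shift of $(m-1)B$, and that the resolution leaves $D$ undisturbed. All three are guaranteed by taking $\epsilon$ small enough in conjunction with $D \not\subset \mathrm{Supp}(B)$; once these are in place, the argument is purely formal.
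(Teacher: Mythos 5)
Your reduction via the restriction sequence is fine, and so is the identity $m(K_Z+D)-D=K_Z+(m-1)(K_Z+D)$; the genuine gap is the claim that the ample piece can ``absorb the integer shift'' of $(m-1)B$. Kawamata--Viehweg vanishing requires the divisor added to $K_Z$ to be \emph{nef} and big modulo a fractional klt part with snc support, whereas $(m-1)(K_Z+D)\simQ (m-1)A+(m-1)B$ is only big: the candidate nef part $(m-1-\epsilon)A+\rounddown{(m-1)B}$ is an ample plus an integral effective divisor, which in general is \emph{not} nef (it can meet curves inside $\mathrm{Supp}\,\rounddown{(m-1)B}$ negatively), and no choice of $\epsilon$ helps because $\rounddown{(m-1)B}$ is a fixed divisor independent of $\epsilon$. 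Worse, the vanishing your strategy needs --- $H^1(Z,\OO_Z(K_Z+A+B))=0$ for $A$ ample and $B$ effective --- is false in general: on the blow-up $Z$ of $\bP^2$ at a point, with exceptional curve $E$ and $H$ the pull-back of a line, one computes $h^1(Z,\OO_Z(bE))=\binom{b}{2}>0$ for $b\geq 2$, although $bE-K_Z=3H+(b-1)E$ is visibly (ample $\bQ$-divisor)$+$(effective $\bQ$-divisor). The hypothesis $D\not\subset\mathrm{Supp}(B)$ cannot repair this, since the obstruction lives entirely inside $\mathrm{Supp}(B)$. If you instead invoke Nadel vanishing --- which does apply here, as $(m-1)(K_Z+D)-(m-1)B\simQ(m-1)A$ is ample --- you only obtain surjectivity onto the subspace of sections of $mK_D$ cut out by the (restricted) multiplier ideal $\mathcal{J}\big((m-1)B\big)$, not onto all of $H^0(D,mK_D)$.

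That loss --- extending only sections lying in a multiplier ideal --- is precisely why this statement is a theorem of Kawamata rather than an exercise in vanishing. Note that the paper itself offers no proof: it quotes \cite[Theorem 2.4]{MZ2016}, which is a special form of Kawamata's extension theorem \cite[Theorem A]{KawaE}. Kawamata's argument is a Siu-type iteration (the invariance-of-plurigenera technique): one lifts twisted pluricanonical sections order by order, uses $D\not\subset\mathrm{Supp}(B)$ to compare the relevant multiplier ideals on $Z$ and on $D$ along the induction, and concludes in the limit that \emph{every} section of $mK_D$ extends. A single application of the restriction sequence plus a vanishing theorem, as you propose, cannot reach that conclusion.
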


%In particular, when $Z$ is of general type and $D$ moves in a base point free linear system, the condition of Theorem \ref{KaE} is automatically satisfied.  
Take  $Z=X'$, $D=S$ and, without losing of generality, assume $\mu$ to be rational. We get 
$$|n(\mu+1)K_{X'}||_S\lsgeq |n\mu(K_{X'}+S)||_S=|n\mu K_S|$$
for some sufficiently large and divisible integer $n$. Noting that
$$n(\mu+1)\pi^*(K_X)\geq M_{n(\mu+1)}$$
and that $|n(\mu+1)\sigma^*(K_{S_0})|$ is base point free, we have
\begin{equation}
\pi^*(K_X)|_S\geq \frac{\mu}{\mu+1}\sigma^*(K_{S_0})\geq \frac{\zeta(m_0)}{m_0+\zeta(m_0)}\sigma^*(K_{S_0}).\label{cri}
\end{equation}

\subsection{Three lemmas on surfaces}  We need the following lemma in our proof.

\begin{lem}\label{ll1} (\cite[Lemma 2.6]{C14}) Let $S$ be a nonsingular projective surface. Let $L$ be a nef and big $\bQ$-divisor on $S$ satisfying the following conditions:
\begin{itemize}
\item[(1)] $L^2>8$;
 \item[(2)] $(L\cdot C_{x})\geq 4$ for all
irreducible curves $C_{x}$ passing through any very general point
$x \in S$.
\end{itemize}
Then  $|K_S+\roundup{L}|$ gives a birational map.
\end{lem}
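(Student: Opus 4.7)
The plan is to apply Nadel vanishing with multiplier ideals. For two very general points $p, q \in S$, I would try to construct an effective $\bQ$-divisor $\Delta$ on $S$ with $\Delta \simQ (1-\eps)L$ (for some small $\eps > 0$) whose multiplier ideal $\mathcal{J}(S, \Delta)$ has $p$ as an isolated point of its cosupport while $q$ also lies in the cosupport. Granted such a $\Delta$, the nefness and bigness of $L - \Delta$ yield, via Nadel vanishing,
\[
H^1\bigl(S,\, \OO_S(K_S + \roundup{L}) \otimes \mathcal{J}(S, \Delta)\bigr) = 0,
\]
so that a section of $K_S + \roundup{L}$ vanishing at $p$ but not at $q$ can be extracted from the surjection of global sections onto $\OO_S/\mathcal{J}(S,\Delta)$. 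Separation of infinitely near points at a general point is handled by the same technique applied to an infinitely near point of $p$.

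To produce $\Delta$, I would first invoke hypothesis (1). For $m$ sufficiently divisible, Riemann--Roch gives $h^0(S, mL) \geq (L^2/2)m^2 + O(m) > 4m^2 + O(m)$, while imposing $\mathrm{mult}_p \geq 2m$ and $\mathrm{mult}_q \geq 2m$ amounts to only $2\binom{2m+1}{2} = 4m^2 + O(m)$ linear conditions. Hence an effective $E \in |mL|$ with $\mathrm{mult}_p(E), \mathrm{mult}_q(E) \geq 2m$ exists, and $\Delta_0 := \tfrac{1}{m}E \simQ L$ has multiplicity $\geq 2$ at each of $p$ and $q$; the pair $(S, \Delta_0)$ is then non-klt at both points. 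Scaling by the log-canonical threshold $c \leq 1$, one then looks for the minimal log-canonical center of $(S, c\Delta_0)$ passing through $p$.

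The hard part will be the tie-breaking step that isolates $p$ from $q$ in the non-klt locus. If the minimal log-canonical center through $p$ is already zero-dimensional and avoids $q$, only a small perturbation by a multiple of an ample class is required. Otherwise the center is an irreducible curve $Z$ through $p$, and the very general choice of $p$ forces $Z$ to move in a covering family, so hypothesis (2) gives $L \cdot Z \geq 4$. A standard tie-breaking perturbation---replacing $c\Delta_0$ by $\lambda \Delta_0 + \nu \Delta_1$ for an auxiliary effective $\Delta_1 \simQ L$ with high multiplicity at $p$ but generic along $Z$---then collapses the one-dimensional component of the non-klt locus to the point $p$. The combined numerical input $L^2 > 8$ and $L \cdot Z \geq 4$ is precisely what yields enough slack in the resulting inequalities to retain a non-klt singularity at $q$; without hypothesis (2) a single log-canonical curve through both $p$ and $q$ could obstruct the separation.
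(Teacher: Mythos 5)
You should first be aware that the paper contains no proof of this lemma at all: it is quoted verbatim from \cite[Lemma 2.6]{C14} and used as a black box, so your attempt can only be measured against the standard proofs of this Reider-type statement in the citation chain. Your skeleton is the right one and matches that standard route: the Riemann--Roch count using $L^2>8$ to produce $\Delta_0\simQ (1-\eps)L$ with multiplicity $\geq 2$ at two very general points $p,q$; the dichotomy on the dimension of the minimal log-canonical center at $p$; the observation that a curve center $Z$ through a very general point must move in a covering family, so that hypothesis (2) gives $L\cdot Z\geq 4$; and Nadel vanishing to extract a separating section. (One small technical point: Nadel vanishing applies to $\mathcal{J}\bigl(\Delta+(\roundup{L}-L)\bigr)$ rather than to $\mathcal{J}(\Delta)$, since $\roundup{L}-\Delta$ need not be nef; this is harmless because $p,q$ are very general and avoid $\mathrm{Supp}(\roundup{L}-L)$, but it should be said.)

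The genuine gap is exactly at the step you label ``the hard part,'' and it is not a matter of routine bookkeeping. First, after scaling by the log-canonical threshold $c$ at $p$, nothing guarantees the pair is still non-klt at $q$: you must either order the two thresholds and argue symmetrically (scale by the larger one), or exploit the strictness of $L^2>8$ to impose multiplicity $2+\delta$ at $q$ so that non-kltness at $q$ survives small rescalings; as written, your construction can simply lose $q$ from the cosupport. Second, and more seriously, in the curve-center case the ``collapse $Z$ to the point $p$'' perturbation has a budget obstruction: cutting the center down requires an auxiliary $D\simQ\nu L$ with $\mathrm{mult}_p(D|_Z)>1$, hence $\nu$ slightly larger than $1/(L\cdot Z)$, which is about $1/4$ in the worst case $L\cdot Z=4$; if the threshold $c$ at $p$ is close to $1$, then $c(1-\eps)+\nu>1$, the total $\bQ$-class exceeds $L$, and Nadel vanishing no longer applies --- moreover one cannot compensate by rescaling $Z+D$, because the allowable rescaling is controlled by the (uncontrolled) tangency order of $D$ along $Z$ at $p$. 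This is precisely why the published arguments do not collapse the one-dimensional center at all: in that case they restrict the adjoint system to $Z$ and separate $p$ from other points directly on $Z$, using $\deg(L|_Z)\geq 4$ and adjunction (which also disposes of the case where the log-canonical curve passes through both $p$ and $q$). Until this case is handled by such a restriction argument, or the coefficient accounting is genuinely carried out, the proposal remains an outline of the correct strategy rather than a proof.
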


\begin{lem}\label{ll2}(\cite[Lemma 2.4]{EXP3}) Let $\sigma:S\longrightarrow S_0$ be a birational contraction from a nonsingular projective surface $S$ of general type onto its minimal model $S_0$. Assume that $S$ is not a $(1,2)$-surface and that $C$ is a moving  curve on $S$. Then  $(\sigma^*(K_{S_0})\cdot C)\geq 2$.
\end{lem}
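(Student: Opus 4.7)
My plan is to reduce the inequality to one on the minimal model $S_{0}$ and then run a case analysis driven by the adjunction formula together with the Hodge index theorem. By the projection formula, $\sigma^{*}(K_{S_{0}})\cdot C = K_{S_{0}}\cdot C_{0}$, where $C_{0}:=\sigma_{*}C$. Since $C$ moves on $S$, it cannot be $\sigma$-exceptional, so $C_{0}$ is a non-zero irreducible curve that still moves on $S_{0}$; in particular $C_{0}^{2}\geq 0$ and $C_{0}$ is not a $(-2)$-curve. Because $K_{S_{0}}$ is nef and big and pairs to zero only against the $(-2)$-curves contracted to the canonical singularities of $S_{0}$, we get at once $K_{S_{0}}\cdot C_{0}\geq 1$.

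Assume for contradiction that $K_{S_{0}}\cdot C_{0}=1$. The adjunction formula $(K_{S_{0}}+C_{0})\cdot C_{0}=2p_{a}(C_{0})-2$ forces the left side to be even, so $C_{0}^{2}$ is odd; combined with $C_{0}^{2}\geq 0$ this gives $C_{0}^{2}\geq 1$. The Hodge index theorem then yields $K_{S_{0}}^{2}\cdot C_{0}^{2}\leq (K_{S_{0}}\cdot C_{0})^{2}=1$, and since $K_{S_{0}}^{2}\geq 1$ we conclude $K_{S_{0}}^{2}=C_{0}^{2}=1$ and $K_{S_{0}}\equiv C_{0}$ numerically. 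Adjunction gives $p_{a}(C_{0})=2$, and Noether's inequality forces $p_{g}(S_{0})\leq 2$.

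It remains to prove that such an $S_{0}$ must be a $(1,2)$-surface. Since $C_{0}^{2}=1$ and $|C_{0}|$ is at least a pencil, it has a single simple base point $p$; blowing it up produces a free pencil $|\widetilde{C}_{0}|$ on $\widetilde{S}$ whose general member is a smooth curve of genus $2$, and one checks $K_{\widetilde{S}}\equiv \widetilde{C}_{0}+2E$. This exhibits a minimal surface with $K^{2}=1$ fibred in genus $2$ curves, and together with $K_{S_{0}}\equiv C_{0}$ and $h^{0}(C_{0})\geq 2$ this forces $p_{g}(S_{0})=2$ (one way is to note that if $K_{S_{0}}\sim C_{0}$, then $p_{g}=h^{0}(K_{S_{0}})=h^{0}(C_{0})\geq 2$ and we are done). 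The resulting $S_{0}$ is then a $(1,2)$-surface, contradicting the hypothesis on $S$.

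The main obstacle I expect is the very last step, specifically the residual case in which $C_{0}-K_{S_{0}}$ is a non-trivial torsion class in $\Pic(S_{0})$, so that the linear equivalence $C_{0}\sim K_{S_{0}}$ is not automatic. In this situation one cannot directly read $p_{g}\geq 2$ off $h^{0}(C_{0})\geq 2$; instead, one must exclude the torsion twists either by a direct analysis of the relative canonical algebra of the genus $2$ fibration on $\widetilde{S}$, or by invoking the classical fact (due to Miyaoka/Reid for surfaces with $K^{2}=1$ and $p_{g}\leq 1$) that $h^{0}(K_{S_{0}}+\tau)\leq 1$ for every torsion class $\tau$ on a numerical Godeaux-type surface. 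This is the only place where one leaves the realm of purely numerical inequalities.
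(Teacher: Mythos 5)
The paper itself contains no proof of this lemma: it is quoted verbatim from Chen--Chen \cite[Lemma 2.4]{EXP3}, so the only possible comparison is with that cited source, and your numerical skeleton is exactly the natural one. That part of your argument is correct: projection formula, $(K_{S_0}\cdot C_0)\geq 1$ because a moving curve cannot be a $(-2)$-curve, parity of $C_0^2$ via adjunction, the Hodge index theorem forcing $K_{S_0}^2=C_0^2=(K_{S_0}\cdot C_0)=1$ and $K_{S_0}\equiv C_0$, then $p_a(C_0)=2$ and Noether's inequality giving $p_g(S_0)\leq 2$. One point you elide: ``moving'' must be understood in the algebraic sense --- in this very paper the lemma is applied, in Subcase (2-ii) of the proof of Proposition \ref{C52}, to a general fibre of an Albanese fibration over an elliptic curve, which moves in no linear system --- so your step ``$|C_0|$ is at least a pencil'' needs the extra observation that $K_{S_0}^2=1$ forces $q(S_0)=0$ (Debarre's inequality \cite{D} together with $\chi(\OO_{S_0})>0$), after which algebraic and linear equivalence coincide and $h^0(C_0)\geq 2$ does follow. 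That is easily repaired.

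The genuine gap is in the residual torsion case, and your proposed patch fails for half of it. Write $C_0=K_{S_0}+\tau$ with $\tau$ torsion; Noether leaves $p_g(S_0)\in\{0,1,2\}$, and $p_g=2$ is the desired contradiction. For $p_g=0$ (numerical Godeaux) the fact you invoke, $h^0(K_{S_0}+\tau)\leq 1$ for every torsion class, is indeed the Miyaoka--Reid theorem and settles that case. But for $p_g=1$, i.e.\ $\chi(\OO_{S_0})=2$, that inequality is not something one could ever prove: Riemann--Roch together with $h^2(K_{S_0}+\tau)=h^0(-\tau)=0$ gives
$$h^0(K_{S_0}+\tau)=\chi(\OO_{S_0})+h^1(K_{S_0}+\tau)\geq 2$$
for \emph{every nontrivial} torsion class $\tau$. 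So in the case $(K_{S_0}^2,p_g)=(1,1)$ the statement you want to quote is equivalent to the assertion $\mathrm{Tors}(\Pic(S_0))=0$ for minimal surfaces with $K^2=p_g=1$ --- a true but different classical theorem, not due to Miyaoka/Reid and not provable by their Godeaux-type vanishing (one route: an order-$n$ torsion class yields an \'etale cover with $K^2=n$ and $\chi=2n$, violating Noether's inequality for $n\geq 3$, while for $n=2$ the cover lies on the Noether line $K^2=2\chi-6$ and Horikawa's description \cite{H1} of such surfaces excludes a free involution). Alternatively, your suggested genus-$2$-fibration analysis can be completed: restricting $\tau=C_0-K_{S_0}$ to a general member gives $\tau|_{C_0}\sim 2x-K_{C_0}=x-\iota(x)$, where $x$ is the base point and $\iota$ the hyperelliptic involution, and such a class is torsion only if it vanishes; this splits the associated \'etale cover over the pencil and produces on the cover a moving curve violating the Hodge index theorem. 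As written, however, neither argument is supplied, so the case $p_g(S_0)=1$ with nontrivial torsion --- precisely the case where a paracanonical pencil of genus-$2$ curves could a priori exist --- is not excluded, and your proof is incomplete at exactly that point.
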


\begin{lem} \label{ll3}(\cite[Lemma 2.5]{EXP3}) Let $\sigma:S\longrightarrow S_0$ be the birational contraction onto the minimal model $S_0$ from a nonsingular projective surface $S$ of general type. Assume that $S$ is not a $(1,2)$-surface and that $C$ is a curve passing through very general points of $S$. Then one has $(\sigma^*(K_{S_0})\cdot C)\geq 2.$
\end{lem}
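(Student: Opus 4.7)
The plan is to push $C$ down to the minimal model and argue directly there. Set $C_0 := \sigma_\ast(C)$. Since $C$ passes through very general points of $S$, it cannot lie inside the (proper) $\sigma$-exceptional locus, so $C_0$ is a non-zero curve on $S_0$ whose algebraic deformations still sweep out $S_0$. The projection formula gives $\sigma^\ast(K_{S_0})\cdot C = K_{S_0}\cdot C_0$, so it is enough to prove $K_{S_0}\cdot C_0 \geq 2$ on the minimal model.

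I would first establish the weak bound $K_{S_0}\cdot C_0 \geq 1$. Since $K_{S_0}$ is nef and big (by minimality of $S_0$) and $C_0$ is a covering curve, both $K_{S_0}\cdot C_0$ and $C_0^2$ are non-negative, and the Hodge index theorem rules out the equality $K_{S_0}\cdot C_0 = 0$ --- otherwise $C_0$ would have to be numerically trivial, which is impossible for a non-zero effective divisor. I then suppose, for contradiction, that $K_{S_0}\cdot C_0 = 1$. Adjunction $2p_a(C_0) - 2 = C_0^2 + 1$ forces $C_0^2$ to be a non-negative odd integer, while Hodge index $(K_{S_0}\cdot C_0)^2 \geq K_{S_0}^2 \cdot C_0^2$ forces $K_{S_0}^2 \cdot C_0^2 \leq 1$. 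Together these pin down $K_{S_0}^2 = C_0^2 = 1$, $p_a(C_0) = 2$, and (by the equality case of Hodge index) $C_0 \equiv K_{S_0}$ numerically.

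The remaining step, which I expect to be the main obstacle, is to deduce from these numerical data that $S_0$ is a $(1,2)$-surface (that is, $p_g(S_0) = 2$), contradicting the hypothesis. Noether's inequality $K_{S_0}^2 \geq 2p_g(S_0) - 4$ already gives $p_g(S_0) \leq 2$, so the essential content is the lower bound $p_g(S_0) \geq 2$. My plan for this is to analyse the algebraic family $\{C_t\}$ of deformations of $C_0$: two distinct general members satisfy $C_t \cdot C_{t'} = K_{S_0}^2 = 1$, so they meet transversally at a single point; consequently the family is one-dimensional and has a unique base point. Examining the induced map from the Hilbert-scheme component to $\mathrm{Pic}(S_0)$, whose image lies in $\omega_{S_0}\otimes \mathrm{Pic}^{\tau}(S_0)$, I would conclude that the family is in fact a linear pencil, yielding $h^0(\mathcal{O}_{S_0}(C_0))\geq 2$. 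A short argument with $\mathrm{Pic}^{\tau}(S_0)$, invoking the well-understood structure of minimal surfaces of general type with $K^2=1$, then identifies $\mathcal{O}_{S_0}(C_0)$ with $\omega_{S_0}$ up to absorbable torsion and upgrades the inequality to $p_g(S_0)\geq 2$. The careful handling of this last Picard-scheme analysis is the subtle part of the proof; everything else reduces to a routine application of Hodge index, adjunction, and Noether's inequality.
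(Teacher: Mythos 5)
The paper itself contains no proof of this lemma: it is quoted verbatim from Chen--Chen \cite[Lemma 2.5]{EXP3}, so your proposal can only be measured against the argument behind that citation, whose skeleton yours indeed reproduces. The reduction you set up is correct and is the standard one: push $C$ down to $C_0=\sigma_*(C)$, use the projection formula, note that a curve through a very general point moves in a covering family (countability of the components of the Hilbert scheme), so $C_0^2\geq 0$ and $K_{S_0}\cdot C_0\geq 1$; then, assuming $K_{S_0}\cdot C_0=1$, adjunction parity plus the Hodge index theorem force $K_{S_0}^2=C_0^2=1$, $p_a(C_0)=2$, $C_0\equiv K_{S_0}$, and Noether's inequality leaves $p_g(S_0)\leq 2$. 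Two points are left implicit but are harmless: $q(S_0)=0$ (needed to turn the covering family into a pencil; it follows from Debarre's inequality $K^2\geq 2p_g$ for irregular minimal surfaces of general type), and the constancy of $\OO_{S_0}(C_t)$ along the connected parameter space, which yields $h^0(K_{S_0}+\eta)\geq 2$ for a \emph{single} class $\eta\in\Pic^{\tau}(S_0)$, necessarily torsion since $q=0$.

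The genuine gap is your last step: torsion is not ``absorbable''. If $\eta=0$ (or if $p_g(S_0)=2$) you are done, but if $\eta\neq 0$ then $h^0(K_{S_0}+\eta)\geq 2$ gives no lower bound on $p_g(S_0)$ whatsoever, and eliminating the remaining cases $p_g(S_0)\in\{0,1\}$ is not a ``short Picard-scheme argument''; it is exactly where surface classification theory must enter. Concretely: (a) if $p_g(S_0)=0$, then $S_0$ is a numerical Godeaux surface with $P_2(S_0)=\chi(\OO_{S_0})+K_{S_0}^2=2$, and one needs Miyaoka's theorem that $h^0(K_{S_0}+\eta)\leq 1$ for every torsion class; alternatively, the multiplication map gives $h^0(2K_{S_0}+2\eta)\geq 2h^0(K_{S_0}+\eta)-1\geq 3$, while Kawamata--Viehweg vanishing and Riemann--Roch give $h^0(2K_{S_0}+2\eta)=\chi(\OO_{S_0})+K_{S_0}^2=2$, a contradiction. (b) If $p_g(S_0)=1$, Noether's inequality applied to the cyclic \'etale cover defined by $\eta$ shows $\eta$ has order $2$; here the counting argument above only yields equality ($h^0(2K_{S_0})=3=P_2(S_0)$), i.e.\ $|2K_{S_0}|$ would be composed of the genus-two pencil, and one must invoke either Xiao's theorem that the bicanonical map of a minimal surface of general type with $P_2\geq 3$ is generically finite, or the triviality of the torsion group of surfaces with $K^2=p_g=1$ (Catanese; equivalently, the \'etale double cover would be a Horikawa surface with $K^2=2$, $p_g=3$, $q=0$ carrying a free involution, which Horikawa's structure theorem---$Y$ is the canonical double cover of $\bP^2$ branched along an octic---rules out by a fixed-point analysis). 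Until (a) and (b), or citations replacing them, are supplied, your proof does not close; with them supplied it does, and it then amounts to the argument underlying the cited lemma.
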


\subsection{The weighted basket of $X$}\label{basket}

The {\it weighted basket}  $\mathbb{B}(X)$ is defined to be the triple
$\{B_X, P_2(X), \chi(\OO_X)\}$.
We keep all the definitions and symbols in \cite[Sections 2 and 3]{EXP1} such as ``basket'', ``prime packing'',  ``the canonical sequence of a basket'', $\Delta^j(B)$ ($j>0$),  $\sigma$, $\sigma'$, $B^{(n)}$ ($n\geq 0$), $\chi_m(\mathbb{B}(X))$ ($m\geq 2$), $K^3(\mathbb{B}(X))$,  $\sigma_5$, $\varepsilon$, $\varepsilon_n$ ($n\geq 5$) and so on.

As $X$ is of general type, the vanishing theorem and Reid's Riemann-Roch formula \cite{Reid87} (see also front lines in \cite[4.5]{EXP1}) imply that
$$\chi_m(\mathbb{B}(X))=P_m(X)$$
for all $m\geq 2$ and $K^3(\mathbb{B}(X))=K_X^3$.  For any $n\geq 0$, $B^{(n)}$ can be expressed in terms of $\chi(\OO_X)$, $P_2$, $P_3$, $\cdots$, $P_{n+1}$ (see \cite[(3.3)$\sim$(3.14)]{EXP1} for more details), which serves as a considerably powerful tool for our classification.

%%%%%%
%%%%%%%%%
\section{\bf Some technical theorems}
%%%%%
%%%%%%

\subsection{Two lemmas on distinguishing properties}
\begin{lem}\label{s1} Let $X$ be a minimal projective 3-fold of general type with $p_g(X)>0$ and $P_{m_0}\geq 2$. Keep the setting in \ref{setup}. Then the linear system $|mK_{X'}|$ distinguishes different generic irreducible elements of $|M_{m_0}|$ whenever $m\geq m_0+2$.
\end{lem}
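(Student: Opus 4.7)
The plan is to construct, for any two distinct generic irreducible elements $S_1, S_2$ of $|M_{m_0}|$, a divisor $D \in |m K_{X'}|$ with $S_1 \subset \mathrm{Supp}(D)$ and $S_2 \not\subset \mathrm{Supp}(D)$. First I would exploit $p_g(X) > 0$: since $X$ has terminal singularities, $p_g(X') = p_g(X) > 0$, so I can fix an effective $K_0 \in |K_{X'}|$. Writing $|m_0 K_{X'}| = |M_{m_0}| + Z$ with $Z$ the fixed part, the assignment $M \mapsto M + Z + (m - m_0) K_0$ realizes $|M_{m_0}|$ as a sub-linear system $\Lambda \subseteq |m K_{X'}|$. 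This $\Lambda$ distinguishes members of $|M_{m_0}|$ tautologically, and our task reduces to matching these members against generic irreducible elements.

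When $d_{m_0} \geq 2$, a generic irreducible element is itself a general (irreducible) member $S$ of $|M_{m_0}|$. For distinct general $S_1, S_2$, the divisor $D = S_1 + Z + (m - m_0) K_0 \in \Lambda$ contains $S_1$ and does not contain $S_2$: the irreducible $S_2$ is not equal to $S_1$, not a component of the fixed $Z$ (since $S_2$ moves in $|M_{m_0}|$), and not a component of the fixed $K_0$ (by genericity). When $d_{m_0} = 1$, a generic irreducible element is a general fiber $F$ of $f \colon X' \to \Gamma$. For two generic fibers $F_1, F_2$ over $p_1, p_2 \in \Gamma$, their images $\tilde{g}(F_i) = s(p_i) \in \Sigma$ are distinct --- the locus $\{(p, q) \in \Gamma \times \Gamma : s(p) = s(q)\}$ is a proper closed subset of $\Gamma \times \Gamma$, since $s$ is finite, which generic pairs avoid. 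Using the non-degeneracy of $\Sigma \subset \mathbb{P}^{P_{m_0} - 1}$, I pick a hyperplane $H$ with $s(p_1) \in H$ and $s(p_2) \notin H$; then $\tilde{g}^{*}(H) = M_H + Z$ with $M_H \in |M_{m_0}|$ containing $F_1$ but not $F_2$, and $D = M_H + Z + (m - m_0) K_0 \in |m K_{X'}|$ works as before.

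The main point requiring care is verifying that $Z$ and $K_0$ do not contain the generic irreducible element we wish to exclude from $\mathrm{Supp}(D)$. This is routine: a fixed effective divisor has only finitely many components, while the generic elements range over infinite families. With the hypothesis $m \geq m_0 + 2$, the auxiliary term $(m - m_0) K_0$ is a nontrivial multiple of the canonical section, providing ample room for the sub-linear-system construction to yield the required separation.
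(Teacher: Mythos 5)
Your construction works where $|M_{m_0}|$ already distinguishes its own generic irreducible elements: when $d_{m_0}\geq 2$, and also when $d_{m_0}=1$ with $\Gamma\cong\bP^1$ (there $|M_{m_0}|$ is the \emph{complete} system $f^*|\OO_{\bP^1}(a)|$, so $s$ is the degree-$a$ embedding of $\bP^1$ and your hyperplane trick separates \emph{every} pair of distinct general fibers). But it has a genuine gap in exactly the case the paper's proof is designed for: $|M_{m_0}|$ composed of an \emph{irrational} pencil. The problem is your quantifier. For the birationality principle (Theorem \ref{key-birat}, i.e.\ \cite[2.7, Theorem 2.11]{EXP2}) the lemma must separate \emph{any} two distinct members of the generic family, not merely a \emph{generic pair} $(p,q)\in\Gamma\times\Gamma$. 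Your reduction to ``the locus $\{s(p)=s(q)\}$ is a proper closed subset'' silently proves only the generic-pair statement, which is too weak: a degree-two map identifying $F_p$ with $F_{\iota(p)}$ for an involution $\iota$ of $\Gamma$ separates generic pairs of fibers and can be birational on every single fiber, yet is not birational. That is precisely the degeneration the principle must rule out.

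In the irrational pencil case the strong statement genuinely fails for your sub-system $\Lambda=|M_{m_0}|+Z+(m-m_0)K_0$. Here $|M_{m_0}|=f^*|D|$ for a complete system $|D|$ on a curve $\Gamma$ of genus $\geq 1$, and the finite part $s=\Phi_{|D|}$ of the Stein factorization can have degree $\geq 2$ (e.g.\ $\deg D=2$, $h^0(D)=2$ on an elliptic or genus-$2$ curve). Then a generic fiber $F_p$ has a partner $F_q$, $q\neq p$, with $s(q)=s(p)$, and both are perfectly generic irreducible elements; any member of $\Lambda$ containing $F_p$ is of the form $\tilde g^*(H)+Z+(m-m_0)K_0$ with $s(p)\in H$, hence contains $F_q$ as well. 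So no choice of hyperplane can ever separate this pair inside $\Lambda$: one needs sections of $|mK_{X'}|$ that do \emph{not} come from the pulled-back sub-system. This is exactly what the paper's proof produces: Kawamata--Viehweg vanishing applied to $K_{X'}+\roundup{(m-m_0-1)\pi^*(K_X)}+M_{m_0}-S_1-S_2$ yields a surjection onto $H^0(S_1,\cdot)\oplus H^0(S_2,\cdot)$, and $p_g(X)>0$ guarantees both summands are nonzero, so one can prescribe a section vanishing on $S_1$ but not on $S_2$. Note the tell-tale sign: your argument never really uses $m\geq m_0+2$ (it would ``prove'' the statement already for $m=m_0$, which is false in this case), whereas the paper needs that hypothesis precisely so that $(m-m_0-1)\pi^*(K_X)$ is nef and big in the vanishing step.
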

\begin{proof}  Since $mK_{X'}\geq M_{m_0}$, by the Matsuki-Tankeev birationality principle (see, e.g. \cite[2.1]{MPCPS}), 
	%We first consider the case when $|M_{m_0}|$ is not composed of an irrational pencil. Since $mK_{X'}\geq M_{m_0}$, it is sufficient to prove that the linear system $|M_{m_0}|$ distinguishes different generic irreducible elements of $|M_{m_0}|$. Let $S_1$ and $S_2$ be two different generic irreducible elements of $|M_{m_0}|$. If $|M_{m_0}|$ is not composed of a pencil, we have 
%	$$
%	S_1=\Phi_{|M_{m_0}|}^*H_1,\ S_2=\Phi_{|M_{m_0}|}^*H_2,
%	$$
%	where $H_1$ and $H_2$ are two different general hyperplane sections of $\Sigma$. It is clear that $|M_{m_0}|$ distinguishes $S_1$ and $S_2$. If $|M_{m_0}|$ is composed of a rational pencil, then we have $\Gamma\cong \mathbb{P}^1$. In particular, $S_1$ and $S_2$ are two different general fibers of the fibration $f: X'\to \mathbb{P}^1$. Moreover, we have $M_{m_0}=f^*\mathcal{O}_{\mathbb{P}^1}(a)$, where $a\ge 1$. We conclude that $|M_{m_0}|$ distinguishes $S_1$ and $S_2$.
it is sufficient to treat the case when $|M_{m_0}|$ is composed of a pencil.  Indeed, if $\Gamma\cong \bP^1$, global sections of $f_*\OO_{X'}(M_{m_0})$ (as a line bundle) distinguishes different points of $\Gamma$. Hence $|M_{m_0}|$ distinguishes different smooth fibers of $f$, so does  $|mK_{X'}|$.  {}From now on, we assume that $|M_{m_0}|$ is composed 
of an irrational pencil. Pick two generic irreducible elements $S_1$ and $S_2$.  The vanishing theorem (\cite{KaV,VV}) gives the surjective map:
\begin{eqnarray}
&&H^0(X', K_{X'}+\roundup{(m-m_0-1)\pi^*(K_X)}+M_{m_0}) \notag\\
&\lrw& H^0(S_1, \big(K_{X'}+\roundup{(m-m_0-1)\pi^*(K_X)}+M_{m_0}\big)|_{S_1})\label{ee1}\\
&&\oplus  H^0(S_2, \big(K_{X'}+\roundup{(m-m_0-1)\pi^*(K_X)}+M_{m_0}\big)|_{S_2})\label{ee2}
\end{eqnarray}
Both groups in (\ref{ee1}) and (\ref{ee2}) are non-zero as $S_i$ is moving and $M_{m_0}|_{S_i}\sim 0$. \end{proof}

\begin{lem}\label{s2}  Let $X$ be a minimal projective 3-fold of general type with $p_g(X)>0$ and $P_{m_0}(X)\geq 2$.  Keep the setting and notation in \ref{setup} and \ref{ss2}.  Then $|mK_{X'}||_S$ distinguishes different generic irreducible elements of $|G|$ under one of the following conditions:
\begin{itemize}
\item[(1)] $m>\frac{1}{\mu}+\frac{2}{\beta}+1$.
\item[(2)] $m>\frac{m_0}{\zeta}+m_1+1$ where the positive integer $m_1$ satisfies $M_{m_1}|_S\geq G$.
\end{itemize}\end{lem}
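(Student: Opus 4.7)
The plan is a two-step Kawamata-Viehweg vanishing argument in the style of Lemma \ref{s1}: first lift sections from $S$ to $X'$, then on $S$ produce a sub-linear system of $|mK_{X'}|\big|_S$ of the form $|G|+F$ with $F$ a fixed effective divisor. Because $|G|$ is base-point-free, two distinct generic members $C_1,C_2\in|G|$ are disjoint and not contained in $F$, so the divisors $C_1+F,\,C_2+F\in|mK_{X'}|\big|_S$ already separate them.

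For case (1), the relation $\pi^*(K_X)\simQ \mu S+E_S'$ combined with $K_{X'}=\pi^*(K_X)+E_\pi$ lets me rewrite
$$mK_{X'}-S-K_{X'}\;\simQ\;\bigl(m-1-\tfrac{1}{\mu}\bigr)\pi^*(K_X)+(m-1)E_\pi+\tfrac{1}{\mu}E_S',$$
which is (nef and big) + (effective) once $m>1+\tfrac{1}{\mu}$. Kawamata-Viehweg vanishing, legitimate after rounding up the fractional parts thanks to the SNC hypothesis in \ref{setup}, lifts sections of $mK_{X'}$ surjectively onto $S$. On $S$, adjunction writes $mK_{X'}|_S\simQ K_S+R$ with $R$ containing $\bigl(m-1-\tfrac{1}{\mu}\bigr)\pi^*(K_X)|_S$ as its nef-big summand. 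Using $\pi^*(K_X)|_S\geq\beta C$, the stronger bound $\bigl(m-1-\tfrac{1}{\mu}\bigr)\beta>2$ ensures that $R-G$ still admits a nef-big-plus-effective decomposition for a generic $G\in|G|$, so a second round of Kawamata-Viehweg on $S$ gives $H^0(S,K_S+\roundup{R-G})\neq 0$, providing the required $F$.

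For case (2), the same scheme applies with two substitutions: use $m_0\pi^*(K_X)\simQ \zeta S+E'$ (hence the cruder bound $S\leq \tfrac{m_0}{\zeta}\pi^*(K_X)$ up to an effective $\bQ$-divisor) in place of the $\mu$-relation, and use the hypothesis $M_{m_1}|_S\geq G$ in place of the $\beta$-bound. The analogous computation
$$(m-m_1)K_{X'}-S-K_{X'}\;\simQ\;\bigl(m-m_1-1-\tfrac{m_0}{\zeta}\bigr)\pi^*(K_X)+(\text{effective})$$
is nef-big plus effective under $m>m_1+1+\tfrac{m_0}{\zeta}$, so the Kawamata-Viehweg lift yields $|(m-m_1)K_{X'}|\big|_S\neq\emptyset$; fix any member $F'$ there. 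Then the inclusion $|mK_{X'}|\supseteq|(m-m_1)K_{X'}|+|M_{m_1}|$, restricted to $S$ and combined with the fact that $M_{m_1}|_S\geq G$ forces $|M_{m_1}|\big|_S\supseteq|G|+F_1$ for a fixed effective $F_1$, yields $|mK_{X'}|\big|_S\supseteq|G|+(F'+F_1)$.

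The principal technical obstacle is the bookkeeping of fractional and $\pi$-exceptional parts of the $\bQ$-divisors so that after round-up one has genuinely integral nef-big divisors for Kawamata-Viehweg vanishing; the simple-normal-crossing condition in \ref{setup} handles this. A subtler point in case (1) is that the nef-big structure must survive the second subtraction of $G$ on $S$, which is the reason the numerical slack is $\tfrac{2}{\beta}$ rather than the naive $\tfrac{1}{\beta}$.
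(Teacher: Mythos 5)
Your overall two-step vanishing scheme matches the paper's starting point, but the separation mechanism at the end is where the proof genuinely breaks. You conclude by exhibiting a subsystem $|G|+F\subseteq |mK_{X'}||_S$ with $F$ fixed, and arguing that base-point-freeness of $|G|$ finishes the job. This conflates \emph{generic irreducible elements} of $|G|$ with \emph{general members} of $|G|$: by the paper's convention (Subsection 2.2) these coincide only when $|G|$ is not composed of a pencil, and the pencil case is precisely the crux of the lemma. In the \emph{irrational} pencil case the generic irreducible elements $C, C'$ are fibres of the Stein factorization $S\to B\to\Sigma$ of $\Phi_{|G|}$ with $g(B)\geq 1$; they are numerically but in general not linearly equivalent, and whenever the finite part $B\to\Sigma$ has degree $\geq 2$ there are pairs of individually general fibres $C,C'$ such that \emph{every} member of $|G|$ containing $C$ automatically contains $C'$ (members of $|G|$ are pullbacks of hyperplane sections of $\Sigma$). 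No subsystem of the form $|G|+F$ can separate such a pair, and since $h^0(S,C)=1$ here one cannot pass to the complete pencil $|C|$ either; yet separating these pairs is exactly what Theorem \ref{key-birat} requires. The paper's proof handles this by a different device: it subtracts \emph{both} curves, writing $\frac{1}{\beta}\pi^*(K_X)|_S\equiv C+H$ and applying Kawamata--Viehweg vanishing to obtain a surjection
$$H^0\big(S, K_S+\roundup{\cdots -C-C'-2H}+C+C'\big)\lrw H^0(C, K_C+D_m)\oplus H^0(C', K_{C'}+D'_m)$$
with both summands nonzero. This two-curve surjection is also the true origin of the $\frac{2}{\beta}$ slack (one $\frac{1}{\beta}$ for each of $C$ and $C'$), not, as you suggest, the need to preserve nef-and-bigness after removing one member $G$: in the pencil case a member $G\equiv tC$ would cost $\frac{t}{\beta}$ with $t$ unbounded, so your numerical bookkeeping fails there as well.

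In case (2) there is a second gap: from $M_{m_1}|_S\geq G$ you infer $|M_{m_1}||_S\supseteq |G|+F_1$. This is a non sequitur, because $|M_{m_1}||_S$ is the \emph{image} of the restriction map $H^0(X',M_{m_1})\to H^0(S,M_{m_1}|_S)$, and knowing that the restricted divisor class dominates a member of $|G|$ does not place the whole translated system $|G|+F_1$ inside that image. The paper avoids this by only ever using linear systems on $S$ that are complete and are shown, via vanishing, to lie inside $|mK_{X'}||_S$, namely $|K_S+\roundup{((m-m_1-1)\pi^*(K_X)-S-\frac{1}{\zeta}E')|_S}+G|$; linear equivalence then disposes of the non-pencil and rational-pencil cases, and the irrational-pencil case is again settled by the two-curve surjection. (The paper also treats the $\zeta>1$ situation separately through the horizontal part of $E'$, a point your sketch skips.) A further small slip, harmless but symptomatic: base-point-freeness does not make two distinct general members of $|G|$ disjoint --- they meet in $G^2$ points when $|G|$ is not composed of a pencil.
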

\begin{proof}  Without loss of generality, we may and do assume that $\mu$ is rational.

(1).  As $(m-1)\pi^*(K_X)-S-\frac{1}{\mu}E_S'\equiv (m-\frac{1}{\mu}-1)\pi^*(K_X)$ is nef and big and it has {\it snc} support by assumption,  the vanishing theorem gives
\begin{eqnarray}
|mK_{X'}||_S&\lsgeq& |K_{X'}+\roundup{(m-1)\pi^*(K_X)-\frac{1}{\mu}E_S'}||_S\notag\\
&\lsgeq& |K_S+\roundup{\big((m-1)\pi^*(K_X)-S-\frac{1}{\mu}E_S'\big)|_S}|. \label{dd1}
\end{eqnarray}
By assumption, we write $\frac{1}{\beta}\pi^*(K_X)|_S\equiv C+H$ for an effective $\bQ$-divisor $H$ on $S$.  Pick another generic irreducible element $C'$ of $|G|$.
 Similarly since
$$\big((m-1)\pi^*(K_X)-S-\frac{1}{\mu}E_S'\big)|_S-C-C'-2H\equiv (m-1-\frac{1}{\mu}-\frac{2}{\beta})\pi^*(K_X)|_S$$
is nef and big,  the vanishing theorem implies the surjective map:
\begin{eqnarray}
&&H^0(S, K_S+\roundup{\big((m-1)\pi^*(K_X)-S-\frac{1}{\mu}E_S'\big)|_S-2H}\notag\\
&\lrw& H^0(C, K_C+D_m)\oplus H^0(C', K_{C'}+D'_m)\label{dd2}
\end{eqnarray}
where $D_m=(\roundup{\big((m-1)\pi^*(K_X)-S-\frac{1}{\mu}E_S'\big)|_S-C-C'-2H}+C')|_C$
satisfying $\deg(D_m)>0$ and, similarly, $\deg(D'_m)>0$.  Thus both groups $H^0(C, K_C+D_m)$ and  $H^0(C', K_{C'}+D'_m)$ are non-zero. Relations (\ref{dd1}) and (\ref{dd2}) imply that $|mK_{X'}||_S$ distinguishes different generic irreducible elements of $|G|$.

(2).  By virtue of Relation (\ref{dd1}) (while replacing $\frac{1}{\mu}E_S'$ with $\frac{1}{\zeta}E'$), we may consider the linear system
$$|K_S+\roundup{\big((m-1)\pi^*(K_X)-S-\frac{1}{\zeta}E'\big)|_S}|.$$

Note that $p_g(X)>0$ implies $p_g(S)>0$.

When $\zeta=1$, we clearly have
\begin{eqnarray}
&&|K_S+\roundup{\big((m-1)\pi^*(K_X)-S-E'\big)|_S}|\notag\\
&\lsgeq& |K_S+\roundup{\big((m-m_1-1)\pi^*(K_X)-S-E'\big)|_S}+G|\label{yx1}
\end{eqnarray}
and
$\big((m-m_1-1)\pi^*(K_X)-S-E'\big)|_S$ represents an effective, nef and big $\bQ$-divisor as $m>m_0+m_1+1$.

When $\zeta>1$, by definition, $|M_{m_0}|$ is composed of a pencil.  Fix, from the very beginning,  a representing effective Weil divisor $K_1\sim K_X$ and set $T_1=\pi^*(K_1)$.
Denote by $T_{1,h}$ the horizontal part of $T_1$. Then $m_0T_{1,h}=E'_{h}$, the horizontal part of $E'$.  Note that ${E'}|_S={E'_{h}}|_S$. Thus Relation (\ref{yx1}) (replacing $\pi^*(K_X)$ with $\pi^*(K_1)$) still holds and $\big((m-m_1-1)\pi^*(K_1)-S-\frac{1}{\zeta}E'\big)|_S$ represents an effective, nef and big $\bQ$-divisor as $m>\frac{m_0}{\zeta}+m_1+1$.

Now we only need to consider the case when $|G|$ is composed of an irrational pencil. Pick two generic irreducible elements $C$ and $C'$ of $|G|$. The vanishing theorem gives the surjective map:
\begin{eqnarray*}
&&H^0(K_S+\roundup{\big((m-m_1-1)\pi^*(K_1)-S-\frac{1}{\zeta}E'\big)|_S}+G)\\
&\lrw& H^0(C, K_C+\tilde{D})\oplus H^0(C', K_{C'}+\tilde{D}')
\end{eqnarray*}
where $\tilde{D}=\roundup{\big((m-m_1-1)\pi^*(K_1)-S-\frac{1}{\zeta}E'\big)|_S}|_C+(G-C)|_C$ has positive degree and so does $\tilde{D}'$.  This implies that the two groups
$H^0(C, K_C+\tilde{D})$ and $H^0(C', K_{C'}+\tilde{D}')$ are non-zero. We are done.
\end{proof}

\subsection{Two restriction maps on canonical class of $(1,2)$-surfaces}\label{3.2}

Within this subsection, we always work under the following assumption:

{\bf ($\mathcal{L}$)} {\em Keep the setting in \ref{setup}. Let $m_1>m_0$ be an integer. Assume that $|M_{m_1}|$ is base point free,  $d_{m_0}=1$, $\Gamma\cong \bP^1$ and that $F$ is a $(1,2)$-surface. Take $|G|=\Mov|K_F|$. Modulo possibly a further birational modification of $X'$, we may assume that $|G|$ is base point free. Let $C$ be a generic irreducible element of $|G|$. } 

\begin{defn}\label{twomaps} For any integers $j\geq 0$, define the following restriction maps:
$$H^0(X', M_{m_1}-jF)\overset{\theta_{m_1,-j}}\lrw H^0(F, M_{m_1}|_F),$$
$$H^0(F, M_{m_1}|_F-jC)\overset{\psi_{m_1,-j}}\lrw H^0(C, M_{m_1}|_C).$$
Set $U_{m_1,-j}=\text{Im}(\theta_{m_1,-j})$, $V_{m_1,-j}=\text{Im}(\psi_{m_1, -j})$,
$u_{m_1, -j}=\dim U_{m_1,-j}$ and $v_{m_1, -j}=\dim V_{m_1, -j}$.
\end{defn}

\begin{prop}\label{-jl} Let $X$ be a minimal projective 3-fold of general type with $P_{m_0}\geq 2$. Keep Assumption ($\mathcal{L}$). 
%Assume that $d_{m_0}=1$, $\Gamma\cong \bP^1$ and $F$ is a $(1,2)$-surface.
Then
\begin{itemize}
\item[(1)]  the sequence $\{u_{m_1, -j}|j=0,1,\cdots\}$  is decreasing and so is the sequence $\{v_{m_1,-j}| j=0,1, \cdots\}$.

\item[(2)] We have
$$v_{m_1,0}\leq \begin{cases}
m_1-1, & m_1>2;\\
2,& m_1=2\end{cases}$$

\item[(3)]  If there is a positive integer $k_1$ (resp. $k_2$) such that $P_{m_1}>k_1 u_{m_1,0}$ (resp. $h^0(F, M_{m_1}|_F)>k_2v_{m_1,0}$), 
then
$$M_{m_1}\geq k_1F
\ \ (\text{resp.} M_{m_1}|_F\geq k_2C). $$
\end{itemize}
\end{prop}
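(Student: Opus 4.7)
First, I will establish that both sequences $\{u_{m_1,-j}\}$ and $\{v_{m_1,-j}\}$ are decreasing. Since $\Gamma\cong\bP^1$ and $F=f^{-1}(p)$ for some $p$, one has $\OO_{X'}(F)\cong f^*\OO_{\bP^1}(1)$, which has a two-dimensional space of global sections. I can therefore pick $\ell\in H^0(\OO_{X'}(F))$ defining a different fiber $F'\neq F$, so that $\ell|_F$ is a nowhere-vanishing section trivializing $\OO_{X'}(F)|_F$. Multiplication by $\ell$ produces an inclusion $H^0(X',M_{m_1}-(j+1)F)\hookrightarrow H^0(X',M_{m_1}-jF)$ that is compatible, up to the nonzero scalar $\ell|_F$, with the restriction maps $\theta_{m_1,-j}$; hence $U_{m_1,-(j+1)}\subseteq U_{m_1,-j}$ inside $H^0(F,M_{m_1}|_F)$. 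An identical argument on $F$, using a section of $\OO_F(C)$ coming from another member of the base-point-free pencil $|G|$, yields the decreasing property for $\{v_{m_1,-j}\}$.

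For the upper bound on $v_{m_1,0}$, I will use $V_{m_1,0}\subseteq H^0(C,M_{m_1}|_C)$ and compute on the genus two curve $C$. Since $M_{m_1}\leq m_1 K_{X'}$ as effective divisors and adjunction together with $F|_F\sim 0$ gives $K_{X'}|_F\sim K_F$, the divisor $M_{m_1}|_F$ is bounded (as an effective class) by something linearly equivalent to $m_1 K_F$. On the $(1,2)$-surface $F$, the pencil $|G|$ has generic element $C$ a smooth curve of genus two with $K_F\cdot C=1$ (by adjunction together with $C^2=1$), so $\deg(M_{m_1}|_C)\leq m_1$. When $m_1>2$ one has $\deg(M_{m_1}|_C)>2g(C)-2$, so $M_{m_1}|_C$ is non-special and $h^0(C,M_{m_1}|_C)\leq m_1-1$ by Riemann--Roch. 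When $m_1=2$, Clifford's theorem forces $h^0\leq 2$, completing the bound.

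\textbf{Plan for part (2).} The idea is a telescoping argument built on the structure established in (1). Set $h_j:=h^0(X',M_{m_1}-jF)$. The standard short exact sequence
\[
0\to\OO_{X'}(M_{m_1}-(j+1)F)\to\OO_{X'}(M_{m_1}-jF)\to\OO_{X'}(M_{m_1})|_F\to 0,
\]
obtained from the defining section of $F$ together with the trivialization $\OO(-jF)|_F\cong\OO_F$, identifies $U_{m_1,-j}$ with $H^0(X',M_{m_1}-jF)/H^0(X',M_{m_1}-(j+1)F)$, so $u_{m_1,-j}=h_j-h_{j+1}$. Combining this with $u_{m_1,-j}\leq u_{m_1,0}$ from part (1) and the hypothesis $P_{m_1}>k_1 u_{m_1,0}$, I obtain
\[
h_{k_1}=P_{m_1}-\sum_{j=0}^{k_1-1}u_{m_1,-j}\geq P_{m_1}-k_1 u_{m_1,0}>0,
\]
which gives $M_{m_1}\geq k_1 F$. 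An identical argument on $F$, with $h'_j:=h^0(F,M_{m_1}|_F-jC)$ giving $v_{m_1,-j}=h'_j-h'_{j+1}$, yields $h'_{k_2}\geq h^0(F,M_{m_1}|_F)-k_2 v_{m_1,0}>0$, and hence $M_{m_1}|_F\geq k_2 C$.

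The main subtlety to handle carefully is the identification of the targets $H^0(F,M_{m_1}|_F)$ and $H^0(C,M_{m_1}|_C)$ as $j$ varies: one must fix trivializations of $\OO_{X'}(-F)|_F$ and $\OO_F(-C)|_C$ so that the multiplication-by-section trick genuinely produces nested subspaces and not merely isomorphic quotients. Once this bookkeeping is consistent, everything else reduces to a standard Riemann--Roch estimate on the genus two curve $C$ together with a routine telescoping sum.
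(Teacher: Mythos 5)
Your argument for the decreasing property of both sequences (multiplication by a section of $\OO_{X'}(F)$, resp.\ of $\OO_F(C)$, not vanishing on $F$, resp.\ on $C$) and your telescoping argument for part (2) (using $u_{m_1,-j}=h_j-h_{j+1}$ and $u_{m_1,-j}\leq u_{m_1,0}$) are correct and are essentially the paper's proof. The problem is in the degree estimate underlying the bound on $v_{m_1,0}$, and it is a genuine error, not a bookkeeping issue. Under Assumption ($\pounds$) the pencil $|G|=\Mov|K_F|$ is base point free, so $F$ is the $(1,2)$-surface $F_0$ blown up at the base point of $|K_{F_0}|$, and $C=\sigma^*(K_{F_0})-E$ with
$$C^2=0,\qquad (K_F\cdot C)=2,\qquad (\sigma^*(K_{F_0})\cdot C)=K_{F_0}^2=1.$$
Your claimed values $C^2=1$ and $(K_F\cdot C)=1$ are the intersection numbers on the \emph{minimal} model, and they are incompatible with base point freeness of $|G|$ (a pencil with $C^2=1$ has a base point; note also that with $C^2=1$ the line bundle $\OO_F(C)|_C$ has degree $1$, so your own trivialization argument for the sequence $\{v_{m_1,-j}\}$ would collapse). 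With the correct numbers, your comparison $M_{m_1}|_F\preccurlyeq m_1K_F$ only gives $\deg(M_{m_1}|_C)\leq m_1(K_F\cdot C)=2m_1$, whence Riemann--Roch and Clifford yield only $v_{m_1,0}\leq 2m_1-1$ (and $\leq 3$ for $m_1=2$), far weaker than the claimed bound.

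The missing idea is that one must compare $M_{m_1}|_F$ with $m_1\sigma^*(K_{F_0})$ rather than with $m_1K_F$. Two standard ways to do this: (a) since $m_1\pi^*(K_X)\sim_{\mathbb{Q}}M_{m_1}+E'_{m_1}$ with $E'_{m_1}$ effective and $C$ moves in a family covering $X'$, one has $(M_{m_1}|_F\cdot C)\leq m_1(\pi^*(K_X)|_F\cdot C)$; then, because $K_F\sim \pi^*(K_X)|_F+E_\pi|_F$ expresses $K_F$ as a nef part plus an effective part while $K_F=\sigma^*(K_{F_0})+E_\sigma$ is its Zariski decomposition, the maximality of the positive part gives $\sigma^*(K_{F_0})-\pi^*(K_X)|_F$ effective, hence $(\pi^*(K_X)|_F\cdot C)\leq(\sigma^*(K_{F_0})\cdot C)=1$ (this is the inequality $\xi\leq 1$ the paper uses, e.g.\ in Lemma \ref{M1}); or (b) note that $M_{m_1}|_F$ is nef and $m_1K_F\sim M_{m_1}|_F+Z_{m_1}|_F$ with $Z_{m_1}|_F$ effective, so again by Zariski maximality $(M_{m_1}|_F\cdot C)\leq m_1(\sigma^*(K_{F_0})\cdot C)=m_1$. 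Either route restores the inequality $(M_{m_1}|_F\cdot C)\leq m_1$, after which your Riemann--Roch/Clifford conclusion (non-special case for $m_1>2$, Clifford for $m_1=2$) goes through exactly as you wrote it.
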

\begin{proof}  (1) For any $j\geq 0$, since $M_{m_1}-jF\geq M_{m_1}-(j+1)F$ and
$M_{m_1}|_F-jC\geq M_{m_1}|_F-(j+1)C$, the sequences $\{u_{m_1, -j}\}$ and $\{v_{m_1,-j}\}$ are naturally decreasing. 

(2) Since $\pi^*(K_X)|_F$ is nef and big, we have $\pi^*(K_X)|_F\leq \sigma^*(K_{F_0})\le K_F$ by considering the Zariski decomposition of $K_F$. Hence
$$ (M_{m_1}|_F\cdot C)\leq m_1(\pi^*(K_X)|_F\cdot C)\leq m_1(\sigma^*(K_{F_0})\cdot C)=m_1.$$
%\begin{align*}
%m_1\ge& (m_1\pi^*(K_X)|_F\cdot C)\\
%\ge & (M_{m_1}|_F\cdot C),
%\end{align*}
%where the first inequality follows by $\pi^*(K_X)|_F\le\sigma^*K_{F_0}$ and the last inequality follows by $M_{m_1}\le m_1\pi^*(K_X)$.
Note that $C$ is a curve of genus $2$. If $h^1(C, M_{m_1}|_C)=0$, by Riemann-Roch theorem, we have
$$
h^0(C, M_{m_1}|_C)=(M_{m_1}\cdot C)-1.
$$
Thus we deduce that $v_{m_1,0}\le h^0(C, M_{m_1}|_C)\le m_1-1$, which implies the required inequalities. If $h^1(C, M_{m_1}|_C)>0$,  we have
$$
v_{m_1,0}\le h^0(C, M_{m}|_C)\le 1+\frac{1}{2}(M_{m_1}\cdot C)\le 1+\frac{1}{2}m_1,
$$
where the second inequality follows by Clifford's theorem. We can easily deduce the required upper bound of $v_{m_1,0}$ from the above inequality. The proof of (2) is completed.

(3)  Since $h^0(X', M_{m_1}-k_1F)>0$ by the decreasing property of $\{u_{m_1,-j}\}$, we see $M_{m_1}\geq k_1F$. Similarly, one has $M_{m_1}|_F\geq k_2C.$
\end{proof}

\begin{prop}\label{k1} Let $X$ be a minimal projective 3-fold of general type with $P_{m_0}\geq 2$. Keep Assumption ($\mathcal{L}$).
%Suppose that $d_{m_0}=1$, $\Gamma\cong \bP^1$ and the general fiber $F$ of $f$ is a $(1,2)$-surface.  Let $C$ be a generic irreducible element of $\Mov|K_F|$.
Suppose that the following conditions hold for some positive integers $n_1$, $j_1$ and $l_1$:
\begin{itemize}
\item[(i)] there exists an effective divisor  $S_{n_1, -j_1}$ on $X'$ such that $|S_{n_1,-j_1}|$ is base point free;
\item[(ii)] $n_1K_{X'}\geq j_1F+S_{n_1, -j_1}$;
\item[(iii)] $S_{n_1,-j_1}|_F\geq l_1\sigma^*(K_{F_0})$ (resp. $S_{n_1,-j_1}|_F\geq l_1C$);
\end{itemize}
Then one has
$$\pi^*(K_X)|_F\geq \frac{l_1+j_1}{n_1+j_1}\sigma^*(K_{F_0})\ \ 
(\text{resp. }\  \pi^*(K_X)|_F\geq \frac{l_1+j_1}{n_1+j_1}C).$$
\end{prop}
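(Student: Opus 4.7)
The plan is to combine assumption (ii) with Kawamata's extension theorem (Theorem~\ref{KaE}) to strengthen the trivial restriction bound one gets by naively restricting $n_1K_{X'}\geq j_1F+S_{n_1,-j_1}$ to $F$. The driving identity is
\[
N(n_1+j_1)K_{X'} \;=\; Nj_1(K_{X'}+F)+NS_{n_1,-j_1}+NR_1,
\]
where $R_1 := n_1K_{X'}-j_1F-S_{n_1,-j_1}\geq 0$ by (ii). On $F$, the two summands $j_1(K_{X'}+F)$ and $S_{n_1,-j_1}$ will contribute $j_1\sigma^*(K_{F_0})$ (via adjunction and Kawamata lifting) and $l_1\sigma^*(K_{F_0})$ (via (iii)) respectively, totalling $(l_1+j_1)\sigma^*(K_{F_0})$ against the coefficient $(n_1+j_1)$ on the left.

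First, I verify the hypothesis of Theorem~\ref{KaE} for $(Z,D)=(X',F)$: since $\pi^*(K_X)$ is big, one may write $K_{X'}+F\simQ \varepsilon A+B$ with $A$ ample and $B$ effective, and, replacing any $F$-component of $B$ by a generic $F'\sim F$, arrange $F\not\subset \mathrm{Supp}(B)$. Theorem~\ref{KaE} then gives
\[
H^0\bigl(X',Nj_1(K_{X'}+F)\bigr) \twoheadrightarrow H^0\bigl(F,Nj_1K_F\bigr) \qquad (Nj_1\geq 2).
\]
Using $K_F = \sigma^*(K_{F_0})+E_\sigma$ with $E_\sigma\geq 0$ effective (resp.\ $K_F\geq C$ since $C\in \Mov|K_F|$), one picks $D_0\in |Nj_1K_F|$ with $D_0\geq Nj_1\sigma^*(K_{F_0})$ (resp.\ $D_0\geq Nj_1 C$) and lifts it to $\tilde D\in |Nj_1(K_{X'}+F)|$ with $\tilde D|_F = D_0$. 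Then the divisor
\[
M' := \tilde D + NS_{n_1,-j_1}+NR_1 \in |N(n_1+j_1)K_{X'}|
\]
satisfies $M'|_F \geq Nj_1\sigma^*(K_{F_0})+Nl_1\sigma^*(K_{F_0}) = N(l_1+j_1)\sigma^*(K_{F_0})$ by (iii); the $C$-version is identical.

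To conclude, one invokes the standard inequality $N(n_1+j_1)\pi^*(K_X)\geq M_{N(n_1+j_1)}$---which holds because the $\pi$-exceptional divisors are absorbed in the fixed locus of $|NK_{X'}|$---and passes from $M'$ to a moving representative by subtracting the fixed part of $|N(n_1+j_1)K_{X'}|$. Restricting to $F$ and dividing by $N(n_1+j_1)$ then yields
\[
\pi^*(K_X)|_F \;\geq\; \frac{l_1+j_1}{n_1+j_1}\sigma^*(K_{F_0})
\qquad \Bigl(\text{resp.\ } \pi^*(K_X)|_F \geq \frac{l_1+j_1}{n_1+j_1}C\Bigr)
\]
in the $\bQ$-divisorial sense used throughout the paper (cf.\ (\ref{cri})).

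The main obstacle is this last step: one must verify that the lower bound on $M'|_F$ is preserved when replacing $M'$ by a moving representative of $|N(n_1+j_1)K_{X'}|$ and restricting to $F$, i.e.\ that the $\pi$-exceptional contribution to the fixed part of $|NK_{X'}|$ restricts to a divisor on the general fiber $F$ that is asymptotically negligible compared with $N(l_1+j_1)\sigma^*(K_{F_0})$ as $N\to\infty$. This is exactly the content of the ``$N\pi^*(K_X)\geq M_N$'' convention of Section~\ref{ss2}, which in turn rests on the minimality of $X$ and the terminal nature of its singularities.
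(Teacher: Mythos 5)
Your construction of $M'=\tilde D+NS_{n_1,-j_1}+NR_1\in |N(n_1+j_1)K_{X'}|$ via Kawamata's extension theorem is sound, but the concluding step is a genuine gap, and the justification you give for it is incorrect. A single member $M'$ of $|N(n_1+j_1)K_{X'}|$ with $M'|_F\geq N(l_1+j_1)\sigma^*(K_{F_0})$ does not bound $\pi^*(K_X)|_F$ from below: writing $K_{X'}=\pi^*(K_X)+E_{\pi}$, one only gets $N(n_1+j_1)\pi^*(K_X)|_F\simQ M'|_F-N(n_1+j_1)E_{\pi}|_F$, and the defect $N(n_1+j_1)E_{\pi}|_F$ (more generally, the restriction to $F$ of the fixed part of $|N(n_1+j_1)K_{X'}|$) scales \emph{linearly} in $N$; it is not asymptotically negligible. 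Nor is this ``exactly the content'' of the convention $N\pi^*(K_X)\geq M_N$: that convention asserts only that $N\pi^*(K_X)-M_N$ is $\bQ$-linearly equivalent to an effective divisor on $X'$, and it is perfectly consistent with the fixed part of $|N(n_1+j_1)K_{X'}|$ restricting to a large divisor on $F$ that destroys your bound. What is actually needed is the inequality $M_{N(n_1+j_1)}|_F\geq N(l_1+j_1)\sigma^*(K_{F_0})$ for the \emph{moving part} itself, and exhibiting one effective member can never certify this.

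The reason your argument cannot be pushed through as written is that in $M'$ the summand $NS_{n_1,-j_1}+NR_1$ is held fixed: even if you let $D_0$ (hence $\tilde D$) vary over all of $|Nj_1K_F|$, the subsystem of $|N(n_1+j_1)K_{X'}|$ you generate restricts on $F$ to $|Nj_1K_F|$ plus a fixed divisor, whose moving part is only $Nj_1\sigma^*(K_{F_0})$; this yields $\pi^*(K_X)|_F\geq\frac{j_1}{n_1+j_1}\sigma^*(K_{F_0})$, strictly weaker than the claim. Symptomatically, you never invoke hypothesis (i), the base point freeness of $|S_{n_1,-j_1}|$, which is precisely what makes the $l_1$-contribution move. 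The paper's proof handles this by replacing $(sj_1-1)(K_{X'}+F)$ with its moving part $N_{sj_1-1}$ (nef and big, via Theorem~\ref{KaE}) and applying the Kawamata--Viehweg vanishing theorem to $K_{X'}+N_{sj_1-1}+sS_{n_1,-j_1}+F$, so that $|s(n_1+j_1)K_{X'}||_F$ contains the \emph{complete} system $|K_F+N_{sj_1-1}|_F+sS_{n_1,-j_1}|_F|\succcurlyeq|s(l_1+j_1)\sigma^*(K_{F_0})|$; since the latter is base point free, the moving part of the restricted system dominates $s(l_1+j_1)\sigma^*(K_{F_0})$, and only then does restricting $s(n_1+j_1)\pi^*(K_X)\geq M_{s(n_1+j_1)}$ to the general fiber $F$ give the proposition. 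Your argument could be repaired in the same spirit (let $S_{n_1,-j_1}$ move in its base point free system as well, and take moving parts of the resulting family of restrictions), but as it stands the last step fails.
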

\begin{proof} By assumption, we may assume that $S_{n_1,-j_1}$ is smooth. Take a sufficiently large positive integer $s$. Denote by $|N_{sj_1-1}|$
the moving part of $|(sj_1-1)(K_{X'}+F)|$.  By Theorem \ref{KaE}, we have
$$|(sj_1-1)(K_{X'}+F)||_F=|(sj_1-1)K_F|.$$
Since $|(sj_1-1)(K_{X'}+F)|$ clearly distinguishes different fibers of $f$, $|N_{sj_1-1}|$ is big. Modulo a further birational modification, we may and do assume that $|N_{sj_1-1}|$ is base point free. In particular,  $N_{sj_1-1}$ is nef and big. Kawamata-Viehweg vanishing  theorem gives
\begin{eqnarray*}
|s(n_1+j_1)K_{X'}||_F
&\lsgeq &|K_{X'}+N_{sj_1-1}+sS_{n_1,-j_1}+F||_F\\
&=&|K_F+N_{sj_1-1}|_F+sS_{n_1,-j_1}|_F|\\
&\lsgeq&|s(l_1+j_1)\sigma^*(K_{F_0})|.
\end{eqnarray*}
Thus, by the base point free theorem for surfaces, one has
$$\pi^*(K_X)|_F\geq \frac{l_1+j_1}{n_1+j_1}\sigma^*(K_{F_0}).$$
The other statement trivially follows since $K_F\geq \sigma^*(K_{F_0})\geq C$.
\end{proof}

\begin{prop}\label{X1} Let $X$ be a minimal projective 3-fold of general type with $p_g(X)>0$, $P_{m_0}\geq 2$.  Keep Assumption ($\mathcal{L}$).  %Suppose that $d_{m_0}=1$, $\Gamma\cong \bP^1$ and that $F$ is a $(1,2)$-surface.
 Assume that $|S_1|$ is a moving linear system on $X'$ so that $|S_1|$ and $|F|$ are not composed of the same pencil and that
$$M_{m_1}\geq F+S_1.$$
Suppose that $m_2$ is a non-negative integer such that 
\begin{enumerate}
	\item[(a)] $|(m_2+1)K_{X'}|$ distinguishes different generic irreducible elements of $|M_{m_0}|$;
	\item[(b)] $|(m_2+1)K_{X'}||_{F}$ distinguishes different generic irreducible elements of $|G|$.
\end{enumerate}
Then
\begin{itemize}
\item[(1)] {{Suppose that $|S_1|_F|$ and $|G|$ are not composed of the same pencil. Set $\delta=(S_1|_F\cdot C)$. The following statements hold:}}
    \begin{itemize}
    \item[(1.1)] {{For any positive integer $n>m_1+\frac{1}{\beta}$, one has
    $$(n+1)\xi\ge 2+\delta+\roundup{(n-m_1-\frac{1}{\beta})\cdot \xi}.$$ Moreover, when $S_1|_F$ is big, the above inequality holds for $n\ge m_1+\frac{1}{\beta}$.}}
    \item[(1.2)] {{$\varphi_{n+1, X}$ is birational for all 
    $$
    n\ge \mathrm{max}\{m_2, \llcorner \frac{1}{\beta}+\frac{2m_1}{\delta}+\frac{1}{\mu}(1-\frac{2}{\delta}) \lrcorner+1\}.
    $$
}}
\end{itemize}
\item[(2)] if $|S_1|_F|$ and $|G|$ are composed of the same pencil, the following statements hold:
\begin{itemize}
\item[(2.1)] one has
$$(n+m_1+1)\xi(m_0, |G|)\geq \roundup{n\xi(m_0, |G|)}+2$$
 for any integer $n$ satisfying $n\xi(m_0, |G|)>1$. In particular,
 $$\xi(m_0, |G|)\geq \frac{2}{m_1+1};$$
\item[(2.2)]  $\varphi_{n+m_1+1, X}$ is birational for any integer $n$ satisfying $$n\xi(m_0, |G|)>2 \quad \text{and} \quad n\ge m_2-m_1.$$ \end{itemize}
\end{itemize}
\end{prop}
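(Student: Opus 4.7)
The plan is to apply Kawamata--Viehweg vanishing in two stages, first to restrict sections from $X'$ to $F$ and then from $F$ to $C$, extracting a $K_C$ contribution via adjunction at each step and turning $M_{m_1}\geq F+S_1$ into concrete lower bounds on the degrees of the restricted systems on $C$. For (1.1), for $n>m_1+1/\beta$, I would decompose
\[
(n+1)\pi^*(K_X)-F \simQ K_{X'}-E_\pi+\bigl(S_1+(M_{m_1}-F-S_1)+E'_{m_1}+(n-m_1)\pi^*(K_X)\bigr),
\]
using $m_1\pi^*(K_X)\simQ M_{m_1}+E'_{m_1}$, and arrange (after a further log-resolution if necessary) that the bracketed piece is a nef and big $\bQ$-divisor with snc fractional support. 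Kawamata--Viehweg vanishing then produces the surjection $H^0(X',(n+1)K_{X'})\twoheadrightarrow H^0(F,(n+1)K_{X'}|_F)$ together with
\[
(n+1)K_{X'}|_F \lsgeq K_F+S_1|_F+\bigl\lceil(n-m_1-\tfrac{1}{\beta})\pi^*(K_X)|_F\bigr\rceil,
\]
after absorbing the decomposition $\frac{1}{\beta}\pi^*(K_X)|_F\equiv C+H_F$ of the $\beta$-shift into the nef-big part. A second application of vanishing on $F$ restricting to $C$, combined with $(S_1|_F\cdot C)=\delta$, yields $|(n+1)K_{X'}||_C\lsgeq |K_C+D|$ with $\deg D\geq \delta+\roundup{(n-m_1-\frac{1}{\beta})\xi}$, which gives (1.1). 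When $S_1|_F$ is big, $H_F$ alone supplies enough positivity so that the same argument survives at the boundary $n=m_1+1/\beta$.

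For (1.2), I apply Theorem~\ref{key-birat} with $m=n+1$. The distinguishing hypotheses (i) and (ii) hold for $n\geq m_2$ since $(n+1)K_{X'}=(m_2+1)K_{X'}+(\text{effective})$ preserves both distinguishing properties. For the numerical condition $\alpha(n+1)=(n-\frac{1}{\mu}-\frac{1}{\beta})\xi>2$, I use (1.1) with $n$ large (so the ceiling is essentially $(n-m_1-1/\beta)\xi$ and the $n\xi$ terms cancel) to obtain the lower bound $\xi\geq \tfrac{2+\delta}{m_1+1+1/\beta}$. Inserting this into the required inequality and rearranging gives the threshold on $n$; a single bootstrap round (reinserting the improved $\xi$ back into (1.1) at a well chosen value) is used to sharpen the estimate and land exactly on $n>\frac{1}{\beta}+\frac{2m_1}{\delta}+\frac{1}{\mu}(1-\frac{2}{\delta})$.

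For part (2), the key simplification is that $|S_1|_F|$ and $|G|$ being composed of the same pencil forces $S_1|_F\geq C$ and $(S_1|_F\cdot C)=0$, so one can peel off one copy of $C$ without paying the $\frac{1}{\beta}$-cost of (1). The parallel decomposition
\[
(n+m_1+1)\pi^*(K_X)-F-S_1 \simQ K_{X'}-E_\pi+E'_{m_1}+n\pi^*(K_X),
\]
together with the analogous two-step vanishing/restriction argument (now using $S_1|_F\geq C$ directly instead of the $\beta$-shift), produces $(n+m_1+1)\xi\geq 2+\roundup{n\xi}$ for all $n$ with $n\xi>1$, which is (2.1); letting $n\to\infty$ yields $\xi\geq 2/(m_1+1)$. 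For (2.2), I invoke Theorem~\ref{key-birat} again with $m=n+m_1+1$: the distinguishing conditions translate to $n\geq m_2-m_1$, and $\alpha(n+m_1+1)>2$ becomes the assumed $n\xi>2$ after a short rearrangement.

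The main obstacle I anticipate is ensuring that every time vanishing is invoked, the fractional part of the chosen nef--big $\bQ$-divisor has snc support and that $F$ (respectively $C$) is not contained in that support, so the restriction maps are legitimate. This forces extra care in choosing $\pi$ (or further modifying it) and in the bookkeeping of the various effective residuals $E_\pi$, $E'_{m_1}$, $H_F$, and $M_{m_1}-F-S_1$, especially when matching the ceilings in the rounding terms of (1.1) and (2.1) and when converting the computed $\xi$-bound into the stated explicit threshold in (1.2).
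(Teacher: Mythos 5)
You handle (1.1) and (2.1) essentially as the paper does: a two--stage Kawamata--Viehweg restriction, first from $X'$ to $F$ and then from $F$ to $C$, peeling off $C$ via $\frac{1}{\beta}\pi^*(K_X)|_F\equiv C+H$ in case (1) and via $C_1\equiv C$ in case (2). One technical slip there: the bracketed divisor in your decomposition contains the effective pieces $(M_{m_1}-F-S_1)+E'_{m_1}$, which are in general \emph{not} nef, and no log-resolution can make an effective non-nef divisor nef. The correct move is to discard them, i.e.\ use $(n+1)K_{X'}\geq K_{X'}+\roundup{(n-m_1)\pi^*(K_X)}+S_1+F$ and apply vanishing to the genuinely nef and big part $(n-m_1)\pi^*(K_X)+S_1$ --- exactly what the paper does (with the shift $(n+m_1+1)K_{X'}$). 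That part is fixable.

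The genuine gap is (1.2). You reduce it to Theorem \ref{key-birat}, i.e.\ to $\alpha(n+1)=(n-\frac{1}{\mu}-\frac{1}{\beta})\xi>2$, feeding in a lower bound on $\xi$ from (1.1) plus ``bootstrapping''. This cannot reach the stated threshold: in $\alpha$ both $\frac{1}{\mu}$ and $\frac{1}{\beta}$ enter with full weight and the whole quantity $2$ must be paid in multiples of $\xi$, whereas for $\delta=2$ the threshold in (1.2) is $n>\frac{1}{\beta}+m_1$, independent of $\mu$. Concretely, take the values occurring in the proof of Proposition \ref{422} ($m_0=4$, $\mu=\frac14$, $\beta=\frac15$, $m_1=6$, $\delta=2$): (1.2) claims $\varphi_{n+1}$ is birational for $n\geq 12$, so one needs $\alpha(13)=3\xi>2$, i.e.\ $\xi>\frac23$; but $\xi=\frac25$ satisfies the inequality of (1.1), namely $(n+1)\xi\geq 4+\roundup{(n-11)\xi}$, for every $n\geq 12$ (the fractional parts of $\frac25(n-11)$ never exceed $\frac45$), so no iteration of (1.1) can ever prove $\xi>\frac25$, and Theorem \ref{key-birat} is simply inapplicable at $m=13$. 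The paper's proof of (1.2) is not a reduction to Theorem \ref{key-birat} at all: it applies vanishing to the weighted divisor $n\pi^*(K_X)-\frac{2}{\delta}E_{m_1}-\frac{1}{\mu}(1-\frac{2}{\delta})E_F'$, whose whole point is that the summand $\frac{2}{\delta}S_1$ restricts to degree exactly $\frac{2}{\delta}\cdot\delta=2$ on $C$ at a cost of only $\frac{2m_1}{\delta}$ multiples of $\pi^*(K_X)$ (rather than $\frac{2}{\xi}$), so that only an arbitrarily small surplus $(n-\frac{1}{\beta}-\frac{2m_1}{\delta}-\frac{1}{\mu}(1-\frac{2}{\delta}))\xi>0$ is needed on top. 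That weighting device is the missing idea. The same defect, in milder form, affects your (2.2): $\alpha(n+m_1+1)=(n+m_1-\frac{1}{\mu}-\frac{1}{\beta})\xi>2$ is \emph{not} implied by $n\xi>2$ unless $m_1\geq\frac{1}{\mu}+\frac{1}{\beta}$, which fails in the very application above ($6<9$; the paper uses (2.2) there with $n=6$, $\xi=\frac25$, where $\alpha(13)=\frac65<2$). Here the fix is within your reach: your own (2.1) construction already yields $|(n+m_1+1)K_{X'}||_C\lsgeq|K_C+D_n|$ with $\deg D_n\geq\roundup{n\xi}\geq 3$, hence degree $\geq 2g(C)+1$ on $C$, and birationality follows from this together with hypotheses (a) and (b), bypassing Theorem \ref{key-birat} entirely.
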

\begin{proof} Modulo further birational modifications, we may and do assume that $|S_1|$ is base point free.  Let $|G_1|=|S_1|_F|$ and $C_1$ the generic irreducible element of $|G_1|$.
By assumption, $|G_1|$ is also base point free. By the Kawamata-Viehweg vanishing theorem, we have
\begin{eqnarray}
|(n+m_1+1)K_{X'}||_F&\lsgeq&|K_{X'}+\roundup{n\pi^*(K_X)}+S_1+F||_F\notag\\
&\lsgeq& |K_F+\roundup{n\pi^*(K_X)|_F}+C_1|.\label{m1b}
\end{eqnarray}

Since $p_g(X)>0$, we see that $|(n+m_1+1)K_{X'}|$ distinguishes different general $F$ and
$|(n+m_1+1)K_{X'}||_F$ distinguishes different general $C$. What we need to do is to investigate  $|(n+m_1+1)K_{X'}||_C$.
\medskip

(1).
If $|G_1|$ and $|G|$ are not composed of the same pencil, then
$$\xi(m_0, |G|)\geq \frac{1}{m_1}(M_{m_1}|_F\cdot C)\geq \frac{1}{m_1}(C_1\cdot C)\geq \frac{2}{m_1}.$$
We have
$$\frac{1}{\beta}\pi^*(K_X)|_F\equiv C+H_{m_0}$$
where $H_{m_0}$ is certain effective $\bQ$-divisor.  The vanishing theorem on $F$ gives
\begin{eqnarray}
 |K_F+\roundup{n\pi^*(K_X)|_F}+C_1||_C&\lsgeq& |K_F+\roundup{n\pi^*(K_X)|_F-H_{m_0}}+C_1||_C\notag\\
 &=&|K_C+\tilde{D}_{m_0}|\label{m02}
 \end{eqnarray}
where $\deg(\tilde{D}_{m_0})\geq (n-\frac{1}{\beta})\xi+\delta>2$ whenever $n>\frac{1}{\beta}$. Thus $(1.1)$ holds.

For $(1.2)$, set $$n= \mathrm{max}\{m_2, \llcorner \frac{1}{\beta}+\frac{2m_1}{\delta}+\frac{1}{\mu}(1-\frac{2}{\delta}) \lrcorner+1\}.$$ 
Write
$$
m_1\pi^*(K_X)\equiv F+S_1+E_{m_1}.
$$
By the Kawamata-Viehweg vanishing theorem, we have
\begin{align*}
|(n+1)K_{X'}||_F &\lsgeq |K_{X'}+\roundup{n\pi^*(K_X)-\frac{2}{\delta}E_{m_1}-\frac{1}{\mu}\cdot (1-\frac{2}{\delta})E_F^{'}}||_F \notag \\
&\lsgeq |K_F+\roundup{n\pi^*(K_X)|_F-\frac{2}{\delta}E_{m_1}|_F-\frac{1}{\mu}\cdot (1-\frac{2}{\delta})E_F^{'}|_F}|
\end{align*}
since $$n\pi^*(K_X)|_F-\frac{2}{\delta}E_{m_1}|_F-\frac{1}{\mu}\cdot (1-\frac{2}{\delta})E_F^{'}|_F\equiv (n-\frac{2m_1}{\delta}-\frac{1}{\mu}(1-\frac{2}{\delta})\pi^*(K_X)+\frac{2}{\delta}S_1$$ is simple normal crossing (by our assumption), nef and big. The vanishing theorem on $F$ gives
\begin{align*}
&\ \ \ |K_F+\roundup{n\pi^*(K_X)|_F-\frac{2}{\delta}E_{m_1}|_F-\frac{1}{\mu}\cdot (1-\frac{2}{\delta})E_F^{'}|_F}||_C \\
&\lsgeq |K_F+\roundup{n\pi^*(K_X)|_F-\frac{2}{\delta}E_{m_1}|_F-\frac{1}{\mu}\cdot (1-\frac{2}{\delta})E_F^{'}|_F-H_{m_0}}||_C \\
&=|K_C+\tilde{D_n}|
\end{align*}
where $\tilde{D_n}=\roundup{n\pi^*(K_X)|_F-\frac{2}{\delta}E_{m_1}|_F-\frac{1}{\mu}\cdot (1-\frac{2}{\delta})E_F^{'}|_F}-H_{m_0}||_C$ with $$\mathrm{deg}\tilde{D_n}\ge(n-\frac{1}{\beta}-\frac{2m_1}{\delta}-\frac{1}{\mu}(1-\frac{2}{\delta}))\xi+2>2.$$ Thus $\varphi_{\llcorner \frac{1}{\beta}+\frac{2m_1}{\delta}+\frac{1}{\mu}(1-\frac{2}{\delta}) \lrcorner+2, X}$ is birational.

(2) If $|G_1|$ and $|G|$ are composed of the same pencil, then $C_1\equiv C$. By the Kawamata-Viehweg vanishing theorem, we have
\begin{equation}
|K_F+\roundup{n\pi^*(K_X)|_F}+C||_C=|K_C+D_n|\label{m1b2}
\end{equation}
where $\deg(D_n)=\deg(\roundup{n\pi^*(K_X)|_F}|_C)\geq n\xi$.
Whenever $n$ is large enough so that $\deg(D_n)>1$, the base point freeness theorem and Relations (\ref{m1b}) and (\ref{m1b2}) imply that
$$(n+m_1+1)\xi(m_0, |G|)\geq \roundup{n\xi(m_0, |G|)}+2$$
which  also directly implies $\xi(m_0, |G|)\geq \frac{2}{m_1+1}$.
Furthermore, whenever $\deg(D_n)>2$, we see that $\varphi_{n+m_1+1,X}$ is birational.
\end{proof}

\begin{prop}\label{X2} Let $X$ be a minimal projective 3-fold of general type with $p_g(X)>0$, $P_{m_0}\geq 2$.  Keep Assumption ($\mathcal{L}$).  Suppose that $M_{m_1}|_F\geq jC+C_1$ where $C_1$ is an irreducible moving curve on $F$ with $C_1\not\equiv C$ and $j>0$ an integer. Set $\delta_1=(C_1\cdot C)$. Suppose that $m_2$ is the smallest non-negative integer such that 
	\begin{enumerate}
		\item $|(m_2+1)K_{X'}|$ distinguishes different generic irreducible elements of $|M_{m_0}|$;
		\item $|(m_2+1)K_{X'}||_{F}$ distinguishes different generic irreducible elements of $|G|$.
	\end{enumerate}
	 Then
\begin{itemize}
\item[(i)] when $\delta_1\leq 2j$,
$\varphi_{n+1,X}$ is birational for all 
$$
n\ge \mathrm{max}\{m_2, \rounddown{\frac{1}{\xi(m_0, |G|)}(2-\frac{\delta_1}{j})+\frac{1}{\mu}+\frac{m_1}{j}}+1\}.
$$
\item[(ii)] when $\delta_1>2j$,
$\varphi_{n+1,X}$ is birational for all
$$
n\ge \mathrm{max}\{m_2, \rounddown{\frac{1}{\mu}+\frac{2m_1}{\delta_1}+\frac{1}{\beta}(1-\frac{2j}{\delta_1})}+1\}
$$
 %$\varphi_{\rounddown{\frac{1}{\mu}+\frac{m_1}{j}}+2,X}$ is birational.
\item[(iii)] For any positive integer $n$ satisfying $n>\frac{1}{\mu}+\frac{m_1}{j}$ and
$$(n-\frac{1}{\mu}-\frac{m_1}{j})\xi(m_0,|G|)+\frac{\delta_1}{j}>1,$$
one has
$$(n+1)\xi(m_0, |G|)\geq \roundup{(n-\frac{1}{\mu}-\frac{m_1}{j})\xi(m_0,|G|)+\frac{\delta_1}{j}}+2.$$
\end{itemize}
\end{prop}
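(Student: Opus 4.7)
The plan is to mirror the double vanishing strategy of Proposition \ref{X1}: apply Kawamata--Viehweg vanishing first to restrict $|(n+1)K_{X'}|$ from $X'$ to the fiber $F$, and then a second time on $F$ to restrict further to $C$, obtaining a sub-linear system of $|(n+1)K_{X'}||_C$ of the form $|K_C + D_n|$ with a controlled lower bound on $\deg D_n$. The hypothesis $n \geq m_2$ takes care of distinguishing distinct fibers $F$ and distinct generic elements $C$, so by Theorem \ref{key-birat} it remains only to ensure that $|(n+1)K_{X'}|$ separates two general points on the generic $C$, which reduces to $\deg D_n > 2$.

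For the first restriction, use $\pi^*(K_X) \simQ \mu F + E'_F$ and the snc assumption from Subsection \ref{setup}. Kawamata--Viehweg vanishing then yields
\[
|(n+1)K_{X'}||_F \;\succcurlyeq\; \bigl|K_F + \roundup{(n-\tfrac{1}{\mu})\pi^*(K_X)|_F}\bigr|
\]
whenever $n > 1/\mu$. For the second restriction, the hypothesis $M_{m_1}|_F \geq jC + C_1$ gives an effective $\bQ$-divisor $E$ on $F$ with $m_1\pi^*(K_X)|_F \simQ jC + C_1 + E$.

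In Parts (i) and (iii), rewrite this as $\tfrac{m_1}{j}\pi^*(K_X)|_F \simQ C + \tfrac{1}{j}C_1 + \tfrac{1}{j}E$; subtracting $C + \tfrac{1}{j}E$ from $(n-\tfrac{1}{\mu})\pi^*(K_X)|_F$ leaves the residual $(n - \tfrac{1}{\mu} - \tfrac{m_1}{j})\pi^*(K_X)|_F + \tfrac{1}{j}C_1$, which is nef and big when $n > \tfrac{1}{\mu} + \tfrac{m_1}{j}$. Vanishing on $F$ then produces a surjection onto $|K_C + D_n|$ with $\deg D_n \geq \roundup{(n-\tfrac{1}{\mu}-\tfrac{m_1}{j})\xi + \tfrac{\delta_1}{j}}$, the $\tfrac{\delta_1}{j}$ coming from $\tfrac{1}{j}C_1\cdot C$. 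Comparing with $\deg((n+1)K_{X'}|_C) = (n+1)\xi$ via the base-point-freeness argument of Subsection \ref{ss2} yields the inequality of (iii); the numerical bound in (i) forces $\deg D_n > 2$ and hence birationality through Theorem \ref{key-birat}.

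For Part (ii), with $\delta_1 > 2j$, the decomposition just used would be wasteful (it extracts $\tfrac{\delta_1}{j} > 2$ from $C_1\cdot C$). Instead, combine $\tfrac{2m_1}{\delta_1}\pi^*(K_X)|_F \simQ \tfrac{2j}{\delta_1}C + \tfrac{2}{\delta_1}C_1 + \tfrac{2}{\delta_1}E$ with $(1-\tfrac{2j}{\delta_1})$ copies of $\tfrac{1}{\beta}\pi^*(K_X)|_F \simQ C + H$ (the latter with $H$ effective, by definition of $\beta$) to obtain the efficient decomposition
\[
\bigl[\tfrac{2m_1}{\delta_1}+\tfrac{1}{\beta}(1-\tfrac{2j}{\delta_1})\bigr]\pi^*(K_X)|_F \simQ C + \tfrac{2}{\delta_1}C_1 + (\text{effective}),
\]
so that subtracting $C$ together with the effective piece from $(n-\tfrac{1}{\mu})\pi^*(K_X)|_F$ leaves the residual $[n-\tfrac{1}{\mu}-\tfrac{2m_1}{\delta_1}-\tfrac{1}{\beta}(1-\tfrac{2j}{\delta_1})]\pi^*(K_X)|_F + \tfrac{2}{\delta_1}C_1$. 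Under the bound stated in (ii) this is nef and big, and vanishing then produces $|K_C+D_n|$ with $\deg D_n \geq [n-\tfrac{1}{\mu}-\tfrac{2m_1}{\delta_1}-\tfrac{1}{\beta}(1-\tfrac{2j}{\delta_1})]\xi + 2 > 2$, whence birationality. The main technical hurdle is the standard snc bookkeeping and the rounding of $\bQ$-coefficients so that both applications of vanishing are legitimate and the degree estimates on $C$ survive; the dichotomy $\delta_1 \leq 2j$ vs.\ $\delta_1 > 2j$ is essential because only in the latter regime does the auxiliary $\tfrac{1}{\beta}$-contribution become genuinely necessary to reach the sharp coefficient $2$ on $C$ without overspending $\pi^*(K_X)|_F$.
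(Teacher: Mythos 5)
Your proposal is correct and follows essentially the same route as the paper's own proof: a double application of Kawamata--Viehweg vanishing with precisely the same decompositions, namely $\tfrac{1}{j}(jC+C_1+E)$ for parts (i) and (iii), and $\tfrac{2}{\delta_1}(jC+C_1+E)$ supplemented by $(1-\tfrac{2j}{\delta_1})$ copies of $\tfrac{1}{\beta}\pi^*(K_X)|_F\equiv C+H$ for part (ii), leading to the identical degree estimates on $C$. (In fact your degree bound in case (ii), $[n-\tfrac{1}{\mu}-\tfrac{2m_1}{\delta_1}-\tfrac{1}{\beta}(1-\tfrac{2j}{\delta_1})]\xi+2$, states correctly what the paper's displayed estimate there evidently intends.)
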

\begin{proof} Modulo further birational modifications, we may and do assume that $|M_{m_1}|$ is base point free.  By our assumption we may find two effective $\bQ$-divisors
$E_{m_1}'$ on $X'$ and $E_{m_1}''$ on $F$ such that
$$m_1\pi^*(K_X)\equiv M_{m_1}+E_{m_1}',$$
$$M_{m_1}|_F\equiv jC+C_1+E_{m_1}''.$$
Without lose of generality, we may assume that $\mu$ is rational.
Set $$n=\begin{cases}
 \mathrm{max}\{m_2, \rounddown{\frac{1}{\xi(m_0, |G|)}(2-\frac{\delta_1}{j})+\frac{1}{\mu}+\frac{m_1}{j}}+1\}, &\text{when}\  \delta_1\leq 2j;\\
\mathrm{max}\{m_2, \rounddown{\frac{1}{\mu}+\frac{2m_1}{\delta_1}+\frac{1}{\beta}(1-\frac{2j}{\delta_1})}+1\}, &\text{when}\  \delta_1>2j.
\end{cases}$$
For Item (i), since $n\pi^*(K_X)-F-\frac{1}{\mu}E_F'$ is nef and big (see \eqref{muS}, as $F=S$), the Kawamata-Viehweg vanishing theorem implies:
\begin{eqnarray}
|(n+1)K_{X'}||_F&\lsgeq& |K_{X'}+\roundup{n\pi^*(K_X)-\frac{1}{\mu}E_F'}||_F \notag\\
&=& |K_F+\roundup{n\pi^*(K_X)-\frac{1}{\mu}E_F'}|_F| \notag\\
&\lsgeq & |K_F+\roundup{Q_{m_0,m_1}}|
\label{x21}
\end{eqnarray}
where  \begin{eqnarray*}
Q_{m_0,m_1}&=&(n\pi^*(K_X)-\frac{1}{\mu}E_F')|_F-
\frac{1}{j}E'_{m_1}|_F-\frac{1}{j}E''_{m_1}\\
&\equiv& (n-\frac{1}{\mu}-\frac{m_1}{j})\pi^*(K_X)|_F+\frac{1}{j}C_1+C.
\end{eqnarray*}
 Since $Q_{m_0,m_1}-C$ is nef and big, the vanishing theorem implies
\begin{equation}
|K_F+\roundup{Q_{m_0,m_1}}||_C=|K_C+\roundup{Q_{m_0,m_1}-C}|_C|\label{x22}
\end{equation}
where
$$\deg(\roundup{Q_{m_0,m_1}-C}|_C\geq (n-\frac{1}{\mu}-\frac{m_1}{j})\xi(m_0,|G|)+\frac{\delta_1}{j}>2.$$
Clearly, since $p_g(X)>0$,  $|(n+1)K_{X'}|$ distinguishes deferent general $F$ and $|(n+1)K_{X'}||_F$ distinguishes different generic $C$. Combining both (\ref{x21}) and (\ref{x22}), we deduce the  birationality of $\varphi_{n+1,X}$.

Item (iii) follows directly from \eqref{x21} and \eqref{x22} since $|K_C+\roundup{Q_{m_0,m_1}-C}|_C|$ is base point free under the assumption.

We are left to treat (ii).   Since $n\pi^*(K_X)-F-\frac{1}{\mu}E_F'$ is nef and big (see \eqref{muS}, as $F=S$), the Kawamata-Viehweg vanishing theorem implies:
\begin{eqnarray}
|(n+1)K_{X'}||_F&\lsgeq& |K_{X'}+\roundup{n\pi^*(K_X)-\frac{1}{\mu}E_F'}||_F \notag\\
&=& |K_F+\roundup{n\pi^*(K_X)-\frac{1}{\mu}E_F'}|_F| \notag\\
&\lsgeq & |K_F+\roundup{\tilde{Q}_{m_0,m_1}}|
\label{x211}
\end{eqnarray}
where  \begin{eqnarray*}
\tilde{Q}_{m_0,m_1}&=&(n\pi^*(K_X)-\frac{1}{\mu}E_F')|_F-
\frac{2}{\delta_1}E'_{m_1}|_F-\frac{2}{\delta_1}E''_{m_1}-(1-\frac{2j}{\delta_1})H_{m_0}\\
&\equiv& (n-\frac{1}{\mu}-\frac{2m_1}{\delta_1}-\frac{1}{\beta}\cdot(1-\frac{2j}{\delta_1}))\pi^*(K_X)|_F+\frac{2}{\delta_1}C_1+C.
\end{eqnarray*}
 Since $\tilde{Q}_{m_0,m_1}-C$ is nef and big, the vanishing theorem implies
\begin{equation}
|K_F+\roundup{\tilde{Q}_{m_0,m_1}}||_C=|K_C+\roundup{\tilde{Q}_{m_0,m_1}-C}|_C|\label{x222}
\end{equation}
where
$$\deg(\roundup{\tilde{Q}_{m_0,m_1}-C}|_C\geq (n-\frac{1}{\mu}-\frac{m_1}{j})\xi(m_0,|G|)+\frac{\delta_1}{j}>2.$$
Clearly, since $p_g(X)>0$,  $|(n+1)K_{X'}|$ distinguishes deferent general $F$ and $|(n+1)K_{X'}||_F$ distinguishes different generic $C$. Combining (\ref{x211}) and (\ref{x222}), we get the birationality of $\varphi_{n+1,X}$.
\end{proof}

%%%%%%%%%%%%%%%%%%%%
\section{\bf Minimal 3-folds of general type with $p_g=1$}
%%%%%%%%%%%%%%%%%%%%%

In this section, we always assume that $p_g(X)=1$.  By the proof of \cite[Corollary 4.10]{EXP3}, we know that $X$ belongs to either of the types: (1) $P_4=1$ and $P_5\geq 3$; (2) $P_4\geq 2$.

\subsection{The case $P_4=1$ and $P_5\geq 3$}\

 As explained in Subsection \ref{basket}, we will utilize those formulae and inequalities in \cite[Section 3]{EXP1} to classify the weighted basket $\mathbb{B}(X)$.  %Clearly we have $P_2=P_3=1$, so $n_{1,4}^0\geq 0$ implies $\chi(\OO_X)\geq P_5\geq 3$.

\begin{prop} If $p_g(X)=P_4(X)=1$ and $|5K_X|$ is composed of a pencil, then $\varphi_{15,X}$ is birational.
\end{prop}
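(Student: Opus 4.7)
Setting up the standard notation of \S\ref{setup} with $m_0 = 5$, the hypothesis that $|5K_X|$ is composed of a pencil puts us in the case $d_5 = 1$, giving a fibration $f\colon X' \to \Gamma$ whose general fiber $F$ is a smooth surface of general type. Since $P_5 \geq 3$ in this subsection, $a := \deg f_{*}\OO_{X'}(M_5) \geq 2$, hence $\zeta = a \geq 2$ and $\mu \geq a/5 \geq 2/5$. Estimate (\ref{cri}) then yields
$$
\pi^{*}(K_X)|_F \; \geq \; \tfrac{a}{5+a}\,\sigma^{*}(K_{F_0}) \; \geq \; \tfrac{2}{7}\,\sigma^{*}(K_{F_0}).
$$
The plan is to apply Theorem \ref{key-birat} for $m = 15$. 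Distinguishing property (i) is automatic from Lemma \ref{s1} since $15 \geq m_0 + 2$, so the real work lies in choosing $|G|$ on $F$ and verifying property (ii) and $\alpha(15) > 2$.

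I would split according to whether $F$ is a $(1,2)$-surface. If $F$ is not a $(1,2)$-surface, take $|G|$ to be the base point free movable part (after a further blow-up of $F$) of $|\sigma^{*}(K_{F_0})|$. Lemma \ref{ll3} then ensures $(\sigma^{*}(K_{F_0}) \cdot C) \geq 2$, so by the estimate above $\beta \geq 2/7$ and the basic inequality (\ref{kieq2}) gives $\xi \geq 4/7$. Thus
$$
\alpha(15) \; \geq \; \bigl(14 - \tfrac{5}{2} - \tfrac{7}{2}\bigr) \cdot \tfrac{4}{7} \; = \; \tfrac{32}{7} \; > \; 2,
$$
and property (ii) follows from Lemma \ref{s2}(1), since $15 > 1 + \tfrac{1}{\mu} + \tfrac{2}{\beta}$ comfortably. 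If $F$ is a $(1,2)$-surface, take $|G|$ to be the base point free model of the genus-$2$ pencil $\mathrm{Mov}|K_F|$. Then $C$ has genus $2$, $(\sigma^{*}(K_{F_0}) \cdot C) = 1$, so $\beta \geq 2/7$ and (\ref{kieq2}) yields $\xi \geq 2/7$, whence $\alpha(15) \geq 8 \cdot \tfrac{2}{7} = \tfrac{16}{7} > 2$.

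In the $(1,2)$-surface sub-case the delicate point is property (ii), since $|G|$ is composed of a pencil: Lemma \ref{s2}(2) must be used, which requires producing an integer $m_1$ with $M_{m_1}|_F \geq G$ and $15 > m_0/\zeta + m_1 + 1$. I would extract such an $m_1$ from Theorem \ref{KaE}: with $\mu$ rationalized, $|n(\mu+1)K_{X'}||_F \supseteq |n\mu K_F|$ for suitable integers $n$, so a modest $n$ already makes the restriction contain a multiple of $G$; this yields a bounded $m_1$, well within the budget $15 - m_0/\zeta - 1 \geq 11.5$. This step, controlling $m_1$ and hence property (ii) for the genus-$2$ pencil on the $(1,2)$-fiber, is the only place where careful bookkeeping is needed; once it is in place, Theorem \ref{key-birat} delivers birationality of $\varphi_{15,X}$.
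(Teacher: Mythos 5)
There is a genuine gap in your treatment of the case where $F$ is \emph{not} a $(1,2)$-surface. You take $|G|=\Mov|\sigma^{*}(K_{F_0})|$, but the hypotheses only give $p_g(F)>0$ (inherited from $p_g(X)>0$), not $p_g(F)\geq 2$. If $p_g(F)=1$ --- and nothing in this situation excludes that --- then $|K_F|$ has a single member, its movable part has no generic irreducible element $C$, and the whole $\beta$--$\xi$--$\alpha$ apparatus of \S\ref{ss2} has no input: your Case-1 computation never gets started. This is precisely why the paper does not argue through Theorem \ref{key-birat} here. Instead, after restricting $|mK_{X'}|$ to $F$ by vanishing, it writes
$\big((m-1)\pi^{*}(K_X)|_F-\tfrac{1}{\zeta}E'|_F\big)-\tfrac{3\zeta+15}{\zeta}E''_F\equiv 3\sigma^{*}(K_{F_0})+a_{m,\zeta}\pi^{*}(K_X)|_F$ with $a_{m,\zeta}>0$ for $m\geq 15$, and applies the surface criterion Lemma \ref{ll1} to $L=3\sigma^{*}(K_{F_0})+a_{m,\zeta}\pi^{*}(K_X)|_F$ (Lemma \ref{ll3} gives $(L\cdot C_x)\geq 6$ and $L^2\geq 9>8$), so that $|K_F+\roundup{L}|$ is already birational on $F$; combined with Lemma \ref{s1} this yields birationality of $\varphi_{15,X}$ with no pencil on $F$ required. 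Your argument can be repaired when $p_g(F)=1$ (e.g.\ switch to $|G|=\Mov|2K_F|$, which gives $\xi\geq \tfrac47$, $\beta\geq\tfrac17$ and $\alpha(15)>2$, but then Lemma \ref{s2}(1) fails since $1+\tfrac1\mu+\tfrac2\beta>15$, so you must invoke Lemma \ref{s2}(2) with $m_1=7$ coming from the extension theorem), but as written this case is simply not covered.

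Two smaller points. In the $(1,2)$-surface case, your claim that Lemma \ref{s2}(2) ``must be used'' because $|G|$ is a pencil is mistaken: condition (1) of Lemma \ref{s2} is purely numerical and its proof handles pencils, so $m\geq 11>\tfrac1\mu+\tfrac2\beta+1$ already gives property (ii) --- this is what the paper does, and it makes your Kawamata-extension bookkeeping unnecessary (note also that this bookkeeping only produces a bounded $m_1$ after replacing $\mu$ by a fixed rational value such as $\tfrac25$, a point your sketch glosses over). Finally, your bounds $\xi\geq\tfrac47$ and $\xi\geq\tfrac27$ do not come from Inequality (\ref{kieq2}) --- in the non-$(1,2)$ case that inequality only yields $\tfrac27$ unless you first bound $g(C)\geq 3$; they follow instead from the pullback estimate $\pi^{*}(K_X)|_F\geq\tfrac27\sigma^{*}(K_{F_0})$ paired with Lemma \ref{ll3}, respectively with $(\sigma^{*}(K_{F_0})\cdot C)=1$.
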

\begin{proof} We may take $m_0=5$ and use the set up in \ref{setup}.  Pick a general fiber $F$ of the induced fibration $f:X'\lrw \Gamma$ from $\varphi_{5}$.  Clearly we have $p_g(F)>0$ and $\zeta\geq P_5(X)-1\geq 2$.  By \eqref{cri}, we have
\begin{equation}\pi^*(K_X)|_F\sim_{\bQ} \frac{\zeta}{\zeta+5}\sigma^*(K_{F_0})+E''_F\label{12i}\end{equation}
for an effective $\bQ$-divisor $E''_F$ on $F$ where $F_0$ is the minimal model of $F$.

For a positive integer $m\geq 7$,  Lemma \ref{s1} says that $|mK_{X'}|$ distinguishes different general $F$.  By Kawamata-Viehweg vanishing theorem, we have
\begin{eqnarray}
|mK_{X'}||_F&\lsgeq& |K_{X'}+\roundup{(m-1)\pi^*(K_X)-\frac{1}{\zeta}E'}||_F\notag\\
&\lsgeq&|K_F+\roundup{(m-1)\pi^*(K_X)|_F-\frac{1}{\zeta}E'|_F}|.\label{p412}
\end{eqnarray}
Noting that $|M_5|$ is composed of a pencil, we have
$$(m-1)\pi^*(K_X)|_F-\frac{1}{\zeta}E'|_F\equiv (m-1-\frac{m_0}{\zeta})\pi^*(K_X)|_F.$$
\medskip

{\bf Case 1}.  $F$ is a not a $(1,2)$-surface.

We have
\begin{eqnarray*}
&&\big((m-1)\pi^*(K_X)|_F-\frac{1}{\zeta}E'|_F\big)-\frac{3\zeta+15}{\zeta}E''_F\notag\\
&\equiv& 3\sigma^*(K_{F_0})+a_{m,\zeta}\pi^*(K_X)|_F
\end{eqnarray*}
where $a_{m,\zeta}=m-1-\frac{m_0+3\zeta+15}{\zeta}>0$ whenever $m\geq 15$.
By Lamma \ref{ll1} and Lemma \ref{ll3}, we see that
$$|K_F+\roundup{(m-1)\pi^*(K_X)|_F-\frac{1}{\zeta}E'|_F-\frac{3\zeta+15}{\zeta}E''_F}|$$
gives a birational map.  Hence we have proved that $\varphi_{15}$ is birational onto its image.
\medskip

{\bf Case 2}.  $F$ is a $(1,2)$-surface.

We take $|G|=\Mov|K_F|$.  We have $\beta\geq \frac{2}{7}$ and $\xi\geq \frac{2}{7}(\sigma^*(K_{F_0})\cdot C)=\frac{2}{7}$ by (\ref{12i}).  By Lemma \ref{s2}(1), when $m\geq 11$, $|mK_{X'}||_F$ distinguishes different generic irreducible elements of $|G|$. Since
$$\alpha(15)=(14-\frac{m_0}{\zeta}-\frac{1}{\beta})\xi\geq \frac{16}{7}>2, $$
$\varphi_{15}$ is birational by Theorem \ref{key-birat}.
\end{proof}

Now we discuss the case when $|5K_X|$ is not composed of a pencil.
\medskip

\noindent {\bf Setting ($\aleph$-1).}
{\it Take two different general members $S_5$,  $S_5'\in |M_5|$. Denote by $\Lambda_5$ the 1-dimensional sub-pencil, of $|M_5|$, generated by $S_5$ and $S_5'$.  Modulo a further birational modification, we may and do assume that both $|M_5|$ and the moving part of $\Lambda_5$ are base point free. Then one gets an induced fibration $f=f_{\Lambda_5}:X'\lrw \bP^1$ whose general fiber is denoted as $F$, which has the same birational invariants as that of a general member of $|M_5|$.  In particular, $p_g(F)=p_g(S_5)\geq 2$.
We may take $m_0=5$ and $|G|=|M_5|_F|$. Pick a generic irreducible element $C_5$ in $|G|$. Clearly $\beta\geq \frac{1}{5}$.  On the other hand, we have
\begin{equation}\pi^*(K_X)|_F\geq \frac{1}{6}\sigma^*(K_{F_0})\label{ib}\end{equation}
by \eqref{cri} as $\mu\geq \frac{1}{5}$.
}

\begin{prop}\label{C5} Assume that $p_g(X)=P_4(X)=1$ and that $|5K_X|$ is not composed of a pencil. Keep the setting in {\bf ($\aleph$-1)}. If $g(C_5)\geq 3$, then $\varphi_{16,X}$ is birational.
\end{prop}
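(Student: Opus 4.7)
The plan is to apply Theorem~\ref{key-birat} with $m=16$ in the sub-pencil setup of~($\aleph$-1): take $m_0=5$, let $|M|$ be the (resolved) moving part of $\Lambda_5$ so that its generic irreducible element is a fiber $F$, and set $|G|=|M_5|_F|$ with generic irreducible element $C_5$. From~($\aleph$-1) we have $\mu\ge 1/5$ and $\beta\ge 1/5$, hence $\alpha(16)=(15-1/\mu-1/\beta)\xi\ge 5\xi$. Birationality of $\varphi_{16,X}$ will follow once I verify the two distinguishing conditions of Theorem~\ref{key-birat} and establish $\xi>2/5$.

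The distinguishing conditions are routine. Condition~(i)---that $|16K_{X'}|$ distinguishes different fibers $F$---follows from Lemma~\ref{s1} applied via the sub-linear-system variant of~\ref{var}, since $16\ge m_0+2$. Condition~(ii)---that $|16K_{X'}||_F$ distinguishes different $C_5$---follows from Lemma~\ref{s2}(2) with $m_1=5$ (as $M_5|_F\ge G$ by definition) and $\zeta=1$, since $16>m_0/\zeta+m_1+1=11$. So the entire argument reduces to bounding $\xi$.

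The main content is the estimate on $\xi$. Since $g(C_5)\ge 3$, $\deg(K_{C_5})\ge 4$, and inequality~\eqref{kieq2} yields the crude initial bound $\xi\ge 4/(1+5+5)=4/11$. This alone is insufficient (it only gives $5\cdot 4/11=20/11<2$), so I would bootstrap via~\eqref{kieq1}: at $m=14$ one has $\alpha(14)\ge 3\cdot 4/11=12/11>1$, so
$$14\xi\;\ge\;\deg(K_{C_5})+\alpha_0(14)\;\ge\;4+\lceil 12/11\rceil\;=\;6,$$
giving the improved bound $\xi\ge 3/7$. Re-inserting produces $\alpha(16)\ge 5\cdot 3/7=15/7>2$, and Theorem~\ref{key-birat} concludes. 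The main obstacle is the tightness of the thresholds: exactly one application of~\eqref{kieq1} is available (since $\alpha(13)\le 8/11<1$ prevents invoking it at $m=13$), and the resulting $\xi\ge 3/7$ is precisely what is needed to push $\alpha(16)$ past $2$. The hypothesis $g(C_5)\ge 3$ is essential to get started, as $g(C_5)=2$ would only give $\xi\ge 2/11$ initially and leave a gap that this scheme cannot close; that case must be handled by a separate argument.
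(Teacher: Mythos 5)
Your proposal is correct and follows essentially the same route as the paper: both start from $\xi\ge 4/11$ via Inequality (\ref{kieq2}) using $g(C_5)\ge 3$, $\mu\ge\frac15$, $\beta\ge\frac15$, bootstrap once through Inequality (\ref{kieq1}) at $m=14$ (where $\alpha(14)\ge\frac{12}{11}>1$) to get $\xi\ge\frac37$, and conclude from $\alpha(16)\ge\frac{15}{7}>2$ together with Lemma \ref{s1}, Lemma \ref{s2} and Theorem \ref{key-birat}. Your write-up is in fact slightly more explicit than the paper's, since you verify the distinguishing conditions by citing Lemma \ref{s2}(2) with $m_1=5$ (noting that clause (1) would just fail at $m=16$ when $\mu=\beta=\frac15$), which the paper leaves implicit.
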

\begin{proof} We have $m_0=5$, $\Lambda_5\subset |5K_{X'}|$, $\zeta=1$ and $\beta\geq \frac{1}{5}$. Since $g(C_5)\geq 3$, we have
$\xi\geq \frac{4}{11}$ by Subsection \ref{var} and Inequality (\ref{kieq2}).  Take $m=14$. Then, since $\alpha(14)\geq \frac{12}{11}>1$, one has $\xi\geq \frac{3}{7}$ by Inequality (\ref{kieq1}). Finally, since $\alpha(16)\geq \frac{15}{7}>2$,  $\varphi_{16,X}$ is birational by Lemma \ref{s1}, Lemma \ref{s2} and Theorem \ref{key-birat}.
\end{proof}

\begin{prop}\label{C52}  Assume that $p_g(X)=P_4(X)=1$ and that $|5K_X|$ is not composed of a pencil. Keep the setting in ($\aleph$-1). If $g(C_5)=2$, then $\varphi_{16,X}$ is birational.
\end{prop}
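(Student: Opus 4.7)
The plan is to refine the estimates on $\mu$, $\beta$, $\xi$ beyond what was used for Proposition \ref{C5}, since with $g(C_5)=2$ the naive bound from (\ref{kieq2}) is only $\xi\geq \frac{2}{11}$, yielding $\alpha(16)\geq \frac{10}{11}<2$ so that Theorem \ref{key-birat} cannot be applied directly. The approach splits on the birational type of the general fiber $F$ of $f_{\Lambda_5}:X'\to\bP^1$.

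First I would dispose of the case where $F$ is not a $(1,2)$-surface. Since $C_5\in |G|$ is a moving curve on $F$, Lemma \ref{ll2} forces $(\sigma^*(K_{F_0})\cdot C_5)\geq 2$, and combining with the bound $\pi^*(K_X)|_F\geq \frac{1}{6}\sigma^*(K_{F_0})$ from (\ref{cri}) gives at once $\xi\geq \frac{1}{3}$. This is still not enough for $\alpha(16)>2$, so I would bootstrap: either improve $\beta$ by showing $\sigma^*(K_{F_0})\geq aC_5$ for some $a\geq 1$ (using the non-$(1,2)$ structure of the canonical map of $F$), or apply Proposition \ref{-jl} and Proposition \ref{k1} with suitable $n_1,j_1,l_1$ drawn from $|M_n|$ for $n\approx 10$--$15$ to upgrade $\pi^*(K_X)|_F\geq \lambda \sigma^*(K_{F_0})$ beyond $\lambda=\frac{1}{6}$. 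Iterating the birationality principle (via (\ref{kieq1}) at well-chosen $m$) should push $\xi$ above $\frac{2}{5}$.

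In the remaining case $F$ is a $(1,2)$-surface, I would set $|G|=\Mov|K_F|$ (so the generic $C$ has genus $2$ automatically), making Assumption $(\pounds)$ available. Now Propositions \ref{X1} and \ref{X2} are the natural tools. Concretely, I would try to produce a decomposition $M_{m_1}\geq F+S_1$ for some moderate $m_1$ (say $m_1\in\{6,\dots,11\}$) by exploiting Proposition \ref{-jl}: since $p_g(X)=P_4(X)=1$ is highly restrictive, $u_{m_1,0}$ is bounded, and once $P_{m_1}>u_{m_1,0}$ one gets $M_{m_1}\geq F$; one iterates to obtain $M_{m_1}\geq F+S_1$ where $|S_1|$ is nontrivial (the subpencil $\Lambda_5$ inside the non-pencil $|M_5|$ supplies the second moving summand). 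Depending on whether $|S_1|_F|$ and $|G|$ are composed of the same pencil, apply Proposition \ref{X1}(1.2) or (2.2) with $n=15$, checking that the numerical threshold is met; failing that, switch to Proposition \ref{X2} with the decomposition $M_{m_1}|_F\geq jC+C_1$.

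The main obstacle will be producing the actual decomposition $M_{m_1}\geq F+S_1$ (respectively $M_{m_1}|_F\geq jC+C_1$) when $m_1$ is small, because $p_g(X)=P_4(X)=1$ forces a slow growth of $P_{m_1}$ for small $m_1$ and one must keep track of $u_{m_1,-j}$ carefully via Proposition \ref{-jl}. A secondary delicate point is the $(1,2)$-subcase where $|S_1|_F|$ and $|K_F|$ are composed of the same pencil: then Proposition \ref{X1}(2) yields only $\xi(m_0,|G|)\geq \frac{2}{m_1+1}$, and one must choose $m_1$ small enough that the resulting $n+m_1+1\leq 16$ still satisfies $n\xi(m_0,|G|)>2$. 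Completing these estimates and ruling out all exceptional baskets (so that none of the elements of $\mathbb{S}_1$ arise in this sub-regime) yields the birationality of $\varphi_{16,X}$.
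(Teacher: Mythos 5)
There is a genuine gap, and it sits exactly where you defer the work. In your main case ($F$ not a $(1,2)$-surface) you correctly get $\xi\geq\frac{1}{3}$ from Lemma \ref{ll2} and \eqref{cri}, but the bootstrap you then propose demonstrably stalls: with $\mu,\beta\geq\frac{1}{5}$ and $\xi\geq\frac{1}{3}$, Inequality \eqref{kieq1} at $m=15$ gives only $15\xi\geq 2+\roundup{4\xi}=4$, i.e.\ $\xi\geq\frac{4}{15}$, at $m=16$ only $\xi\geq\frac{1}{4}$, and similarly for larger $m$ --- always weaker than the bound you already have, so iterating \eqref{kieq1} can never push $\xi$ past $\frac{2}{5}$. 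Improving $\beta$ or $(\sigma^*(K_{F_0})\cdot C_5)$ instead requires the one input your plan never secures and which is the engine of the paper's proof: since $P_4=1$ forces $P_5\geq 3$, the basket inequality $n^0_{1,4}\geq 0$ of \cite[(3.6)]{EXP1} yields $\chi(\OO_X)\geq P_5\geq 3$, hence $q(X)=0$ and $h^2(\OO_{X'})\geq 3$, so that when $q(F)=0$ one gets $p_g(F)\geq h^1(f_*\omega_{X'})+1\geq 4$. It is this large canonical system on $F$ that feeds the paper's restriction maps $H^0(F,\sigma^*(K_{F_0}))\to H^0(C_5,\cdot)$ and $H^0(F,2\sigma^*(K_{F_0})-iC_5)\to H^0(C_5,\cdot)$, improving $\beta$ and $\xi$ case by case; and when $q(F)>0$ the paper finishes with Debarre's inequality $K_{F_0}^2\geq 2p_g(F)$, Xiao's result forcing $q(F)=1$ for a genus-two pencil, and Horikawa's classification. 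Nothing in your proposal replaces this.

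Your case division is also misdirected. Once $\chi(\OO_X)\geq 3$ is in hand, $F$ can never be a $(1,2)$-surface here: if $q(F)=0$ then $p_g(F)\geq 4$, while $q(F)>0$ is incompatible with a $(1,2)$-surface (which has $q=0$). So the bulk of your proposal treats a vacuous case --- and even there it rests on decompositions $M_{m_1}\geq F+S_1$ that you admit you cannot produce, since controlling $P_{m_1}$ and $u_{m_1,-j}$ again requires the basket analysis you omit. Note moreover that Propositions \ref{-jl}, \ref{k1}, \ref{X1}, \ref{X2} are stated under Assumption ($\pounds$), i.e.\ for a fibration induced by $|M_{m_0}|$ with $d_{m_0}=1$ and $(1,2)$-surface fibers; in setting ($\aleph$-1) one has $d_5\geq 2$ and $f$ comes from the sub-pencil $\Lambda_5$, so these tools are not available as stated (your appeal to Propositions \ref{-jl} and \ref{k1} in the non-$(1,2)$ case is inapplicable on its face). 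Finally, no exceptional baskets need to be excluded in this proposition: the exceptions ${\mathbb S}_1$ of Theorem \ref{main1} arise only in the $P_4\geq 2$ analysis, and the paper's Theorem \ref{t1} gives birationality of $\varphi_{16,X}$ outright when $p_g=P_4=1$.
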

\begin{proof}  By \cite[(3.6)]{EXP1}, $n_{1,4}^0\geq 0$ implies that $\chi(\OO_X)\geq P_5\geq 3$.   We will discuss the two cases separately: $q(F)=0$ or $q(F)>0$.
\medskip

{\bf Case 1.} $q(F)=0$.

With the fibration $f:X'\lrw \bP^1$,
we have $q(X)=q(X')=0$ and $h^2(\OO_X)=\chi(\OO_X)$. Since $q(F)=0$, one has 
$R^1f_*\omega_{X'}=0$ and hence
$$h^1(\bP^1, f_*\omega_{X'})=h^2(\OO_{X'})=\chi(\OO_X)\geq 3.$$
Since $f_*\omega_{X'/\mathbb{P}^1}$ is a nef vector bundle of rank $p_g(F)$, we may write
$$
f_*\omega_{X'}=\oplus_{i=1}^{p_g(F)}\mathcal{O}_{\mathbb{P}^1}(a_i),
$$
where $a_i\ge -2$ for any $1\le i\le p_g(F)$. Since $p_g(X)=1$, there is an $i_0$ such that $a_{i_0}=0$. We deduce that 
$$
h^1(\mathbb{P}^1, f_*\omega_{X'})=\sum_{i\neq i_0}h^1(\mathbb{P}^1,\mathcal{O}_{\mathbb{P}^1}(a_i))\le p_g(F)-1.
$$
 Thus we have $p_g(F)\ge 4$.

\noindent{\bf Subcase (1-i)}. $|K_F|$ is not composed of a pencil.

We consider the natural restriction map
$$\theta: H^0(F, \sigma^*(K_{F_0}))\lrw H^0(C_5, \sigma^*(K_{F_0})|_{C_5}).$$

When $\dim_k(\text{Im}(\theta))\geq 3$, then we have $\deg(\sigma^*(K_{F_0})|_{C_5})\geq 4$ by
Riemann-Roch and the Clifford theorem on $C_5$. Hence
$$\xi\geq \frac{1}{6}(\sigma^*(K_{F_0})\cdot C_5)\geq \frac{2}{3}.$$
Since $\alpha(15)\geq \frac{8}{3}>2$, $\varphi_{15,X}$ is birational by Lemma \ref{s1}, Lemma \ref{s2} and Theorem \ref{key-birat}.

When $\dim_k(\text{Im}(\theta))\leq 2$, we naturally have
$$\sigma^*(K_{F_0})\geq C_5+C'$$
where $C'$ is a generic irreducible element in $\Mov|\sigma^*(K_{F_0})-C_5|$. Suppose $C_5\equiv C'$. Then $\pi^*(K_X)|_F\geq \frac{1}{3}C_5$
which means $\beta\geq \frac{1}{3}$.  Since $$\xi\geq \frac{1}{6}(\sigma^*(K_{F_0})\cdot C_5)\geq \frac{1}{3}$$ by Lemma \ref{ll2}, we have
$\alpha(16)\geq \frac{7}{3}>2.$
Hence $\varphi_{16,X}$ is birational for the similar reason. Suppose that $C_5$ and $C'$ are not in the same curve family, in particular, $C_5\not\equiv C'$. Then $(C_5\cdot C')\geq 2$ since $|C_5|$ is moving on $F$. By the vanishing theorem, we have
\begin{eqnarray*}
|13K_{X'}||_F
&\lsgeq& |K_{X'}+\roundup{7\pi^*(K_X)}+F||_F\\
&\lsgeq&|K_F+\roundup{Q_6}+C'+C_5|
\end{eqnarray*}
where $Q_6\equiv \pi^*(K_X)|_F$ is nef and big.  By Lemma \ref{s1} and Lamma \ref{s2}, $|13K_{X'}|$ distinguishes different general fiber $F$ and different generic elements $C_5$. Using the vanishing theorem once more, we have
$$|K_F+\roundup{Q_6}+C'+C_5||_{C_5}=|K_{C_5}+D_5|$$
where $\deg(D_5)=((\roundup{Q_6}+C')\cdot C_5)>2$. Thus $\varphi_{13,X}$ is birational.
\medskip

\noindent{\bf Subcase (1-ii)}.  $|K_F|$ is composed of a pencil.

Modulo further birational modifications, we may and do assume that  $\Mov|K_F|$ is base point free. Since $q(F)=0$, $\Mov|K_F|$ is composed of a rational pencil. Let $\tilde{C}$ be a generic irreducible element of $\Mov|K_F|$. Since $p_g(F)\ge 4$, we have $\sigma^*(K_{F_0})\ge 3\tilde{C}$.

If $C_5$ is not numerically equivalent to $\tilde{C}$, then $h^0(F, \tilde{C}-C_5)=0$ and then 
$h^0(C_5, \tilde{C}|_{C_5})\geq 2$. We have $(\tilde{C}\cdot C_5)\ge 2$ by the Riemann-Roch on $C_5$. Thus we have
$$\xi\geq \frac{1}{6}(\sigma^*(K_{F_0})\cdot C_5)\geq \frac{1}{2}(C\cdot C_5)\geq 1.$$
In this case we have seen that $\beta\geq \frac{1}{5}$.

Otherwise, we have $\sigma^*(K_{F_0})\geq 3C_5$ and so $\beta\geq \frac{1}{2}$. Also $\xi\geq
\frac{1}{6}(\sigma^*(K_{F_0})\cdot C_5)\geq \frac{1}{3}$ by Lemma \ref{ll2}.  In both cases, one has
$\alpha(15)\geq \frac{7}{3}>2$. Thus $\varphi_{15,X}$ is birational by Lemma \ref{s1}, Lemma \ref{s2} and Theorem \ref{key-birat}.
\medskip

{\bf Case 2}.  $q(F)>0$. \

By Debarre \cite{D}, one has $K_{F_0}^2\geq 2p_g(F)\geq 4$.  Assume that  $|G|$ is not composed of a pencil.  Then we have
$$\xi\geq \frac{1}{6}(\sigma^*(K_{F_0})\cdot C_5)\geq \frac{1}{2}$$
since   $(\sigma^*(K_{F_0})\cdot C_5)\geq \sqrt{8}>2$.  Then it follows that $\alpha(16)>2$.  When $|G|$ is composed of  an irrational pencil, we have $G\geq 2C_5$ and so
$\beta\geq \frac{2}{5}$. Note that one has $\xi\geq \frac{1}{3}$ and so that  $\alpha(15)>2$.  As a conclusion, for above two cases, $\varphi_{16,X}$ is birational by Lemma \ref{s1}, Lemma \ref{s2} and Theorem \ref{key-birat}.

{}From now on,  we may and do assume that $|G|$ is composed of a rational pencil.
Since $F$ possess  a genus 2 fibration onto $\bP^1$ and $F$ is of general type, we see $q(F)=1$ (see Xiao \cite[Theorem 2.4.10]{X91}).
\medskip

\noindent{\bf Subcase (2-i)}.  $K_{F_0}^2\geq 6$.

As $2=(K_{F}\cdot C_5)\geq \sqrt{K_{F_0}^2\cdot \sigma_*(C_5)^2}$, we see $\sigma_*(C_5)^2=0$.
%which simply implies that $|K_{F_0}|$ is composed of a free pencil of genus 2 and $\sigma_*(C_5)$ lies in $\Mov|K_{F_0}|$.
By \cite[Lemma 2.7]{C14}, we see
\begin{equation}\sigma^*(K_{F_0})\geq \frac{3}{2}C_5\label{ib2}\end{equation}
and so $\beta\geq \frac{1}{4}$ according to (\ref{ib}).

Let us consider the natural map:
$$H^0(F, 2\sigma^*(K_{F_0})-iC_5)\overset{\rho_{-i}}\lrw H^0(C_5,
2\sigma^*(K_{F_0})|_{C_5})$$
for $0\leq i\leq 3$.
Note that $h^0(F,2\sigma^*(K_{F_0}))=P_2(F)\geq 8$ and
$$h^0(C_5,2\sigma^*(K_{F_0})|_{C_5})=h^0(\sigma_*(C_5), 2K_{F_0}|_{\sigma_*(C_5)})=3.$$
We naturally have  $h^0(F, 2\sigma^*(K_{F_0})-C_5)\geq 5.$
\smallskip

\noindent{\bf (2-i-1)}. If $\dim\text{Im}(\rho_{-1})=3$, we have
$$2\sigma^*(K_{F_0})\geq C_5+C_{-1}$$
where $|C_{-1}|=\Mov|2\sigma^*(K_{F_0})-C_5|$ and $(C_{-1}\cdot C_5)=\deg(C_{-1}|_{C_5})\geq 4$. By (\ref{ib}) and (\ref{ib2}), we have
\begin{eqnarray}
8\pi^*(K_X)|_F&\geq&\frac{4}{3}\sigma^*(K_{F_0})\geq \sigma^*(K_{F_0})+\frac{1}{3}\cdot\frac{3}{2}C_5\notag\\
&\geq& C_5+\frac{1}{2}C_{-1}.\label{ib3}
\end{eqnarray}
Applying Kawamata-Viehweg vanishing theorem, one gets
\begin{eqnarray}
|15K_{X'}||_F&\lsgeq&|K_{X'}+\roundup{9\pi^*(K_X)}+F||_F\notag\\
&\lsgeq& |K_F+\roundup{Q_{-1}}+C_5|\label{ib4}
\end{eqnarray}
where  $Q_{-1}\equiv\pi^*(K_X)|_F+\frac{1}{2}C_{-1}$.
Since
$$((\pi^*(K_X)|_F+\frac{1}{2}C_{-1})\cdot C_5)\geq (\pi^*(K_X)|_F\cdot C_5)+\frac{1}{2}(C_{-1}\cdot C_5)>2,$$
we see that $|K_F+\roundup{Q_{-1}}+C_5||_{C_5}$ gives a birational map. By Lemma \ref{s1} and Lemma \ref{s2}, we have seen that $\varphi_{15,X}$ is birational.
\smallskip

\noindent{\bf (2-i-2)}. If $\dim\text{Im}(\rho_{-1})\leq 2$ and $\dim\text{Im}(\rho_{-2})=2$, we have
$$2\sigma^*(K_{F_0})\geq 2C_5+C_{-2}$$
where $|C_{-2}|=\Mov|2\sigma^*(K_{F_0})-2C_5|$ and $(C_{-2}\cdot C_5)\geq 2$.
By the vanishing theorem, we have
\begin{eqnarray*}
|16K_{X'}||_F&\lsgeq& |K_{X'}+\roundup{10\pi^*(K_X)}+F||_F\\
&\lsgeq&|K_F+\roundup{Q_{-2}}+C_5|
\end{eqnarray*}
where $Q_{-2}\equiv 4\pi^*(K_X)|_F+\frac{1}{2}C_{-2}$ is nef and big.  Since $(Q_{-2}\cdot C_5)\geq \frac{7}{3}>2$, we see that $|K_F+\roundup{Q_{-2}}+C_5||_{C_5}$ gives a birational map. Thus $\varphi_{16,X}$ is birational by Lemma \ref{s1} and Lemma \ref{s2}.
\smallskip

\noindent{\bf (2-i-3)}.   If $\dim\text{Im}(\rho_{-1})\leq 2$ and $\dim\text{Im}(\rho_{-2})=1$, we have
$$2\sigma^*(K_{F_0})\geq 4C_5$$
since $h^0(F, 2\sigma^*(K_{F_0})-2C_5)\geq 3$.  Clearly this implies $\beta\geq \frac{1}{3}$ by (\ref{ib}).  Since $\alpha(16)>2$, $\varphi_{16,X}$ is birational by Lemma \ref{s1}, Lemma \ref{s2} and Theorem \ref{key-birat}.
\smallskip

\noindent{\bf Subcase (2-ii)}.  If $K_{F_0}^2\leq 5$,  by Horikawa's theorem (see \cite{H1,H2,H3,H4}), the Albanese map of $F$ is a genus 2 fibration onto an elliptic curve $E$, say $\text{alb}: F\lrw E$.
On the other hand,  $K_{F_0}^2\geq 2p_g(F)$ implies $p_g(F)=2$. Modulo further birational modification, we may and do assume that $\Mov|K_F|$ is base point free. Pick a generic irreducible element $\hat{C}$ of $\Mov|K_F|$. If $\Mov|K_F|$ is composed of an irrational pencil, then $\hat{C}$ and $C_5$ are not in the same pencil, i.e. $(\hat{C}\cdot C_5)\geq 1$. Since $|G|$ is composed with a rational pencil, we have $h^0(\hat{C}, {C_5}|_{\hat{C}})\geq 2$ and then $(\hat{C}\cdot C_5)\ge 2$ by the Riemann-Roch theorem on $\hat{C}$.  
Moreover, numerically, one has $K_F\geq 2\hat{C}$. Then $2=(K_F\cdot C_5)\geq 4$, a contradiction. So $\Mov|K_F|$ must be a rational pencil. Write
$$\sigma^*(K_{F_0})\sim \hat{C}+H$$ where $H$ is an effective divisor. Pick a general fiber $C'$ of $alb$. Clearly we have $(\hat{C}\cdot C')=2$ as $|\hat{C}|$ is a rational pencil and $C'\not\in |\hat{C}|$. Also we have $2\geq (\sigma^*(K_{F_0})\cdot C')\geq 2$ by Lemma \ref{ll2}. Thus $(C'\cdot H)=0$ which means $H$ is vertical with respect to $alb$. So $H^2\leq 0$. Now one has $4\geq (\hat{C}+H)^2=\sigma^*(K_{F_0})^2\geq 4$ since
$(\sigma^*(K_{F_0})\cdot \hat{C})=2$ by Lemma \ref{ll2}. Thus $H^2=0$ and $H$ is equivalent to a multiple of $C'$. The only possibility is $H\equiv C'$.  Now we see that $C_5\sim \hat{C}$, otherwise, $(K_F\cdot C_5)>2$ gives a contradiction.
Hence we have
$$6\pi^*(K_X)|_F\geq \sigma^*(K_{F_0})\equiv C_5+C'$$ with $(C_5\cdot C')=2$.
Applying Kawamata-Viehweg vanishing theorem, one gets
\begin{eqnarray*}
|13K_{X'}||_F&\lsgeq&|K_{X'}+\roundup{7\pi^*(K_X)}+F||_F\\
&\lsgeq& |K_F+\roundup{Q_{-3}}+C_5|
\end{eqnarray*}
where  $Q_{-3}\equiv\pi^*(K_X)|_F+C'$ and $(Q_{-3}\cdot C_5)>2$.
Since $$|K_F+\roundup{Q_{-3}}+C_5||_{C_5}$$ gives a birational map,  we see that $\varphi_{13,X}$ is birational by Lemma \ref{s1} and Lemma \ref{s2},
\end{proof}

Proposition \ref{C5} and Proposition \ref{C52} directly imply the following:
\begin{thm}\label{t1} Let $X$ be a minimal projective 3-fold of general type with $p_g(X)=P_4(X)=1$. Then $\varphi_{16,X}$ is birational onto its image.
\end{thm}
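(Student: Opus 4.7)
The plan is to assemble Theorem \ref{t1} directly from the three propositions already established in this subsection, after verifying that they cover every possible case.

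First, I would recall that under the hypothesis $p_g(X)=P_4(X)=1$, the proof of \cite[Corollary 4.10]{EXP3} forces $P_5(X)\geq 3$, so we may adopt the setup of Subsection \ref{setup} with $m_0=5$. This naturally splits the argument into two branches according to whether $|5K_X|$ is composed of a pencil.

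In the first branch, where $|5K_X|$ is composed of a pencil, the first proposition of this section already gives that $\varphi_{15,X}$ is birational, and since $16K_{X'}\geq M_{15}$ the birationality of $\varphi_{16,X}$ follows immediately (alternatively, just quote that proposition). In the second branch we place ourselves in Setting ($\aleph$-1), so that $F$ is a general member of $|M_5|$, $|G|=|M_5|_F|$, and $C_5$ is a generic irreducible element of $|G|$. Here Propositions \ref{C5} and \ref{C52} handle respectively the subcases $g(C_5)\geq 3$ and $g(C_5)=2$.

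Thus the only thing left to verify is that the residual case $g(C_5)\leq 1$ cannot occur. Since $F$ has $p_g(F)\geq 2$ and is of general type, and $C_5$ is a general (hence smooth) member of the base point free linear system $|G|$ on $F$, adjunction gives $2g(C_5)-2 = C_5^2 + K_F\cdot C_5$. Both $C_5^2\geq 0$ (as $C_5$ moves) and $K_F\cdot C_5\geq \sigma^*(K_{F_0})\cdot C_5 = K_{F_0}\cdot\sigma_*(C_5)\geq 0$ hold; moreover, if the latter were zero then $\sigma_*(C_5)$ would be contracted by the ample (canonical) class on the canonical model, forcing $C_5$ to be exceptional, which contradicts the fact that $C_5$ moves in a positive-dimensional family. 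So $\sigma^*(K_{F_0})\cdot C_5\geq 1$, whence $g(C_5)\geq 2$. With this input the three propositions cover all possibilities, and the theorem follows.

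Since every case has already been worked out, there is no real technical obstacle left; the only point requiring a small sanity check is the genus inequality $g(C_5)\geq 2$ above, which is a one-line adjunction argument. The structural content of the theorem—that all numerical/birational invariants compatible with $p_g=P_4=1$ force $\varphi_{16,X}$ birational—is already entirely contained in Propositions \ref{C5} and \ref{C52} together with the pencil case, so the proof of Theorem \ref{t1} is essentially a bookkeeping consolidation.
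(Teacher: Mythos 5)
Your proposal is correct and is essentially the paper's own proof: Theorem \ref{t1} is obtained there precisely by combining the pencil-case proposition (which gives $\varphi_{15,X}$ birational) with Propositions \ref{C5} and \ref{C52} covering the non-pencil case. The two extra checks you supply --- that a moving curve $C_5$ on the surface of general type $F$ necessarily has $g(C_5)\geq 2$, and that birationality of $\varphi_{15,X}$ passes to $\varphi_{16,X}$ because $p_g(X)>0$ lets one add a fixed effective canonical divisor --- are standard facts the paper leaves implicit, so your write-up is the same argument with the bookkeeping made explicit.
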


\subsection{The case $P_4\geq 2$}\

\begin{prop}\label{421} Let $X$ be a minimal projective 3-fold of general type with $p_g(X)=1$ and $P_4\geq 2$. Then $\varphi_{16,X}$ is birational unless $P_4(X)=2$ and $|4K_X|$ is composed of a rational pencil of $(1,2)$-surfaces.
\end{prop}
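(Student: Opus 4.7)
The plan is to take $m_0 = 4$ and apply the setup of \ref{setup}, distinguishing cases by $d_4 := \dim \Gamma \in \{1,2,3\}$. Because $P_4 \geq 2$ we have $\mu \geq 1/4$, and Inequality \eqref{cri} gives $\pi^*(K_X)|_S \geq \frac{\zeta(4)}{4+\zeta(4)}\sigma^*(K_{S_0})$ on the generic irreducible element $S$ of $|M_4|$, which will be the main positivity input.

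When $d_4 \geq 2$, the generic irreducible element $S$ is a smooth surface of general type. Choosing an appropriate base-point-free linear system $|G|$ on $S$ (typically from a smoothed moving part of $|K_S|$ or an auxiliary pencil), one estimates $\beta$ and $\xi$ via \eqref{cri} and Lemmas \ref{ll2}, \ref{ll3}. A case analysis distinguishing whether $S_0$ is a $(1,2)$-surface or not, together with Lemmas \ref{s1} and \ref{s2} to verify the distinguishing hypotheses, leads in every instance to $\alpha(16) > 2$, and Theorem \ref{key-birat} yields birationality of $\varphi_{16,X}$. No exception appears here.

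When $d_4 = 1$, write $f : X' \to \Gamma$ for the induced fibration with general fiber $F$. A short Riemann--Roch/Clifford estimate on the line bundle $f_*\OO_{X'}(M_4)$ on $\Gamma$ shows that $\zeta = \deg f_*\OO_{X'}(M_4) \geq 2$ as soon as either $g(\Gamma)\geq 1$ or $P_4 \geq 3$; the remaining possibility $\zeta = 1$ forces $\Gamma \cong \bP^1$ and $P_4 = 2$. If $F$ is not a $(1,2)$-surface, the positivity $\pi^*(K_X)|_F \geq \tfrac{\zeta}{4+\zeta}\sigma^*(K_{F_0})$, combined with Kawamata--Viehweg vanishing and Lemmas \ref{ll2}, \ref{ll3}, \ref{ll1}, suffices to produce birationality of $\varphi_{16,X}$, mirroring the surface-type case analysis in the proof of Theorem \ref{t1}. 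If $F$ is a $(1,2)$-surface and $\zeta \geq 2$, take $|G| = \Mov|K_F|$, a base-point-free pencil of genus $2$ curves $C$; then $\pi^*(K_X)|_F \geq \tfrac{1}{3}\sigma^*(K_{F_0})$ yields $\beta \geq 1/3$ and $\xi \geq (\sigma^*(K_{F_0})\cdot C)/3 = 1/3$, whence $\alpha(16) \geq (15 - 4 - 3)\cdot \tfrac{1}{3} = \tfrac{8}{3} > 2$. The distinguishing hypotheses are checked via Lemmas \ref{s1}, \ref{s2}, and Theorem \ref{key-birat} concludes. The excluded case---$P_4 = 2$, $\Gamma \cong \bP^1$, $F$ a $(1,2)$-surface---is precisely the stated exception.

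The main obstacle will lie in the subcase $d_4 = 1$ with $F$ not a $(1,2)$-surface but of small invariants (e.g.\ a numerical Godeaux-type fiber with $K_{F_0}^2 = 1$), where the naive inequality $L^2 > 8$ needed for Lemma \ref{ll1} may fail for $L$ of the form $\alpha\,\sigma^*(K_{F_0})$ with $\alpha$ arising from the positivity bound. To handle such cases one must add a suitable moving curve to the pulled-back divisor, invoke Kawamata--Viehweg vanishing a second time, and conclude by a Clifford-type argument on the restricted curve, in the spirit of subcases (1-i)--(1-ii) of the proof of Proposition \ref{C52}. Analogous refinements are required in the $d_4 \geq 2$ cases when $S_0$ itself happens to be a $(1,2)$-surface.
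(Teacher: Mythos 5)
Your skeleton (take $m_0=4$, split according to $d_4$, use \eqref{cri} plus the $\alpha(16)>2$ criterion of Theorem \ref{key-birat}, with the exception arising exactly when $P_4=2$, $\Gamma\cong\bP^1$ and $F$ is a $(1,2)$-surface) agrees with the paper's, and your treatment of the $(1,2)$-fiber case with $\zeta\geq 2$ is exactly the paper's argument. However, there are two genuine gaps, precisely at the points where the paper uses a specific idea instead of the generic $\alpha$-estimate. The first is the case $d_4\geq 2$: you claim the case analysis ``leads in every instance to $\alpha(16)>2$'', yet you concede at the end that ``refinements are required'' when $S_0$ is a $(1,2)$-surface. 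This is not a refinement issue. If $S$ were a $(1,2)$-surface, Lemmas \ref{ll2} and \ref{ll3} are exactly unavailable, and the only bounds at hand are $\beta\geq\frac{1}{4}$ and $\xi\geq\frac{1}{5}$, giving at best $\alpha(16)\geq(16-1-4-4)\cdot\frac{1}{5}=\frac{7}{5}<2$; moreover, if this case could occur it would have to appear as a further exception in the statement, so the proposition itself would remain unproven. The paper's key step, which you are missing, is that this case cannot occur: since $K_S\sim(K_{X'}+S)|_S$ and $\pi^*(K_X)|_S+S|_S$ is nef, uniqueness of the Zariski decomposition forces $\sigma^*(K_{S_0})\geq\pi^*(K_X)|_S+S|_S\geq\frac{5}{4}S|_S$, hence $K_{S_0}^2\geq\frac{5}{4}>1$, so $S$ is never a $(1,2)$-surface when $|4K_X|$ is not composed of a pencil.

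The second gap is the case $d_4=1$ with $\zeta=1$ (i.e.\ $P_4=2$ and a rational pencil): the stated exception only concerns $(1,2)$-fibers, so you must still prove birationality when $F$ has $K_{F_0}^2=1$ but is not a $(1,2)$-surface; since $p_g(F)>0$, this means $F$ is a $(1,1)$-surface, with $h^0(K_F)=1$. You correctly flag that Lemma \ref{ll1} fails here (one only gets $L\geq\frac{11}{5}\sigma^*(K_{F_0})$ for $m=16$, so $L^2$ can be $<8$), but your proposed repair ``in the spirit of subcases (1-i)--(1-ii) of Proposition \ref{C52}'' cannot even start: those arguments restrict to a \emph{moving} canonical curve and use $p_g(F)\geq 4$, whereas for a $(1,1)$-surface $|K_F|$ has no moving part at all. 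The paper's mechanism is different and essential: Theorem \ref{KaE} gives $|15K_{X'}||_F\lsgeq|3(K_{X'}+F)||_F=|3K_F|$, and by Bombieri's theorem $|3K_{F_0}|$ is birational on every minimal surface of general type except $(1,2)$- and $(2,3)$-surfaces; combined with Lemma \ref{s1}, which separates the fibers, this makes $\varphi_{15,X}$ (hence $\varphi_{16,X}$) birational, and the remaining $(2,3)$ case is then settled by the numerical estimate you describe ($\xi\geq\frac{2}{5}$ by Lemma \ref{ll2}, so $\alpha(16)\geq\frac{12}{5}>2$). Without the extension-theorem-plus-Bombieri input, your argument does not close in either of these two cases.
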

\begin{proof}  Take $m_0=4$. Keep the same notation as in \ref{setup}.  By Theorem \ref{KaE}, we have
\begin{equation} |15K_{X'}||_S\lsgeq |3(K_{X'}+S)||_S=|3K_S|\label{15x}\end{equation}
for a generic irreducible element $S$ of $|M|$.  This also implies that
$\pi^*(K_X)|_S\geq \frac{1}{5}\sigma^*(K_{S_0})$.

When $d_1\geq 2$, we have $\zeta=1$ by definition.  Note that $K_S\sim (K_{X'}+S)|_S$. The uniqueness of Zariski decomposition implies that
$$\sigma^*(K_{S_0})\geq \pi^*(K_X)|_S+S|_S\geq \frac{5}{4}S|_S,$$
which means that $K_{S_0}^2\geq \frac{5}{4}>1$.
Thus $S_0$ is not a $(1,2)$-surface. Take $|G|=|S|_S|$. Then $\beta\geq \frac{1}{4}$.
By Lemma \ref{ll2}, we get $\xi\geq \frac{2}{5}$ and so
$$\alpha(15)\geq (15-1-4-4)\xi \geq \frac{12}{5}>2.$$
By Lemma \ref{s1}, Lemma \ref{s2} and Theorem \ref{key-birat}, $\varphi_{15,X}$ is birational.

When $d_1=1$, using Lemma \ref{s1} and (\ref{15x}), we may and do assume that
$F$ is either a $(2,3)$ or a $(1,2)$-surface.
For the case of $(2,3)$-surfaces,  we take $|G|=\text{Mov}|K_F|$. Then $\beta=\frac{1}{5}$ and we still have $\xi\geq \frac{2}{5}$.  Then, since $\alpha(16)>2$, $\varphi_{16,X}$ is birational by Theorem \ref{key-birat}.  For the case of $(1,2)$-surfaces,  we still take $|G|=\text{Mov}|K_F|$.  If $P_4>2$ or $P_4=2$ and $|M|$ is an irrational pencil, then we have $\zeta\geq 2$. This implies that $\pi^*(K_X)|_F\geq \frac{1}{3}\sigma^*(K_{F_0})$ and $\beta\geq \frac{1}{3}$. Then we have
$\xi\geq \frac{1}{3}$ and 
$\alpha(15)>2$. By Lemma \ref{s1}, Lemma \ref{s2} and Theorem \ref{key-birat}, $\varphi_{15, X}$ is birational. \end{proof}

\noindent{\bf ($\aleph$-2).} {\it Assume $p_g(X)=1$, $P_4(X)=2$ and $|M|$ is composed of a rational pencil of $(1,2)$-surfaces}.  One has $\chi(\OO_{X'})>0$ since $P_3=P_2$.  Furthermore, from the induced fibration $f:X'\rightarrow \bP^1$, one gets $q(X)=0$, $\chi(\OO_X)=h^2(\OO_{X'})=h^1(f_*\omega_{X'})\leq 1$ and, due to
 $n_{1,4}^0\geq 0$ (\cite[(3.6)]{EXP1}),
$3\geq \chi+2\geq P_5+\sigma_5. $

\begin{prop}\label{422} Let $X$ be a minimal projective 3-fold of general type with $p_g(X)=1$ and $P_4(X)=2$. Assume that $X$ has the property {\bf  ($\aleph$-2)}. Then $\varphi_{17,X}$ is birational.
\end{prop}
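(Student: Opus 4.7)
The plan is to work in the setup of Subsection \ref{3.2} with $m_0=4$, $\zeta=1$, $f:X'\to\mathbb{P}^1$ with general fiber $F$ a $(1,2)$-surface, and $|G|=\mathrm{Mov}|K_F|$ having generic irreducible element $C$ of genus $2$. First I record the baseline invariants: $\mu\geq \tfrac{1}{4}$ is immediate, and \eqref{cri} combined with Theorem \ref{KaE} yields $\pi^*(K_X)|_F\geq \tfrac{1}{5}\sigma^*(K_{F_0})$, so $\beta\geq \tfrac{1}{5}$ and $\xi\geq \tfrac{1}{5}$. These bounds give $\alpha(17)=7\xi<2$ in general, so a direct appeal to Theorem \ref{key-birat} with Lemmas \ref{s1} and \ref{s2} is insufficient; one has to sharpen $\beta$ or $\xi$ by exploiting higher pluricanonical systems.

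The second step is to classify the weighted basket $\mathbb{B}(X)$. Since $p_g=1$, $P_2=P_3$, $P_4=2$, and the remark before Proposition \ref{422} gives $\chi(\OO_X)\leq 1$ and $P_5+\sigma_5\leq 3$, the canonical sequences $B^{(n)}$ expressed via \cite[(3.3)--(3.14)]{EXP1} in terms of $\chi,P_2,\dots,P_{n+1}$, together with the effectiveness constraints on each $n^0_{i,j}(B)$, confine $\mathbb{B}(X)$ to a finite explicit list. For every surviving basket I can then read off sharp values (or effective lower bounds) for $P_m(X)$ up to $m\approx 12$, which in turn control $u_{m,-j}$ and $v_{m,-j}$ through Proposition \ref{-jl}.

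The third step is to boost $\pi^*(K_X)|_F$. Using Proposition \ref{-jl}(2) one obtains, for a suitable small $m_1$, a relation $m_1K_{X'}\geq j_1F+S_{m_1,-j_1}$ with $|S_{m_1,-j_1}|$ a base-point-free moving system. Feeding this into Proposition \ref{k1} gives the improved estimate $\pi^*(K_X)|_F\geq \tfrac{l_1+j_1}{m_1+j_1}\sigma^*(K_{F_0})$ (or the analogous bound against $C$), which enlarges both $\beta$ and $\xi$. In most baskets the new $\xi$ pushes $\alpha(17)$ above $2$ and Theorem \ref{key-birat} concludes. For baskets in which the restriction $|S_{m_1,-j_1}|_F|$ is composed of the same canonical pencil as $|G|$ one invokes Proposition \ref{X1}(2) with the appropriate $m_1$, and when a secondary moving curve $C_1\not\equiv C$ on $F$ arises one applies Proposition \ref{X2} with intersection datum $\delta_1$; in every variant the output is birationality of $\varphi_{n,X}$ for some $n\leq 17$.

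The main obstacle is the basket enumeration together with the subsequent case-by-case bookkeeping: extremal baskets whose $P_m$ grow slowly through $m=5,6$ leave very little slack in the inequalities, and force a finer split between Propositions \ref{X1}(2) and \ref{X2}. The delicate point will be to verify that in each surviving basket the improved $\xi$ satisfies $n\xi>2$ for some $n+m_1+1\leq 17$, and that Lemma \ref{s2} applies at that same level so that both distinguishing properties in Theorem \ref{key-birat} hold; ruling out pathological baskets where these numerical thresholds fail is the combinatorial heart of the proof.
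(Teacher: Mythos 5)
Your toolbox overlaps heavily with the paper's (Lemmas \ref{s1}, \ref{s2}, Theorem \ref{key-birat}, Propositions \ref{-jl}, \ref{k1}, \ref{X1}, \ref{X2}), but your organizing principle is inverted, and the inversion leaves a genuine gap. The paper's proof is geometry-first: it splits on $P_5\in\{2,3\}$ (forced by ($\aleph$-2)), then on how $|M_5|$ (resp.\ $|M_6|$) sits relative to the fibration, and uses basket arithmetic only once, in the terminal subcase $P_5=2$, $P_6=3$, where $B_X$ is pinned down, $r_X=60$, and integrality of $r_X\xi$ yields $\xi\geq\frac{3}{10}$. Your plan is basket-first: enumerate $\mathbb{B}(X)$, then boost $\beta$ and $\xi$ on the fiber $F$ within each basket. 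The problem is that the basket only determines the plurigenera $P_m$, not the geometric configuration (whether $|M_5|_F|$ and $|G|$ are composed of the same pencil, whether $M_5|_F$ is irreducible, whether certain intersection numbers vanish), so inside a fixed basket you still face the paper's full case division --- and in the hardest configuration your tools stall. Concretely: when $P_5=3$, $|M_5|$ is not composed of a pencil, $M_5|_F=C_5\equiv C$ is irreducible, and $(C_5\cdot\hat{C})=0$ for the generic irreducible element $\hat{C}$ of $|M_5|_{M_5}|$, the best bounds available on $F$ are $\mu\geq\frac14$, $\beta\geq\frac15$ and $\xi\geq\frac27$ (the last from \cite[Table A3]{EXP3}, already stronger than your baseline $\xi\geq\frac15$), and these give
$$\alpha(17)=\Bigl(16-\frac{1}{\mu}-\frac{1}{\beta}\Bigr)\xi=7\cdot\frac{2}{7}=2,$$
which is \emph{not} $>2$, so Theorem \ref{key-birat} does not apply. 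Nor can arithmetic rescue you: for the relevant baskets (items (i),(ii) of Proposition \ref{423}) one has $r_X=210$ or $60$ with $K^3=\frac{2}{105}$ or $\frac{1}{60}$, integrality of $r_X\xi$ gives nothing beyond $\frac27$, Inequality \eqref{kieq1} only reproduces $\xi\geq\frac27$, and \eqref{kcube} is consistent with all these values.

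The missing idea is the paper's change of base surface. In that subcase one shows $C_5\equiv M_5|_{S_5}$ on a general member $S_5\in|M_5|$, observes that $S_5$ is \emph{not} a $(1,2)$-surface (it has $p_g(S_5)\geq 2$), and applies Lemma \ref{ll2} on $S_5$ to obtain
$$\xi\geq \frac{1}{6}\bigl(\sigma_5^*(K_{S_{5,0}})\cdot \hat{C}\bigr)\geq \frac13,$$
whence $\alpha(17)\geq\frac73>2$ and $\varphi_{17,X}$ is birational. All of your boosting mechanisms are anchored on the fiber $F$, where Lemma \ref{ll2} is unavailable precisely because $F$ \emph{is} a $(1,2)$-surface; so this case --- which is exactly why the statement reads $\varphi_{17}$ rather than $\varphi_{16}$ --- cannot be closed by your plan as written. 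The rest of your outline (basket enumeration to bound $P_6$, Propositions \ref{X1}/\ref{X2} for the split configurations, integrality of $r_X\xi$ in the extremal basket) does track the paper's Case 2 and the spirit of Section 6, but without the $S_5$ re-basing trick the proof does not go through.
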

\begin{proof}  Note that $2\leq P_5\leq 3$. By \cite[Table A3]{EXP3}, we know $\xi\geq \frac{2}{7}$ and $K_X^3\geq \frac{1}{70}$.  \medskip

{\bf Case 1}. $P_5=3$.  If $|5K_{X'}|$ is composed of a pencil, then we have
$5\pi^*(K_X)\geq 2F$,  which means $\mu\geq \frac{2}{5}$. This also implies that $\pi^*(K_X)|_F\geq \frac{2}{7}\sigma^*(K_{F_0})$ whence $\beta\geq \frac{2}{7}$. %and that $\xi\geq \frac{2}{7}$.
Since $\alpha(15)\geq \frac{16}{7}>2$, $\varphi_{15,X}$ is birational by the similar reason.

Now assume that $|5K_{X'}|$ is not composed of a pencil. Clearly we have $h^0({M_5}|_F)\geq 2$. Set $|G_5|=|{M_5}|_F|$.
\medskip

\noindent{\bf Subcase 1.1}.  When $|G|$ and $|G_5|$ are not composed of the same pencil, one has $\xi\geq \frac{1}{5}(M_5|_F\cdot C)\geq \frac{2}{5}$.  Recall that we have $m_0=4$, $\zeta=1$ and $\beta=\frac{1}{5}$.  So $\alpha(16)>2$ and $\varphi_{16, X}$ is birational.
\medskip

\noindent{\bf Subcase 1.2}.  When $|G|$ and $|M_5|_F|$ are composed of the same pencil, we must have $(C\cdot G_5)=0$.  %We note that, under the situation: $m_0=4$, $\zeta=1$ and $\beta\geq \frac{1}{5}$, one has $\xi\geq \frac{1}{5}$.
%Then $\alpha(16)>1$ implies $\xi\geq \frac{1}{4}$; $\alpha(15)>1$ implies $\xi\geq \frac{4}{15}$; finally $\alpha(14)>1$ implies
Recall that we have $\xi\geq \frac{2}{7}$.

If $M_5|_F$ is not irreducible for a general $M_5$, we have $\beta\geq \frac{2}{5}$. Since $\alpha(15)>2$,  $\varphi_{15,X}$ is birational by Lemma \ref{s1}, Lemma \ref{s2} and Theorem \ref{key-birat}.

If $M_5|_F$ is irreducible for the general $M_5$, we denote this curve by $C_5=M_5\cap F$. On $F$, we have $C\equiv C_5$. Take a generic irreducible element $\hat{C}$ of $|M_5|_{M_5}|$. Suppose $(C_5\cdot \hat{C})>0$. We must have $(C_5\cdot \hat{C})\geq 2$ since $|C_5|$ is a rational pencil. So we get
$$\xi=(\pi^*(K_X)\cdot C_5)=(\pi^*(K_X)|_{M_5}\cdot C_5)\geq \frac{1}{5}(\hat{C}\cdot C_5)\geq \frac{2}{5}.$$
We again have $\alpha(16)>2$, which implies the birationality of $\varphi_{16,X}$.
Suppose $(C_5\cdot \hat{C})=0$ and $M_5|_{M_5}\geq 2\hat{C}$.  We set $\tilde{m}_0=5$. Then $\zeta(\tilde{m}_0)=1$. Set $|\tilde{G}|=|M_5|_{M_5}|$. Clearly we have $\beta(\tilde{m}_0, |\tilde{G}|)=\frac{2}{5}$ and
$$\xi(\tilde{m}_0, |\tilde{G}|)=\xi(m_0, |G|)\geq \frac{2}{7}.$$ Since
$\alpha_{(\tilde{m}_0, |\tilde{G}|)}(16)\geq \frac{15}{7}>2$, $\varphi_{16,X}$ is birational by Lemma \ref{s1}, Lemma \ref{s2} and Theorem \ref{key-birat}.  As the last step, suppose  $(C_5\cdot \hat{C})=0$ and $M_5|_{M_5}$ is irreducible.  Then we have $C_5\equiv M_5|_{S_5}$ on a general member $S_5\in |M_5|$. We know that $S_5$ is not a $(1,2)$-surface and $p_g(S_5)\geq 2$. So $$\xi=\xi(4, |G|)=(\pi^*(K_X)|_{S_5}\cdot M_5|_{S_5})
\geq\frac{1}{6}(\sigma_5^*(K_{S_{5,0}})\cdot \hat{C})\geq \frac{1}{3} $$
where $\sigma_5:S_5\lrw S_{5,0}$ is the contraction onto the minimal model. Thus $\alpha(17)\geq \frac{7}{3}>2$ and $\varphi_{17,X}$ is birational for the similar reason.
\medskip

{\bf Case 2}. $P_5=2$. Since we have $\chi(\OO_X)=1$, $\epsilon_5\geq 0$ implies $P_6+\sigma_5\leq 5$. By $\epsilon_6=0$, we get $P_6=P_7$ and $\epsilon=0$. Hence $\sigma_5=0$.  By \cite[Lemma 3.2]{EXP2}, we have $P_6\geq P_4+P_2=3$.
We set $m_1=6$ and shall use Proposition \ref{X1} to consider in details the property of the maps $\theta_{6,-j}$ and $\psi_{6,-j}$ for $j\geq 0$ (see Definition \ref{twomaps}). Recall that $m_0=4$.

%Modulo further birational modifications, we may and do assume that $|M_6|$ is base point free. For any integers $j, l\geq 0$, we consider the natural maps:
%$$H^0(X', M_6-jF)\overset{\psi_{6,-j}}\lrw H^0(F, M_6|_F), $$
%$$H^0(F, {M_6}|_F-lC)\overset{\theta_{6,-l}}\lrw H^0(C.M_6|_C)$$
%where $F$ is a general fiber of $f:X'\lrw \bP^1$ and $C$ is a generic irreducible element of $\text{Mov}|K_F|$.
If $v_{6,0}\geq 3$, one has $\xi\geq\frac{1}{6}\deg(M_6|_C)\geq \frac{2}{3}$ by Riemann-Roch and Clifford's theorem on $C$. Since $\alpha(14)\geq \frac{8}{3}>2$, $\varphi_{14,X}$ is birational by Lemma \ref{s1}, Lemma \ref{s2} and Theorem \ref{key-birat}.
If $v_{6,-1}\geq 2$,  then we have
$$M_6|_F\geq C+C_{-1}$$
where $C_{-1}$ is a moving curve with $h^0(C,C_{-1}|_C)\geq 2$ (hence $(C\cdot C_{-1})\geq 2$).
Since Kawamata-Viehweg vanishing theorem gives
$$|12K_{X'}||_F\lsgeq |K_F+\roundup{\pi^*(K_X)|_F}+C_1+C|$$
and $|K_F+\roundup{\pi^*(K_X)|_F}+C_1+C||_C=|K_C+D|$ with $\deg(D)>2$. Thus
$\varphi_{12,X}$ is birational.

We assume, from now on, that $v_{6,0}\leq 2$ and $v_{6,-1}\leq 1$.
\medskip

\noindent{\bf Subcase 2.1}. Either  $h^0(F, M_6|_F)\geq 4$ or $h^0(F, M_6|_F)=3$ and $v_{6,0}=1$.

Clearly,  one has $M_6|_F\geq 2C$, which means $\beta\geq \frac{1}{3}$.  Since $\alpha(16)\geq \frac{16}{7}>2$, $\varphi_{16,X}$ is birational by the similar reason.
\medskip

\noindent{\bf Subcase 2.2}.  Either $h^0(M_6|_F)=3$ and $v_{6,0}\geq 2$ or $h^0(F, M_6|_F)\leq 2$.
%Furthermore, when $h^0(M_6|_F)=3$ and $\dim\im(\theta_{6,0})=2$, we clearly have $\xi\geq \frac{1}{3}$, which implies that $\varphi_{17,X}$ is birational.

%We may assume that $h^0(F, M_6|_F)\leq 2$ or $h^0(M_6|_F)=3$ and $\dim\im(\theta_{6,0})=2$.  We consider the following maps:
%$$H^0(X', M_6)\overset{\psi_{6,-j}}\lrw H^0(F, M_6|_F)$$where $j\geq 0$.
In particular, we have $u_{6,0}\leq 3$.
When $P_6\geq 5$ or  $P_6=4$ and $u_{6,0}\leq 2$, one naturally  has
$$M_6\geq F+F_1$$
for a moving divisor $F_1$ on $X'$.  Suppose that $F$ and $F_1$ are in the same algebraic family. Then $\mu\geq \frac{1}{3}$ and hence $\beta\geq \frac{1}{4}$.
 As $\alpha_{(m_0, |G|)}(16)\geq \frac{16}{7}>2$, $\varphi_{16,X}$ is birational by the similar reason. Suppose that $F$ and $F_1$ are in different algebraic families.
By Proposition \ref{X1} ($m_0=4$, $m_1=6$), we see that either $\varphi_{13,X}$ is birational or we can get a better estimate for $\xi$. In fact, since  $\xi\geq \frac{2}{7}$ and if we take $n=4$, Proposition \ref{X1}(2.1) gives $\xi\geq \frac{4}{11}$; similar trick implies $\xi\geq \frac{2}{5}$. Now since $6\xi>2$, we see that $\varphi_{13,X}$ is birational by Proposition \ref{X1}(2.2).

When $P_6=4$ and $u_{6,0}=3$,  then we must have $h^0(M_6|_F)=3$. By assumption,  one gets $v_{6,0}=2$, which implies $\xi\geq \frac{1}{3}$. Since $\alpha(17)>2$,  $\varphi_{17,X}$ is birational.

When $P_6=3$,  we have
$$B^{(5)}=\{7\times (1,2), 2\times (2,5), 2\times (1,3), (1,4)\},\ K^3=\frac{1}{60}.$$
By \cite[Lemma 3.2]{EXP2}, we have $P_8\geq P_6+P_2=4$ and $\epsilon_7=5-P_8\geq 0$.  Since any one-step packing of $B^{(5)}$ has the volume $<\frac{1}{70}$, we see $B_X=B^{(5)}$ and $K_X^3=\frac{1}{60}$. Note that we have $\xi\ge\frac{2}{7}$. Since $r_X=60$ and $r_X\xi$ is an integer, we see $\xi\geq \frac{3}{10}$.  Thus $\alpha(17)>2$ and $\varphi_{17,X}$ is birational.
\end{proof}

\begin{prop}\label{423} Let $X$ be a minimal projective 3-fold of general type with $p_g(X)=1$ and $P_4(X)=2$. Assume that $X$ has the property {\bf  ($\aleph$-2)}.  Then $\varphi_{16,X}$ is birational unless $X$ belongs to one of the following types:
\begin{itemize}
\item[(i)] $B_X=\{4\times (1,2), (3,7), 3\times (2,5), (1,3)\}$,
 $K^3=\frac{2}{105}$;
 \item[(ii)] $B_X=\{4\times (1,2), (5,12), 2\times (2,5), (1,3)\}$,
$K^3=\frac{1}{60}$;

\item[(iii)] $B_X=\{7\times (1,2), (3,7), 2\times (1,3), (2,7)\}$,
$K^3=\frac{1}{42}$;

\item[(iv)] $B_X=\{7\times (1,2), (3,7), (1,3), (3,10)\}$,
$K^3=\frac{2}{105}$;

\item[(v)]  $B_X=\{7\times (1,2), 2\times (2,5), 2\times (1,3), (1,4)\}$, $K^3=\frac{1}{60}$.
\end{itemize}
\end{prop}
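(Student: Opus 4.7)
The plan is to refine the case analysis of Proposition~\ref{422}, which already established the birationality of $\varphi_{17,X}$, by isolating the sub-scenarios where that proof produced only $\alpha(17)>2$ rather than $\alpha(16)>2$ and, in each such residual case, either sharpen the bound on~$\xi$ to force $\alpha(16)>2$ or else pin down the weighted basket $\mathbb{B}(X)$ via Reid's orbifold Riemann-Roch and the Chen--Chen inversion formulae. Inspecting the proof of Proposition~\ref{422}, the three residues left open are: (a) Case~1, Subcase~1.2, terminal scenario, where $P_5=3$, $|M_5|$ is not composed of a pencil, $|M_5|_F|$ and $|G|$ are composed of the same pencil, and both $M_5|_F$ and $M_5|_{M_5}$ are irreducible with $(C_5\cdot\hat{C})=0$, giving only $\xi\geq\frac{1}{3}$; (b) Case~2 ($P_5=2$), Subcase~2.2 with $P_6=4$ and $u_{6,0}=3$, again giving $\xi\geq\frac{1}{3}$; and (c) Case~2, Subcase~2.2 with $P_6=3$, which was already forced to be basket~(v) of the list.

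For residues (a) and (b) I would apply Proposition~\ref{X2} to the decomposition $M_{m_1}|_F\geq jC+C_1$ (with $m_1=5$ in case (a), $m_1=6$ in case (b)) extracted from the restriction-map data $\psi_{m_1,-j}$ of Subsection~\ref{3.2}, together with Proposition~\ref{X1} whenever $|M_{m_1}|\geq F+S_1$ can be arranged with $|S_1|_F|\not\equiv|G|$; a careful selection of $j$ and $\delta_1=(C_1\cdot C)$ aims to boost~$\xi$ strictly above~$\frac{1}{3}$ so that $\alpha(16)>2$ and Theorem~\ref{key-birat} delivers the birationality of $\varphi_{16,X}$. When every such sharpening stalls, the extremal values of $u_{m_1,0}$, $v_{m_1,0}$ and of the pluricanonical dimensions compatible with $p_g=1$, $P_4=2$ and $\chi(\OO_X)\in\{0,1\}$ are plugged into \cite[(3.3)--(3.14)]{EXP1} to read off the canonical sequence $B^{(n)}$; the constraints $n_{1,r}^0\geq 0$, $\sigma\geq 0$, $\epsilon_n\geq 0$, together with the lower bound $K_X^3\geq\frac{1}{70}$ from \cite[Table~A3]{EXP3} and the packing stability test $K^3(B')\geq\frac{1}{70}$ for every one-step packing~$B'$, prune the admissible list to the five baskets~(i)--(v).

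The main obstacle is a complete and transparent enumeration of baskets in residue~(a): the parameters $(P_5,P_6,P_7,\chi)$ admit several tuples, each producing a family of canonical sequences that must be run through the packing analysis of \cite[Section~3]{EXP1}, and in every surviving case one must verify that the extremal numerical profile genuinely saturates both the sharpening attempts of Propositions~\ref{X1} and~\ref{X2} and the distinguishing criteria of Lemmas~\ref{s1} and~\ref{s2}, so that Theorem~\ref{key-birat} does not secretly succeed for $m=16$. Once the enumeration is stable and the saturation is verified, the proposition follows, and the five surviving baskets are precisely those assembled into the set~$\mathbb{S}_1$ of Theorem~\ref{main1}.
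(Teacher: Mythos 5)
Your identification of the three residual cases left open by Proposition \ref{422} is correct, but the two tools you propose for finishing are precisely the ones that cannot work there, and this creates a genuine gap. In residue (a) the general $M_5|_F$ is an irreducible curve numerically equivalent to $C$, and in residue (b) the data $P_6=4$, $u_{6,0}=3$, $v_{6,0}=2$ only yield $M_6|_F\geq 2C$; in neither case does a decomposition $M_{m_1}|_F\geq jC+C_1$ with $C_1\not\equiv C$ exist, so Propositions \ref{X1} and \ref{X2} with $m_1=5,6$ have nothing to bite on --- that is exactly why these cases survived Proposition \ref{422}, and no further boost of $\xi$ beyond $\frac13$ is available from them. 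The paper's actual new ingredient (its Step 1) is to pass to $m_1=7$ and $m_1=8$: it shows that $P_7\geq 5$ or $P_8\geq 6$ forces $\varphi_{16,X}$ (or an earlier map) to be birational, so in the residual cases one may assume $P_7\leq 4$ and $P_8\leq 5$. These upper bounds are indispensable: combined with $\sigma_5=0$, $\varepsilon_6=0$ and $\varepsilon_7\geq 0$ they pin down $P_6,P_7,P_8$ and hence $B^{(7)}$ in cases (a) and (b). Your proposal never bounds $P_7$ and $P_8$; the non-negativity constraints alone leave $P_6$ undetermined (in case (a) they only give $3\leq P_6\leq 7$), so you cannot even write down $B^{(7)}$, and the packing tree you must search is not the one that terminates in the five listed baskets.

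The second gap is the volume threshold. The paper uses $\xi\geq\frac13$ (already proved in the residual cases) to get $K_X^3\geq\frac{1}{4\cdot 5}\,\xi\geq\frac1{60}$, and it is this bound, not $K_X^3\geq\frac1{70}$ from \cite[Table A3]{EXP3}, that cuts the packing analysis at exactly the five baskets. Concretely, basket (ii), $B_{60}=\{4\times(1,2),(5,12),2\times(2,5),(1,3)\}$, admits the prime packing $(5,12)+(2,5)\rightarrow(7,17)$ (note $5\cdot 5-2\cdot 12=1$), producing $\{4\times(1,2),(7,17),(2,5),(1,3)\}$ with $K^3=\frac1{60}-\frac{1}{12\cdot 5\cdot 17}=\frac{4}{255}>\frac1{70}$. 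This basket is compatible with $p_g=1$, $P_4=2$, $\chi(\OO_X)=1$ (packings of $B^{(5)}$, indeed of $B^{(7)}$, preserve the low plurigenera), so it passes every test in your scheme, yet it is not among (i)--(v); with the bound $\frac1{60}$ it is excluded, since $\frac4{255}<\frac1{60}$. Hence your enumeration would terminate at a strictly larger list and would not prove the proposition as stated. Two minor points: under ($\aleph$-2) one has $\chi(\OO_X)=1$ exactly, not $\chi(\OO_X)\in\{0,1\}$; and the ``saturation'' verification you propose (that $\varphi_{16,X}$ does not secretly become birational for the listed baskets) is not needed, since the proposition asserts only the one-directional implication that non-birationality forces $B_X$ into the list.
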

\begin{proof} From the proof of the previous proposition, we only need to consider the following three situations:
\begin{itemize}
\item[{\bf (\ref{423}.1)}] $P_5=3$ (the last situation of Subcase 1.2 of Proposition \ref{422});

\item[{\bf (\ref{423}.2)}] $P_5=2$ and $P_6=P_7=4$ (the second situation of Subcase 2.2 of Proposition \ref{422});

\item[{\bf (\ref{423}.3)}] $P_5=2$ and $P_6=P_7=3$ (the last situation of Subcase 2.2 of Proposition \ref{422}).
\end{itemize}

{\bf Step 1}.  Either $P_7\geq 5$ or $P_8\geq 6$.

We keep the same notation as in Proposition \ref{-jl}. Take $m_1=7$. If $v_{7,0}\geq 3$, then $\xi\geq \frac{1}{7}\deg(M_7|_C) \geq \frac{4}{7}$. Since $\alpha(14)\geq \frac{16}{7}>2$, $\varphi_{14,X}$ is birational. If $v_{7,0}\leq 2$ and $u_{7,0}\geq 4$, then $M_7|_F\geq C+C_1$ for certain moving curve $C_1$ (i.e. $h^0(F, C_1)\geq 2$).  For the case $C\equiv C_1$, we have $\beta\geq \frac{2}{7}$ and $\alpha(16)\geq \frac{15}{7}>2$. Hence $\varphi_{16,X}$ is birational. For the case $C\not\equiv C_1$, $ \varphi_{13,X}$ is birational by Proposition \ref{X2}.
If  $v_{7,0}\leq 2$ and $u_{7,0}\leq 3$, since $P_7\geq 5$, $M_7\geq F+F_1$ for some moving divisor $F_1$ on $X'$. In the case $F\equiv F_1$, we have $\mu\geq \frac{2}{7}$ and then $\beta\geq \frac{2}{9}$. Since $\alpha(13)\geq \frac{8}{7}>1$, we see $\xi\geq \frac{4}{13}$ Since $\alpha(16)\geq \frac{28}{13}>2$, $\varphi_{16,X}$ is birational. Finally, for the case $F\not\equiv F_1$,  by Proposition \ref{X1}, either $\varphi_{14,X}$ is birational or we have  that $|S_1|_F|$ and $|G|$ are not composed of the same pencil. Take $n=4$ and run Proposition \ref{X1}(2.1), we get $\xi\geq \frac{1}{3}$. Similarly, take $n=7$, since $7\xi>2$, $\varphi_{15,X}$ is birational again due to Proposition \ref{X1}(2.2).

Similarly, take $m_1=8$.  If $v_{8,0}\geq 3$, then $\xi\geq \frac{1}{8}\deg(M_8|_C) \geq \frac{1}{2}$. Since $\alpha(15)\geq \frac{5}{2}>2$, $\varphi_{15,X}$ is birational. If $v_{8,0}\leq 2$ and $u_{8,0}\geq 4$, then $M_8|_F\geq C+C_1$ for certain moving curve $C_1$ on $F$.  For the case $C\equiv C_1$, we have $\beta\geq \frac{1}{4}$ and the optimization by Inequality (\ref{kieq1}) gives $\xi\geq \frac{4}{13}$.  Hence $\alpha(16)>2$ and $\varphi_{16,X}$ is birational. For the case $C\not\equiv C_1$, $ \varphi_{14,X}$ is birational by Proposition \ref{X2}.
If  $v_{8,0}\leq 2$ and $u_{8,0}\leq 3$, since $P_8\geq 6$,  $h^0(M_8-F)\geq 3$.  In the case that $|M_8-F|$ is composed of the same pencil as $|F|$, we have $M_8\geq 3F$.
Then $\mu\geq \frac{3}{8}$ and $\beta\geq \frac{3}{11}$. As $\xi\geq \frac{2}{7}$ and
 $\alpha(15)\geq \frac{46}{21}>2$, $\varphi_{15,X}$ is birational. Finally, for the case  $|M_8-F|$ is not composed of the same pencil as $|F|$, we may write $M_8\geq F+F_1$ for some moving divisor $F_1$ with $F\not\equiv F_1$.  By Proposition \ref{X1}, either $\varphi_{15,X}$ is birational or we have  that $|F_1|_F|$ and $|G|$ are not composed of the same pencil. Take $n=4$ and run Proposition \ref{X1}(2.1), we get $\xi\geq \frac{4}{13}$. Similarly, take $n=7$, since $7\xi>2$, $\varphi_{16,X}$ is birational due to Proposition \ref{X1}(2.2).

Therefore we may assume $P_7\leq  4$ and $P_8\leq 5$ in next steps.
\medskip

{\bf Step 2}. Case (\ref{423}.1).

In Property {\bf  ($\aleph$-2)}, $P_5=3$ implies $\sigma_5=0$. From $\varepsilon_6=0$, we get  $4\geq P_7=P_6+1$. Thus $P_6=P_5=3$ and $P_8=4,5$.
Referring to the corresponding situation in the previous proposition, we have proved $\xi\geq \frac{1}{3}$.  Thus $K_X^3\geq \frac{1}{4\cdot 5}\xi\geq \frac{1}{60}$.
%By Proposition \ref{X1}, when $P_8\geq 6$, $\varphi_{16, X}$ s birational.
%We can study the map $H^0(M_8|_F)\lrw H^0(C,M_8|_C)$ and
%$H^0(M_8)\lrw H^0(F, M_8|_F)$ in the similar way. The conclusion is that: plugging into $\xi\geq \frac{1}{3}$, when $h^0(M_8|_F)\geq 4$, $\varphi_{16,X}$ is birational. We may assume that $h^0(M_8|_F)\leq 3$. Further study shows that, when $P_8\geq 6$, $\varphi_{16,X}$ is birational.
%We may assume $P_8=4$,  $5$ and $P_7=4$, which implies $P_6=P_5=3$.
Since $\varepsilon_7=1,2$, we have either
$$B^{(7)}=\{3\times (1,2), 2\times (3,7), 2\times (2,5), (1,3)\}$$
with $K^3=\frac{1}{210}$ (contradicting to  $K_X^3\geq \frac{1}{60}$);  or
$$B^{(7)}=\{4\times (1,2), (3,7), 3\times (2,5), (1,3)\}$$
with $K^3=\frac{2}{105}$, the only possible packing is
$$B_{60}=\{4\times (1,2), (5,12), 2\times (2,5), (1,3)\}$$
with $K^3=\frac{1}{60}$ which is minimal. This asserts (i) and (ii). %By calculation, we see $P_9=6$ and $P_{10}=8$.
\medskip

{\bf Step 3}. Case (\ref{423}.2).

We still have $\xi\geq\frac{1}{3}$ and so $K_X^3\geq \frac{1}{60}$. Similarly, since
$$5\geq P_8\geq P_6+P_2.$$
So we have $P_8=5$ and
$$B^{(7)}=\{7\times (1,2), (3,7), 2\times (1,3), (2,7)\}$$
with $K^3=\frac{1}{42}$. This has only one possible packing
$$B_{105}=\{7\times (1,2), (3,7), (1,3), (3,10)\}$$
with $K^3=\frac{2}{105}$. This asserts (iii) and (iv).
\medskip

{\bf Step 4}. Case (\ref{423}.3).

We have proved that
$$B_X=B^{(5)}=\{7\times (1,2), 2\times (2,5), 2\times (1,3), (1,4)\}$$ and $K_X^3=\frac{1}{60},$
which asserts (v).
\end{proof}

\begin{thm}\label{t2} Let $X$ be a minimal projective 3-fold of general type with $p_g(X)=1$ and $P_4(X)\geq 2$. Then
\begin{itemize}
\item[(1)] $\varphi_{17,X}$ is birational;
\item[(2)] $\varphi_{16,X}$ is birational unless $X$ belongs to one of the following types:
\begin{itemize}
\item[(i)] $B_X=\{4\times (1,2), (3,7), 3\times (2,5), (1,3)\}$,
 $K^3=\frac{2}{105}$, $P_2(X)=1$ and $\chi(\OO_X)=1$;
 \item[(ii)] $B_X=\{4\times (1,2), (5,12), 2\times (2,5), (1,3)\}$,
$K^3=\frac{1}{60}$, $P_2(X)=1$ and $\chi(\OO_X)=1$;

\item[(iii)] $B_X=\{7\times (1,2), (3,7), 2\times (1,3), (2,7)\}$,
$K^3=\frac{1}{42}$, $P_2(X)=1$ and $\chi(\OO_X)=1$;

\item[(iv)] $B_X=\{7\times (1,2), (3,7), (1,3), (3,10)\}$,
$K^3=\frac{2}{105}$, $P_2(X)=1$ and $\chi(\OO_X)=1$;

\item[(v)]  $B_X=\{7\times (1,2), 2\times (2,5), 2\times (1,3), (1,4)\}$, $K^3=\frac{1}{60}$,
 $P_2(X)=1$ and $\chi(\OO_X)=1$.
\end{itemize}
\end{itemize}
\end{thm}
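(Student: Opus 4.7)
The plan is simply to patch together Propositions \ref{421}, \ref{422} and \ref{423} already established in this subsection, and then to pin down the two extra invariants $P_2(X)=1$ and $\chi(\OO_X)=1$ appearing in the exceptional list. First I would invoke Proposition \ref{421}: this already gives the birationality of $\varphi_{16,X}$ (hence of $\varphi_{17,X}$) outside of Setting ($\aleph$-2), i.e.\ whenever we do not have $P_4(X)=2$ together with $|4K_X|$ composed of a rational pencil of $(1,2)$-surfaces. So both parts of the theorem reduce to the analysis under Setting ($\aleph$-2).

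Under ($\aleph$-2), part (1) is immediate from Proposition \ref{422}, which asserts the birationality of $\varphi_{17,X}$. For part (2), Proposition \ref{423} restricts $B_X$ and $K_X^3$ to precisely the five types (i)--(v) listed in the theorem statement. It remains only to verify $P_2(X)=\chi(\OO_X)=1$ in each of these cases. The discussion immediately after Setting ($\aleph$-2) already shows $0<\chi(\OO_X)\le 1$: positivity uses $P_3=P_2$ combined with $p_g(X)=1$ and $q(X)=0$, while the upper bound comes from $\chi(\OO_X)=h^1(f_*\omega_{X'})\le 1$. Hence $\chi(\OO_X)=1$.

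For $P_2(X)$, I would apply the subadditivity of plurigenera in the form $P_4(X)\ge 2P_2(X)$ (the same form of \cite[Lemma 3.2]{EXP2} that was used to derive $P_6\ge P_4+P_2$ in the proof of Proposition \ref{422}). If $P_2(X)\ge 2$ then $P_4(X)\ge 4$, contradicting $P_4(X)=2$; since $p_g(X)=1$ forces $P_2(X)\ge 1$, this pins down $P_2(X)=1$. The main obstacle is essentially none — the theorem is a bookkeeping consolidation of the three propositions above, and the only delicate point is ensuring that the strict form $P_{m+n}\ge P_m+P_n$ of subadditivity (rather than the weaker $P_m+P_n-1$) applies, which is exactly the form already in use earlier in this section.
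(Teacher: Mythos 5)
Your proposal is correct and is essentially the paper's own argument: the paper proves Theorem \ref{t2} in one line, precisely by assembling Proposition \ref{421} (reduction to Setting ($\aleph$-2)), Proposition \ref{422} (part (1)) and Proposition \ref{423} (the list in part (2)), exactly as you do. Your explicit verification of the extra invariants $P_2(X)=1$ and $\chi(\OO_X)=1$ spells out what the paper leaves implicit in Setting ($\aleph$-2) and inside the proofs of Propositions \ref{422}--\ref{423}; note moreover that the delicate point you flag is harmless, since even the weak subadditivity $P_4\geq 2P_2-1$ already gives $P_4\geq 3>2$ once $P_2\geq 2$, so your contradiction does not require the strict form $P_{m+n}\geq P_m+P_n$.
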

\begin{proof} Theorem \ref{t2} follows directly from Proposition \ref{421}, Proposition \ref{422} and Proposition \ref{422}.
\end{proof}

Theorem \ref{t1} and Theorem \ref{t2} imply Theorem \ref{main1}.

%%%%%
%%%%%%%%% PG=3 %%%%%%%%%%%%%%%%
\section{\bf Threefolds of general type with $p_g=3$ (Part I)}\label{pg3d=2}
%%%%%
%%%%%%

Within this section, we assume $p_g(X)=3$, $d_1=2$ and keep the same set up as in \ref{setup}.  The general fiber $C$ of the induced fibration $f:X'\lrw \Gamma$ is a curve of genus $g\geq 2$. Let us recall the following theorem.

\begin{thm}\label{Mm1}
Let $X$ be a minimal projective 3-fold of general type. Assume $p_g(X)=3$.  Then
\begin{itemize}
\item[(i)] (\cite[Theorem 1.5(1)]{MA}) $K_X^3\geq 1$.

\item[(ii)]  (\cite[Theorem 4.1]{C14}) when $K_X^3>1$ and $d_1=2$,  $\varphi_{5,X}$ is birational.
\end{itemize}
\end{thm}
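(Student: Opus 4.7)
My plan is to handle the two parts of Theorem~\ref{Mm1} separately, both via the fibration-and-restriction strategy of Section~2.

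For part (i), since $p_g(X) = 3$ the canonical image lies in $\bP^2$, so $d_1 \in \{1, 2\}$. When $d_1 = 2$, $|M|=\mathrm{Mov}|K_{X'}|$ is not composed of a pencil; I would pick a generic irreducible element $S$ and combine $K_X^3 \ge (\pi^*K_X)^2 \cdot S$ with Theorem~\ref{KaE} applied to $S \subset X'$, obtaining $\pi^*(K_X)|_S \ge \tfrac{1}{2}\sigma^*(K_{S_0})$ and then estimating the intersection numbers on $S_0$ from its invariants, which are constrained by $p_g(X) = 3$. When $d_1 = 1$, the fibration $f : X' \to \bP^1$ satisfies $\zeta \ge p_g(X) - 1 = 2$, and Inequality~\eqref{cri} gives $\pi^*(K_X)|_F \ge \tfrac{2}{3}\sigma^*(K_{F_0})$ for a general fiber $F$, so $K_X^3 \ge 2 \cdot (\tfrac{2}{3})^2 K_{F_0}^2 = \tfrac{8}{9} K_{F_0}^2$. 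In either regime the bound $K_X^3 \ge 1$ reduces to Noether-type inequalities for the restricted surface, combined with the lower bound on its geometric genus forced by $p_g(X) = 3$.

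For part (ii), I would apply Theorem~\ref{key-birat} with $m_0 = 1$ and $m = 5$. Since $d_1 = 2$ and $p_g = 3$, one has $\Sigma = \bP^2$, $\zeta = 1$, and the restricted system $|{M|_S}|$ is composed of the pencil cut out by lines in $\bP^2$; take $|G|$ to be its moving part with generic element $C$ of genus $g \ge 2$. From $\pi^*(K_X) \simQ S + E'$ one reads off $\mu \ge 1$, and from $\pi^*(K_X)|_S \ge S|_S \ge C$ one gets $\beta \ge 1$. Inequality~\eqref{kieq2} then gives $\xi \ge \tfrac{2g-2}{3}$. When $g \ge 3$ this yields $\xi \ge \tfrac{4}{3}$, whence $\alpha(5) = 2\xi > 2$, and Lemmas~\ref{s1} and~\ref{s2} combined with Theorem~\ref{key-birat} deliver birationality of $\varphi_{5,X}$.

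The main obstacle is the residual case $g(C) = 2$, where the naive estimate is $\xi \ge \tfrac{2}{3}$, giving only $\alpha(5) \ge \tfrac{4}{3}$. Here I would exploit the strict inequality $K_X^3 > 1$ in two ways: directly through Inequality~\eqref{kcube}, which yields $K_X^3 \ge \mu\beta\xi$; and through the sharper estimate $K_X^3 \ge (\pi^*K_X)^2 \cdot S \ge 2a\xi$ coming from $S|_S \sim aC$ on $S$. Together these force at least one of $\mu$, $\beta$, or $a$ to strictly exceed its minimum value; plugging the improvement back into Inequality~\eqref{kieq1} upgrades $\xi$ enough that $\alpha(5) > 2$. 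The delicate bookkeeping is across the subcases $a \ge 2$, $\mu > 1$ (so $|M|$ contributes more than one copy of $S$), and $\beta > 1$ (detected by a direct analysis of effective divisors on $S$); the borderline $K_X^3 = 1$, $g(C) = 2$ is precisely what Theorem~\ref{Mm1} leaves out and what ultimately appears as the exceptional case in Theorem~\ref{thm_pg3}(2).
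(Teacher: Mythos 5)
First, a point of comparison: the paper does not prove Theorem \ref{Mm1} at all. Both items are quoted results --- (i) is \cite[Theorem 1.5(1)]{MA} and (ii) is \cite[Theorem 4.1]{C14} --- recalled at the start of Section 5 before the paper analyzes the boundary case $K_X^3=1$. So your proposal is not reconstructing an argument contained in this paper but attempting to reprove two substantial external theorems, and judged on its own terms it has genuine gaps in both parts.

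For (i), your numerics simply do not reach $1$. In the case $d_1=1$, $\Gamma\cong\bP^1$, your own computation gives $K_X^3\geq 2\cdot(\tfrac{2}{3})^2K_{F_0}^2=\tfrac{8}{9}K_{F_0}^2$, and the critical fiber type is precisely the $(1,2)$-surface, where $K_{F_0}^2=1$: you land at $\tfrac{8}{9}<1$, and no ``Noether-type inequality'' can help a fiber whose minimal model already has $K^2=1$. (The true bound in this case is $K_X^3\geq\tfrac{4}{3}$, quoted as (c3) in Section 6 from \cite[3.7]{MA}.) Similarly for $d_1=2$, Noether plus $p_g(S)\geq 3$ gives only $K_{S_0}^2\geq 2$, hence $\tfrac14K_{S_0}^2\geq\tfrac12$. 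The missing idea is not a surface inequality but a bootstrap: iterating Inequality \eqref{kieq1} over increasing $m$ (with $g(C)=2$, $\mu,\beta$ at their minimal values) pushes $\xi\geq\tfrac{m-1}{m}$ for every $m$, hence $\xi\geq 1$, and then \eqref{kcube} gives $K_X^3\geq\mu\beta\xi\geq 1$ (resp. $\geq\tfrac43$ when $\mu\geq 2$, $\beta\geq\tfrac23$); this is exactly the mechanism behind the paper's remark that $K_X^3=1$ forces $g(C)=2$ and $\xi=1$, and behind Lemma \ref{M1}.

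For (ii), the residual case $g(C)=2$ contains a logical reversal. From $K_X^3>1$ and $K_X^3\geq\mu\beta\xi$ you cannot conclude $\mu\beta\xi>1$: the inequality points the wrong way, so a strict lower bound on the volume forces nothing about $\mu$, $\beta$, $a$ or $\xi$. Nothing in your toolkit excludes the configuration $\mu=\beta=\xi=1$, $a=1$, $g(C)=2$ with $K_X^3>1$, and in that configuration $\alpha(5)=2$ fails the threshold of Theorem \ref{key-birat}, so your method stalls exactly where the theorem has content (the $g\geq 3$ subcase you settle correctly never uses $K_X^3>1$). What \cite[Theorem 4.1]{C14} actually requires is the contrapositive: assuming $\varphi_{5,X}$ is non-birational, one must derive the \emph{upper} bound $K_X^3\leq 1$, which needs geometric arguments beyond the numerical bookkeeping --- Hodge-index/Zariski-decomposition analysis on $S$ and the Horikawa-type structure of $|K_{S_0}|$, of the kind the paper deploys in Lemma \ref{lem_pg}, Lemma \ref{pg333} and Proposition \ref{pg444} when it treats the converse direction $K_X^3=1\Rightarrow\varphi_{5,X}$ non-birational.
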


In fact, by the argument in  \cite[3.2]{MA}, $K_X^3=1$ implies $g(C)=2$ and $\xi=(\pi^*(K_X)\cdot C)=1$.

{}From now on within this section, we always assume that $K_X^3=1$.  Take $|G|=|S|_S|$, which means $\beta\geq 1$. Since
\begin{equation}1=K_X^3\geq (\pi^*(K_X)|_S\cdot S|_S)\geq (\pi^*(K_X)|_S\cdot \beta C)\geq \beta,
\label{122}\end{equation}
 we see $\beta=1$.  This also implies that $|G|$ is composed of a free rational pencil on $S$ and that $h^0(G)=h^0(C)=2$.
Recall that $\sigma: S\lrw S_0$ is the contraction onto the minimal model.
By Theorem \ref{KaE}, we have
\begin{equation*} |4K_{X'}||_S\lsgeq |2(K_{X'}+S)||_S=|2K_S|\lsgeq |2\sigma^*(K_{S_0})|,
\end{equation*}
which directly implies
\begin{equation}\label{eq_KS0}
\pi^*(K_X)|_S \geq \frac{1}{2} \sigma^*(K_{S_0}).
\end{equation}

\begin{lem}\label{lem_pg}  Assume $p_g(X)=3$, $d_1=2$ and $K_X^3=1$.
Then $$p_g(S)=4,\  q(S)=0$$ and $K^2_{S_0}= 4$.
\end{lem}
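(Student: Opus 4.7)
The plan is to handle the three assertions in the order $K_{S_0}^2\le 4$, then $3\le p_g(S)\le 4$, and finally $q(S)=0$, so that each step feeds the next.

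First I would bound $K_{S_0}^2$ by intersecting the top self-intersection $K_X^3=1$ with $S$. Since $m_0=1$ and $d_1=2$ force $\zeta=1$ and $\mu(S)\ge 1$, the divisor $\pi^*(K_X)-S$ is $\bQ$-linearly equivalent to an effective one, and nefness of $\pi^*(K_X)$ combined with \eqref{eq_KS0} gives
$$1=K_X^3\ge (\pi^*(K_X))^2\cdot S=(\pi^*(K_X)|_S)^2\ge \tfrac14 K_{S_0}^2.$$
Note also that $S$ is of general type: $\pi^*(K_X)|_S\ge \beta C=C$ forces $(\pi^*(K_X)|_S)^2\ge \xi=1>0$, so $\pi^*(K_X)|_S$ is nef and big, whence $K_S$ is big. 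Noether's inequality on the minimal model $S_0$ then yields $p_g(S)=p_g(S_0)\le \tfrac12 K_{S_0}^2+2\le 4$.

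For the lower bound $p_g(S)\ge 3$ I would exploit that $p_g(X)=3$ and $d_1=2$ force the canonical image $\Sigma\subset\bP^2$ to be $\bP^2$ itself. Then $|M|$ is a base point free system defining a surjective morphism $X'\to\bP^2$, and pulling back conics yields $h^0(X',2M)\ge h^0(\bP^2,\OO(2))=6$. Writing $K_{X'}=M+E$ with $E\ge 0$ we get $h^0(K_{X'}+S)\ge h^0(2M)\ge 6$, and the adjunction sequence
$$0\to \omega_{X'}\to \omega_{X'}(S)\to \omega_S\to 0$$
then gives $p_g(S)\ge h^0(K_{X'}+S)-p_g(X)\ge 6-3=3$. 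Finally, $q(S)=0$ drops out of Debarre's inequality \cite{D}: if $q(S)\ge 1$, then $K_{S_0}^2\ge 2p_g(S)\ge 6$, contradicting $K_{S_0}^2\le 4$.

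The main obstacle is the lower bound $p_g(S)\ge 3$, since it is the only step that uses specific information about the canonical image (namely that $\Sigma=\bP^2$) and requires producing a $6$-dimensional linear subsystem of $|K_{X'}+S|$ via pullback of conics; the remaining assertions are short chains of standard surface-theoretic inequalities combined with basic intersection theory on $X'$.
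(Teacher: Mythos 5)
Your proposal is correct, and on three of the four assertions it coincides with the paper's own proof: the bound $K_{S_0}^2\le 4$ comes from exactly the same intersection computation (\eqref{eq_KS0} combined with $1=K_X^3\ge (\pi^*(K_X))^2\cdot S=(\pi^*(K_X)|_S)^2$), $p_g(S)\le 4$ from Noether's inequality on $S_0$, and $q(S)=0$ from Debarre's inequality played against $K_{S_0}^2\le 4$. The genuine difference is the lower bound $p_g(S)\ge 3$. The paper argues on $S$ itself: the computation \eqref{122} forces $\beta=1$, hence $|G|=|S|_S|$ is a linear pencil with irreducible generic member $C$ and $h^0(S,C)=2$, and then $K_S\sim (K_{X'}+S)|_S\ge 2C$ gives $p_g(S)\ge h^0(S,2C)\ge 3$ (the two sections of $\OO_S(C)$ produce three independent sections of $\OO_S(2C)$). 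You instead work upstairs on $X'$: since $d_1=2$ forces $\Sigma=\bP^2$ and $M\sim \tilde{g}^*\OO_{\bP^2}(1)$, pulling back conics gives $h^0(X',2M)\ge 6$, and the restriction sequence $0\to \omega_{X'}\to \omega_{X'}(S)\to \omega_S\to 0$ yields $p_g(S)\ge h^0(K_{X'}+S)-p_g(X)\ge 3$. The two arguments are close in spirit — both exploit that the canonical image is all of $\bP^2$ — but yours has the mild advantage of not needing the structure of the restricted pencil (i.e.\ the $\beta=1$ analysis of \eqref{122}, which is what guarantees $h^0(S,C)=2$ in the paper's version), whereas the paper's is shorter given its standing setup and keeps the pencil $|C|$ in play, which it needs again in Lemma \ref{pg333} and Proposition \ref{pg444}.
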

\begin{proof}
We have $K_S\sim (K_{X'}+S)|_S\geq 2C$ and $h^0(S,C)=2$. Hence $p_g(S)\geq h^0(S,2C)\geq 3$.
%Now suppose $p_g(S)=2$. In this case the canonical map of $S$: $\phi$  is a map onto $\bP^1$. But, by adjunction, $K_S \geq 2C$ so $h^0(S,K_S) \geq h^0(S,2C) = h^0(\bP^1,\phi_*2C)=h^0(\bP^1, \mathcal{O}(2))=3$. Therefore this case is impossible.
By \eqref{eq_KS0}, we have $K^2_{S_0}\leq 4$. On the other hand, the Noether inequality (see \cite[Chapter VII.3]{BPV}) implies $p_g(S)=p_g(S_0) \leq 4$. Finally, by Debarre  \cite{D}, we obtain $q(S)=q(S_0)=0$. By the Noether inequality and $K_{S_0}^2\le 4$, it is sufficient to prove that $p_g(S)=4$.

Suppose that  $p_g(S)\neq 4$. Then we have $p_g(S)=3$. Note that we have $K_S\geq \sigma^*(K_{S_0})\geq 2C$ and $h^0(S,2C)=3$. So $|K_{S_0}|$ is composed of a pencil of curves.  By Lemma \ref{ll3}, we have $$(K_{S_0}\cdot \sigma_*(C))=(\sigma^*(K_{S_0})\cdot C)\geq 2. $$  
Since $g(C)=2$, we have $(\sigma^*(K_{S_0})\cdot C)\le (K_S\cdot C)=2$. We deduce that $(K_{S_0}\cdot \sigma_*(C))=2$. Since $K_{S_0}\ge 2\sigma_*(C)$, we have 
$$
K_{S_0}^2\ge 2(K_{S_0}\cdot\sigma_*(C))\ge 4.
$$
By Hodge index theorem, we have $\sigma_*(C)^2\le 1$. Note that $((K_{S_0}+\sigma_*(C))\cdot\sigma_*(C))$ is a positive even integer by adjunction formula. Thus we have $\sigma_*(C)^2=0$ and $((K_{S_0}+\sigma_*(C))\cdot\sigma_*(C))=2$ . In particular, the linear system $|\sigma_*(C)|$ is base point free and is composed of a pencil of curves of genus $2$, i.e., $|\sigma_*(C)|$ is composed of a free pencil of curves of genus $2$. 
Thus $\Mov|K_{S_0}|$ is composed of a free pencil of curves of genus $2$. By Xiao (\cite[Chapter 5, Corollaire 1]{X85}), one has $K^2_{S_0} \geq 4p_g(S_0)-6\geq 6$, a contradiction.

\end{proof}

%\begin{lem}\label{pg333} Under the same condition as that of Lemma \ref{lem_pg}, we have $p_g(S)=4$.
%\end{lem}
%\begin{proof} We have $K_S\geq \sigma^*(K_{S_0})\geq 2C$ and $h^0(S,2C)=3$. So $|K_{S_0}|$ is composed of a pencil of curves.  By Lemma \ref{ll3}, we have $$(K_{S_0}\cdot \sigma_*(C))=(\sigma^*(K_{S_0})\cdot C)\geq 2. $$
%Since $g(C)=2$, $|\sigma_*(C)|$ must be a free pencil. Thus $|K_{S_0}|$ is composed of a free pencil of curves of genus $2$. By Xiao (\cite{X85}), one
%has $K^2_{S_0} \geq 4p_g(S_0)-6\geq 6$, a contradiction.
%\end{proof}

\begin{prop}\label{pg444} Under the same condition as that of Lemma \ref{lem_pg}, Assume $p_g(S)=4$.  Then $|K_{S_0}|$ induces a double cover $\tau \colon S_0 \longrightarrow \mathbb{F}_2$ ($\mathbb{F}_2$ denotes the Hirzebruch ruled surface with a $(-2)$-section) and $\varphi_{5,X}$ is non-birational.
\end{prop}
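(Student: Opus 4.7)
The plan is to prove the two assertions consecutively.

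For the first assertion, Lemma~\ref{lem_pg} gives $K_{S_0}^2\le 4$, and combining with Noether's inequality $K^2_{S_0}\ge 2p_g(S_0)-4=4$ we obtain $K_{S_0}^2=4$. Thus $S_0$ sits on the Noether line with $p_g=4$, and by Horikawa's theorem the canonical map $\phi_{|K_{S_0}|}$ is a generically $2$-to-$1$ morphism onto a surface of minimal degree $p_g-2=2$ in $\mathbb{P}^{p_g-1}=\mathbb{P}^3$, i.e.\ onto a quadric. To identify this quadric with a cone (whose minimal desingularisation is $\mathbb{F}_2$) rather than the smooth quadric $\mathbb{P}^1\times\mathbb{P}^1$, I would exhibit a codimension-one subsystem $|2C|\subset|K_S|$. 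The pencil $|C|$ on $S$ satisfies $h^0(C)=2$ and $C^2=0$, so $h^0(2C)=3$, while $h^0(K_S)=p_g=4$. The inclusion $|2C|\subset|K_S|$ reduces to showing $K_S-2C\ge 0$, which I aim to deduce from the adjunction $K_S=(K_{X'}+S)|_S$ combined with $\pi^*(K_X)|_S\ge C$ (using $\beta=1$) and $S|_S\sim C$. Once this is in hand, the factorization $\phi_{|2C|}\colon S\to\mathbb{P}^1\hookrightarrow\mathbb{P}^2$ through the $2$-uple Veronese forces the canonical image $\phi_{|K_{S_0}|}(S_0)\subset\mathbb{P}^3$ to project onto a conic and therefore to lie on a quadric cone; being two-dimensional, it equals this cone, so $\phi_{|K_{S_0}|}$ factors through the minimal desingularisation $\mathbb{F}_2$ as a double cover $\tau\colon S_0\to\mathbb{F}_2$ (the degree two coming from $K_{S_0}^2=4$ versus $\deg(\mathrm{cone})=2$).

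For the second assertion (non-birationality of $\varphi_{5,X}$), the strategy is to exploit the covering involution $\iota$ of $\tau$. Its restriction to a general $C_0\in|C_0|=\tau^*|\text{ruling}|$ is the hyperelliptic involution of the genus-$2$ curve $C_0$. From $4K_{X'}\sim S+E'$ with $E'\ge 0$ we get $5K_{X'}\sim K_{X'}+S+E'$, so via adjunction $|5K_{X'}||_S\supset|K_S|+(\text{fixed part})$, and $|K_S|$ is entirely $\iota$-invariant as the pullback of $|\sigma_0|$ along $\tau\sigma$. I would then argue that the \emph{entire} restriction $|5K_{X'}||_S$ lies in the $\iota$-invariant part of $H^0(S,5K_{X'}|_S)$, so $\varphi_{5,X}|_S$ factors through $S/\iota$ and is generically $2$-to-$1$; hence $\varphi_{5,X}$ itself is non-birational.

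The main obstacle is the final step: verifying that \emph{every} section of $|5K_{X'}||_S$, not only the canonical contribution, pulls back from $\mathbb{F}_2$. This likely requires either lifting $\iota$ to a birational involution of $X'$ compatible with the genus-$2$ fibration $f$ and propagating $\iota$-invariance globally to $H^0(X',5K_{X'})$, or a precise dimension count comparing $h^0(S,5K_{X'}|_S)$ with the dimension of the $\iota$-invariant subspace of $H^0(S,K_S+\text{(effective)})$ built from the pullback structure along $\tau\sigma$.
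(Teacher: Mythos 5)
Your two assertions fare very differently, so I address them separately.

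On the identification of the double cover, your route is correct and genuinely different from the paper's. Both you and the paper reduce, via Noether's inequality ($K_{S_0}^2=4$) and Horikawa's theorem \cite[Theorem 1.6]{H1}, to the dichotomy: $S_0$ is a canonical double cover of either $\bP^1\times\bP^1$ or $\mathbb{F}_2$. The paper excludes the smooth quadric numerically: writing $K_{S_0}\equiv C_1+C_2$ with $C_i$ the pullbacks of the two rulings and setting $\widehat{C}=\sigma_*C$, it plays the three genus-two free pencils against each other to conclude that necessarily $C_1\equiv\widehat{C}$ and $(C_2\cdot\widehat{C})=2$, and then reaches the contradiction $2=(K_{S_0}\cdot C_2)\geq 2(\widehat{C}\cdot C_2)\geq 4$. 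You instead trap the canonical image inside a rank-three quadric by projecting along the subsystem $|2C|+D\subset|K_S|$, whose image is a conic; since by Horikawa the canonical image is an irreducible quadric surface, it must equal that cone. This is valid and arguably more geometric. One caution: do not deduce $K_S-2C\geq 0$ from ``$\beta=1$'' --- $\beta$ is a supremum of $\bQ$-divisor inequalities, which is neither necessarily attained nor yields an inclusion of linear systems. The integral statement you need, $K_S\sim(K_{X'}+S)|_S\geq 2C$, follows from $K_{X'}\geq M\sim S$ and $S|_S\geq C$, and is exactly what is established in the proof of Lemma \ref{lem_pg}, so it is available to you.

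On the non-birationality of $\varphi_{5,X}$, your proposal has a genuine gap, which you yourself flag. The entire content of the statement is that the sections of $|5K_{X'}|$ restricted to a general $C$ fail to separate the points of the hyperelliptic involution, and neither of your suggested remedies can deliver this. The covering involution $\iota$ lives on the single fibre $S$; there is no reason it lifts to a birational involution of $X'$, and the paper never produces such a lift, so ``propagating $\iota$-invariance globally to $H^0(X',5K_{X'})$'' is not available. A dimension count cannot work either without a quantitative ingredient absent from your proposal: an upper bound on $\deg(M_5|_C)$. Indeed $(5\pi^*(K_X)|_S\cdot C)=5$, and a moving linear system of degree $5$ on the genus-two curve $C$ would typically be birational, so nothing in your setup forces factorization through $\iota$. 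The paper's proof produces exactly this bound, and only along $C$ rather than on all of $S$: the Hodge index theorem upgrades \eqref{eq_KS0} to the equality $\pi^*(K_X)|_S\equiv\frac{1}{2}\sigma^*(K_{S_0})$; the double-cover geometry ($K_{S_0}\sim 2C_0+A_1+A_2$ with $\tau^*\Delta_0=A_1+A_2$) then pins down the horizontal part of $\pi^*(K_X)|_S$ as $C'+\frac{1}{2}(\hat{A_1}+\hat{A_2})$; rounding down $5\pi^*(K_X)|_S$ therefore loses one unit of degree on the two $(-2)$-curves, giving $(M_5|_S\cdot C)\leq 4$. Combined with the Kawamata--Viehweg computation $|5K_{X'}||_C\lsgeq|K_C+(\hat{A_1}+\hat{A_2})|_C|=|2K_C|$, this forces $|M_5||_C=|2K_C|$, which is composed with the hyperelliptic involution, whence $\varphi_{5,X}$ is non-birational. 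Without the degree bound --- hence without the Hodge-index equality and the analysis of the horizontal part --- the invariance you aim for cannot be established, so the second half of your proposal does not constitute a proof.
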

\begin{proof}  Clearly we have $K_{S_0}^2=4$ by the Noether inequality.
By our assumption, $|C|$ is a free pencil on $S$ and $\sigma^*(K_{S_0})\geq 2C$.  If $|K_S|$ is composed of a pencil of curves, then $\Mov|K_S|=|3C|$. Hence
$$\pi^*(K_X)|_S\geq \frac{1}{2}\sigma^*(K_{S_0})\geq \frac{3}{2}C,$$
which means $\beta\geq \frac{3}{2}$, a contradiction. So $|K_S|$ is not composed of a pencil of curves.  In fact, such surfaces have been classified by Horikawa (see \cite[Theorem 1.6(iii),(iv)]{H1}).  Namely, $S$ belongs to one of the following types:
\begin{enumerate}
\item the canonical map $\Phi_{|K_{S_0}|}$ gives a double cover of $S_0$ onto $\bP^1 \times \bP^1$ whose branch locus is linearly equivalent to $6l_1+6l_2$, where $l_1$ and $l_2$ are two natural line classes on $\bP^1 \times \bP^1$ with $l_1^2=l_2^2=0$;
\item $\Phi_{|K_{S_0}|}$ induces a double cover  $\tau \colon S_0 \longrightarrow \mathbb{F}_2$ whose branch locus is linearly equivalent to $6\Delta_0+12\gamma$, where $\Delta_0$ is the unique section with $\Delta_0^2=-2$ and $\gamma$ is a fibre of the ruling of $\mathbb{F}_2$ with $\gamma^2=0$.
\end{enumerate}
\medskip

\noindent{\bf Case (1) is impossible}.  By the ramification formula, one has
$K_{S_0} \equiv C_1 + C_2$, where $C_i$ is the pullback of $l_i$ for $i=1,2$. On the other hand, we have a genus 2 curve class $\widehat{C}=\sigma_*C$.  With the similar reason to that in the proof of Lemma \ref{lem_pg}, $|\widehat{C}|$ is a free pencil on $S_0$.  Noting that $K_{S_0}\geq 2\widehat{C}$, we have
$$2=(K_{S_0}\cdot \widehat{C})=(C_1\cdot \widehat{C})+(C_2\cdot \widehat{C}).$$
Here we have three free pencils of curves of genus 2. If $C_i\not\equiv \widehat{C}$ for some $i$, then $(C_i\cdot\widehat{C})\geq 2$ as $\widehat{C}$ is moving on $C_i$.  So the only possibility is that $C_1\equiv \widehat{C}$ while $(C_2\cdot \widehat{C})=2$.
But then one has
$$2=(K_{S_0}\cdot C_2)\geq 2(\widehat{C}\cdot C_2)\geq 4,$$
a contradiction.
\medskip

\noindent{\bf Case (2) implies the non-birationality of $\varphi_{5,X}$}.  By \eqref{122} we have $(\pi^*(K_X)|_S)^2=1$. On the other hand, we have
$$2=2(\pi^*(K_X)|_S)^2\geq (\sigma^*(K_{S_0})\cdot \pi^*(K_X)|_S)\geq \sqrt{K_{S_0}^2\cdot (\pi^*(K_X)|_S)^2}=2,$$
which means, by the Hodge Index Theorem, that
\begin{equation}\label{eqqq}\pi^*(K_X)|_S\equiv \frac{1}{2}\sigma^*(K_{S_0}).
\end{equation}
According to Horikawa, the double cover $\tau\colon S_0 \longrightarrow \mathbb{F}_2$ is branched over a smooth divisor $B \in |6 \Delta_0 + 12\gamma|$. By construction $(\Delta_0 \cdot B)=0$ and $\tau^* \Delta_0=A_1 + A_2$ with $A_i^2=-2$ for $i=1,2$ and $(A_1\cdot A_2)=0$. Denote by $C_0=\tau^*\gamma$. Then, by the ramification formula, we have $K_{S_0}\sim 2C_0+A_1+A_2$.
Let us pullback everything to $S$ and take $\tilde{C_0}=\sigma^*(C_0)$, $\tilde{A_i}=\sigma^*(A_i)$ for $i=1,2$.  Then $\sigma^*(K_{S_0})\sim 2\tilde{C_0}+\tilde{A_1}+\tilde{A_2}$.
For the similar reason, we see $C\equiv \tilde{C_0}$ since $(\sigma^*(K_{S_0})\cdot C)=2$.  Thus $C$ and $\tilde{C_0}$ are in the same curve class.  Thus we have
\begin{equation}\label{eqpq}
\pi^*(K_X)|_S\equiv C+\frac{1}{2}(\tilde{A_1}+\tilde{A_2}).\end{equation}
Denote by $\hat{A_i}$ ($i=1,2$) the strict transform of $A_i$ on $S$.  Then $(\sigma^*(K_{S_0})\cdot \hat{A_i})=0$ for $i=1,2$ since
$(\sigma^*(K_{S_0})\cdot \tilde{A_i})=0$.

Let us denote by $\iota$ the restriction map $f|_S:S\lrw f(S)$. The general fiber of $\iota$
is in the same class of $C$.  Since $\pi^*(K_X)\geq S$, we may write $\pi^*(K_X)=\hat{S}+E_1'$ where $\hat{S}$ is certain special member of $|M|$ and $E_1'$ is an effective $\bQ$-divisor. Denote by $C'=\hat{S}|_S$. Clearly $C'\sim C$. Then
$$\pi^*(K_X)|_S=C'+J_v+J_h$$
where $J_v$ and $J_h$ are effective $\bQ$-divisor, $J_v$ is vertical with respect to $\iota$ while $J_h$ is horizontal with respect to $\iota$. Since $\pi^*(K_X)|_S\leq K_S$ and $(K_S\cdot C)=2$, $J_h$ has at most two irreducible components. Suppose $\hat{A_i}$ is not any component of $\text{Supp}(J_h)$. Then
$$0=(\hat{A_i}\cdot \pi^*(K_X)|_S)\geq (\hat{A_i}\cdot C')=(\hat{A_i}\cdot \tilde{C}_0)=(A_i\cdot C_0)>0, $$
a contradiction.  Hence it asserts that $J_h=a\hat{A_1} +b\hat{A_2}$ with  $a, b>0$ and
$a+b=1$.  Now since $\sigma_*(\pi^*(K_X)|_S)\equiv \frac{1}{2}K_{S_0}$ and the $A_i$ is a horizontal $(-2)$-curve, one gets $a\geq \frac{1}{2}$ and $b\geq \frac{1}{2}$, whence $a=b=\frac{1}{2}$.  In a word, we see that
\begin{equation}
5\pi^*(K_X)|_S=5C'+\frac{5}{2}(\hat{A_1}+\hat{A_2})+5J_v.\label{5KK}
\end{equation}
Since
$$M_5|_S\leq \rounddown{5C'+\frac{5}{2}(\hat{A_1}+\hat{A_2})+5J_v}
=5C'+2\hat{A_1}+2\hat{A_2}+\rounddown{5J_v}$$
and $\rounddown{5J_v}$ is vertical with respect to $f|_S$,
we see that $(M_5|_S\cdot C)\leq 4$.  On the other hand, by our assumption, $(\hat{A_1}+\hat{A_2})|_C\sim K_C$.  By the Kawamata-Viehweg vanishing theorem once more,  we get the following two relations:
\begin{eqnarray}
|5K_{X'}||_S&\lsgeq&|K_{X'}+\roundup{3\pi^*(K_X)}+S||_S\notag\\
&\lsgeq&|K_S+\roundup{3\pi^*(K_X)|_S}|\notag\\
&\lsgeq&|K_S+\roundup{3\pi^*(K_X)|_S-\frac{1}{2}(\hat{A_1}+\hat{A_2})-J_v}|\label{mee1}
\end{eqnarray}
and
\begin{eqnarray}
&&|K_S+\roundup{3\pi^*(K_X)|_S-\frac{1}{2}(\hat{A_1}+\hat{A_2})-J_v}||_C\notag\\
&=&|K_C+(\hat{A_1}+\hat{A_2})|_C|=|2K_C|.\label{mee2}
\end{eqnarray}
By \eqref{mee1} and \eqref{mee2}, we have $|M_5||_C\lsgeq |2K_C|$. Note that $(M_5|_S\cdot C)\le 4$. We deduce that
$|M_5||_C=|2K_C|$. Since $C$ is a smooth curve of genus $2$, $|2K_C|$ is not birational. Note that the curve class parameterized by $C$ covers $X'$. Hence $\varphi_{5,X}$ is not birational.
\end{proof}

\begin{thm}\label{thm_d2}
Let $X$ be a minimal projective 3-fold of general type with  $p_g(X)=3$ and $d_1 = 2$. Then $\varphi_{5,X}$ is non-birational if and only if $K^3_X=1$.
 \end{thm}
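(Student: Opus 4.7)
The plan is to prove the equivalence by assembling the results that have already been assembled in Section \ref{pg3d=2}, since all the heavy lifting is behind us. For the \emph{only if} direction I will simply quote Theorem \ref{Mm1}: part (i) gives the unconditional bound $K_X^3 \geq 1$, and part (ii) says that the strict inequality $K_X^3 > 1$ already forces $\varphi_{5,X}$ to be birational. Hence, if $\varphi_{5,X}$ fails to be birational, the only numerical possibility left is $K_X^3 = 1$.

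For the \emph{if} direction I would assume $K_X^3 = 1$ and chain together the preliminary analysis with Lemma \ref{lem_pg}, Lemma \ref{pg333} and Proposition \ref{pg444}. The preliminary computations in Section \ref{pg3d=2} have already pinned down the numerics: $g(C)=2$, $\xi = 1$, $\beta = 1$, $|G|$ is a free rational pencil on a generic $S$, and \eqref{eq_KS0} gives $\pi^*(K_X)|_S \geq \tfrac{1}{2}\sigma^*(K_{S_0})$. Lemma \ref{lem_pg} then restricts $p_g(S) \in \{3,4\}$, Lemma \ref{pg333} eliminates $p_g(S)=3$ via Xiao's bound for genus-$2$ pencils, so necessarily $p_g(S)=4$. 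Proposition \ref{pg444} identifies $S_0$ as a Horikawa double cover of $\mathbb{F}_2$ (the $\mathbb{P}^1\times\mathbb{P}^1$ case being ruled out by a conflict between the three genus-$2$ pencil classes) and runs a Kawamata--Viehweg vanishing computation that shows $|M_5||_C = |2K_C|$, which is a $g^1_4$ on $C$ and therefore cannot separate the hyperelliptic conjugate points. Non-birationality of $\varphi_{5,X}$ then follows.

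I expect no real obstacle at this stage: the statement is essentially a bookkeeping consolidation of prior results. The only thing worth double-checking is that the hypotheses $p_g(X)=3$, $d_1=2$, $K_X^3=1$ do feed correctly into each lemma (they do, by construction of the setup in Section \ref{pg3d=2}), and that one has genuinely exhausted the dichotomy $p_g(S) \in \{3,4\}$ coming from Lemma \ref{lem_pg}. The hardest ingredient hiding behind the curtain is the explicit identification of the restriction $\pi^*(K_X)|_S \equiv \tfrac{1}{2}\sigma^*(K_{S_0})$ via Hodge index and the ensuing $|M_5||_C = |2K_C|$ computation in Proposition \ref{pg444}, but that work has already been carried out.
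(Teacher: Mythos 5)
Your proposal is correct and follows exactly the paper's own route: the paper's proof of Theorem \ref{thm_d2} is precisely the assembly of Theorem \ref{Mm1} (giving $K_X^3\geq 1$ and birationality when $K_X^3>1$) with Lemma \ref{lem_pg}, Lemma \ref{pg333} and Proposition \ref{pg444} (forcing $p_g(S)=4$, the Horikawa double cover of $\mathbb{F}_2$, and $|M_5||_C=|2K_C|$, hence non-birationality when $K_X^3=1$). Your account of the internal logic of each cited result, including why the degree-$4$ system $|2K_C|$ on the genus-$2$ curve $C$ fails to be birational, is accurate.
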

 %Assume $K^2_X = 1$. Then $\varphi_5$ is birational unless the generic irreducible element $S$ of the moving part of $|K_{X'}|$ is a surface with $p_g(S)=4$ and its minimal model $S_0$ has $K^2_{S_0}=4$ and is case (iv) of Horikawa classification \cite[Theorem 1.6]{H1}.
\begin{proof} This theorem follows directly from Theorem \ref{Mm1}, Lemma \ref{lem_pg} and Proposition \ref{pg444}.
\end{proof}

Theorem \ref{thm_d2} is sharp and here is an example due to Iano-Fletcher \cite{Flet}:

\begin{exmp}\label{ex_pg3d2}
The general hypersurface $X=X_{12} \subset \bP(1, 1, 1, 2, 6)$ of degree $12$ has the invariants $p_g = 3$ and $K^3_X = 1$, but $\varphi_{5,X}$ is non-birational.
Notice that in this example $X_{12}$ is a double cover of $\mathbb{P}(1,1,1,2)$ ramified over a sextic. The surface $S$ maps $2:1$ onto $\bP(1,1,2)$, which exactly fits into the situation described in the proof of Proposition \ref{pg444}\end{exmp}

%%%%%%%%%%%%%%%%%%%%%%%%%%%%%%%%%%%%%%%%%%%%%%%%%%%%%%%%%%%%%%%%%%

\section{\bf Threefolds of general type with $p_g=3$  (Part II)}

%%%%%%%%%%%%%%%%%%%%%%%%%%%%%%%%%%%%%%%%%%%%%%%%%%%%%%%%%%%%%%%%%
This section is devoted to studying the case $p_g(X)=3$ and $d_1=1$.   Keep the same notation as in \ref{setup}.
We have an induced fibration $f\colon  X' \longrightarrow \Gamma$ of which the general fiver $F$ is a nonsingular projective surface of general type.  Let $\sigma\colon F \longrightarrow F_0$ be the contraction onto the minimal model.

By \cite[Theorem 3.3]{IJM}, it is sufficient to assume $b=g(\Gamma)=0$, i.e. $\Gamma \cong \bP^1$. Note that $p_g(X) > 0$ implies $p_g(F)>0$.  Thus $F_0$ must be among the following types by the surface theory:
\begin{enumerate}
\item[(1)] $(K_{F_0}^2 ,p_g(F_0))=(1,2)$;
\item[(2)] $(K_{F_0}^2 ,p_g(F_0)) = (2,3)$;
\item[(3)]  other surfaces with $p_g(F_0) > 0$.
\end{enumerate}
By \cite[Theorem 4.3 and Claims 4.2.1, 4.2.2]{C14} it suffices to consider Case (1). It is well known that, for a $(1,2)$-surface, $|K_{F_0}|$ has one base point and that, after blowing up this point, $F$ admits a canonical fibration with a unique section which we denote by $H$. Denote by $C$ a general member in  $|G|=\Mov|\sigma^*(K_{F_0})|$. Set $m_0=1$.

\subsection{Several sufficient conditions for the birationality of $\varphi_{5,X}$}

\begin{lem}\label{M1} Let $X$ be a minimal projective 3-fold of general type with $p_g(X)=3$, $d_1=1$, $\Gamma\cong \bP^1$. Assume that $F$ is a $(1,2)$-surface. Then $\beta(1,|G|)\geq \frac{2}{3}$, $\xi(1, |G|)=1$ and $(\pi^*(K_X)|_F)^2\geq \frac{2}{3}$.
\end{lem}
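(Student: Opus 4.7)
The plan is to combine Kawamata's extension theorem with an adjunction computation on the unique $(-1)$-section of the $(1,2)$-surface fiber, and then close the gap for $\xi$ via an integrality argument based on \eqref{kieq1}. First, since $p_g(X)=3$, $d_1=1$, and $\Gamma\cong\bP^1$, the moving part $M$ of $|K_{X'}|$ satisfies $M\sim 2F$: indeed $f_*\OO_{X'}(M)$ is a rank-$1$ torsion-free sheaf on $\bP^1$ with $h^0=p_g(X)=3$, hence isomorphic to $\OO_{\bP^1}(2)$. Thus $\zeta(1)=2$ and $\mu\geq 2$. I will also use the standard structure of a $(1,2)$-surface: $\sigma^*(K_{F_0})=C+H$ where $C$ is a generic member of the free genus-$2$ pencil, $H$ is the unique $(-1)$-section, $C^2=0$, $C\cdot H=1$, $H^2=-1$, and $K_F=C+2H$. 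Then \eqref{cri} gives $\pi^*(K_X)|_F\geq \tfrac{2}{3}\sigma^*(K_{F_0})\geq \tfrac{2}{3}C$, hence $\beta(1,|G|)\geq 2/3$.

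Next I establish the upper bound $\xi\leq 1$ via adjunction along $H$. Since $F$ is a general fiber of $f$, $K_{X'}|_F=K_F$ and so $K_{X'}\cdot H=K_F\cdot H=-1$. Let $\mathcal{H}\subset X'$ be the irreducible horizontal divisor swept out by the $(-1)$-sections of the fibers; since $\mathcal{H}|_F=H$, one computes $\mathcal{H}\cdot H=H^2=-1$. Nefness of $\pi^*(K_X)$ then forces $E_\pi\cdot H\leq -1$, and the decomposition $E_\pi=a\mathcal{H}+E'$ with $\mathcal{H}\not\subset\text{Supp}(E')$ yields $-a+E'\cdot H\leq -1$ with $E'\cdot H\geq 0$, so $a\geq 1$. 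Hence $\mathcal{H}$ is $\pi$-exceptional of multiplicity at least one, $E_\pi|_F\geq H$, and $\pi^*(K_X)|_F\leq K_F-H=\sigma^*(K_{F_0})$; since $C$ is nef (it moves in a base-point-free pencil on $F$), $\xi=\pi^*(K_X)|_F\cdot C\leq \sigma^*(K_{F_0})\cdot C=1$.

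For the lower bound I apply \eqref{kieq1} with $\mu=2$, $\beta=2/3$, which gives $\alpha(m)=(m-3)\xi$ and $m\xi\geq 2+\roundup{(m-3)\xi}$ for every $m\geq 5$. If $\xi=p/q<1$ in lowest terms, choosing $m\geq 5$ with $(m-3)p\equiv 1\pmod{q}$ makes $(m-3)\xi$ have fractional part $1/q$; the inequality then reduces to $3\xi+1/q\geq 3$, i.e.\ $\xi\geq (3q-1)/(3q)$, contradicting $\xi\leq (q-1)/q$. Hence $\xi=1$, and the square estimate follows from $(\pi^*(K_X)|_F)^2\geq \tfrac{2}{3}\pi^*(K_X)|_F\cdot \sigma^*(K_{F_0})=\tfrac{2}{3}(\xi+\pi^*(K_X)|_F\cdot H)\geq \tfrac{2}{3}$, using $\xi=1$ and the nefness bound $\pi^*(K_X)|_F\cdot H\geq 0$. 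The most delicate step is the adjunction argument of the second paragraph: identifying the $(-1)$-section sweep $\mathcal{H}$ as $\pi$-exceptional with multiplicity $\geq 1$ is what provides the sharp ceiling $\pi^*(K_X)|_F\leq \sigma^*(K_{F_0})$, without which $\xi\leq 1$ would not be forced.
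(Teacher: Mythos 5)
Your bound $\beta(1,|G|)\geq\frac{2}{3}$ agrees with the paper's proof (both come from \eqref{cri} with $\zeta(1)=2$), and your congruence argument for $\xi\geq 1$ is correct and genuinely different from the paper, which at this point simply cites \cite[Claim 4.2.3]{C14}: applying \eqref{kieq1} with $\mu\geq 2$, $\beta\geq\frac{2}{3}$, $\deg(K_C)=2$, and choosing $m$ in the congruence class $(m-3)p\equiv 1\pmod q$ does force $3\xi\geq 3-\frac{1}{q}$, ruling out $\xi=p/q<1$. (Two small points: the hypothesis $\alpha(m)>1$ of \eqref{kieq1} must be checked, which holds since $\xi\geq\frac{2}{3}$ by \eqref{kieq2}, or simply because $m$ may be taken arbitrarily large in the congruence class; and rationality of $\xi$, which you use implicitly, holds because $K_X$ is $\bQ$-Cartier.) This part of your proposal is a nice self-contained replacement for the citation.

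The genuine gap is in your upper bound $\xi\leq 1$. The numerical inputs of the adjunction step --- $K_F=C+2H$, $H^2=-1$, hence $K_{X'}\cdot H=K_F\cdot H=-1$ and $\mathcal{H}\cdot H=-1$ --- are properties of the one-point blow-up of $F_0$ at the base point of $|K_{F_0}|$, not of the general fiber $F$ of $f$ on the model $X'$ fixed in \ref{setup}. That model is an essentially arbitrary tower of blow-ups making $\Mov|K_{X'}|$ base point free and certain supports simple normal crossing, and nothing prevents its centers from meeting the surface swept out by the sections: if the induced map from $F$ to the one-point blow-up contracts $k\geq 1$ exceptional curves lying over points of $H$, then $H^2=-1-k$ and, since $H\cong\bP^1$, $K_F\cdot H=k-1\geq 0$. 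Nefness of $\pi^*(K_X)$ then gives only $E_\pi\cdot H\leq k-1$, i.e. (in your notation $E_\pi=a\mathcal{H}+E'$) $a(1+k)\geq 1-k+E'\cdot H$, which no longer forces $a\geq 1$; moreover, even $a\geq 1$ would only yield $E_\pi|_F\geq H$, while what is needed is $E_\pi|_F\geq K_F-\sigma^*(K_{F_0})$, which also contains the extra exceptional curves. So the step ``nefness forces $E_\pi\cdot H\leq-1$'' can fail on legitimate models, and the proof as written does not establish $\pi^*(K_X)|_F\leq\sigma^*(K_{F_0})$. The fix is short and model-independent, and is what the paper's ``it is clear that $\xi\leq(\sigma^*(K_{F_0})\cdot C)=1$'' amounts to: $\pi^*(K_X)|_F$ is nef and $\pi^*(K_X)|_F\leq K_F$, so by maximality of the positive part in the Zariski decomposition $K_F=\sigma^*(K_{F_0})+E_\sigma$ (with $E_\sigma$ the $\sigma$-exceptional divisor) one gets $\pi^*(K_X)|_F\leq\sigma^*(K_{F_0})$, whence $\xi\leq(\sigma^*(K_{F_0})\cdot C)=K_{F_0}^2=1$. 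Your final display has the same defect in miniature, since it uses $\sigma^*(K_{F_0})=C+H$, which also fails under extra blow-ups; there it is harmless, because dotting $\pi^*(K_X)|_F\geq\frac{2}{3}C$ with the nef divisor $\pi^*(K_X)|_F$ already gives $(\pi^*(K_X)|_F)^2\geq\frac{2}{3}\xi=\frac{2}{3}$, exactly as in \eqref{2/3}.
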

\begin{proof} By our definition in \ref{setup}, one has $\zeta(1)=2$ and
\begin{equation}\label{eq_K2F}
\pi^*(K_X) \sim_{\bQ}2F+E'_1
\end{equation}
where $E'_1$ is an effective $\bQ$-divisor.
By \eqref{cri} (or, see \cite[Corollary 2.5]{MZ})
\begin{equation}\label{eq_p}
\pi^*(K_X)|_F\equiv\frac{2}{3}\sigma^*(K_{F_0})+Q'
\end{equation}
where $Q'$ is an effective $\bQ$-divisor on $F$. In particular, we have
$$\beta(1,|G|)\geq \frac{2}{3}. $$ This also implies that
\begin{equation}(\pi^*(K_X)|_F)^2\geq \frac{2}{3}(\pi^*(K_X)|_F\cdot C)=\frac{2}{3}\xi.\label{2/3}\end{equation}
Finally we know $\xi\geq1$ by \cite[Claim 4.2.3]{C14}. As it is clear that $\xi\leq (\sigma^*(K_{F_0})\cdot C)=1$, one has $\xi=\xi(1,|G|)=1$.
\end{proof}

\begin{lem}\label{lem_beta} Under the same condition as that of Lemma \ref{M1}, if $\beta(1, |G|) > \frac{2}{3}$, then $\varphi_{5,X}$ is birational.
\end{lem}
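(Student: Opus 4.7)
The plan is to apply the birationality criterion of Theorem \ref{key-birat} directly with $m=5$, taking the generic irreducible element $S$ of $|M|$ to be the general fiber $F$ of $f$, and $|G|=\Mov|\sigma^*(K_{F_0})|$ with generic irreducible element $C$ as in the setup. The three hypotheses of Theorem \ref{key-birat} will be verified from the numerical data already assembled, together with the sharper bound $\beta>\tfrac{2}{3}$ that we are now allowed to use.

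First I would record the basic parameters. Since $p_g(X)=3$ and the induced fibration is over $\bP^1$, one has $\zeta(1)\geq 2$, hence by \eqref{eq_K2F} we get $\mu=\mu(F)\geq 2$, so $\tfrac{1}{\mu}\leq \tfrac{1}{2}$. By Lemma \ref{M1} we have $\xi=1$, and by hypothesis $\beta>\tfrac{2}{3}$, i.e.\ $\tfrac{1}{\beta}<\tfrac{3}{2}$.

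Next, I would verify the three conditions of Theorem \ref{key-birat}. Condition (i), that $|5K_{X'}|$ distinguishes different generic irreducible elements of $|M|=|M_1|$, is immediate from Lemma \ref{s1} since $5\geq m_0+2=3$. Condition (ii), that $|5K_{X'}||_F$ distinguishes different generic irreducible elements of $|G|$, follows from Lemma \ref{s2}(1), provided $5>\tfrac{1}{\mu}+\tfrac{2}{\beta}+1$; with the bounds $\tfrac{1}{\mu}\leq\tfrac{1}{2}$ and $\tfrac{2}{\beta}<3$, the right-hand side is strictly less than $\tfrac{9}{2}<5$, so (ii) holds. Condition (iii), namely $\alpha(5)>2$, reduces to
\[
\alpha(5)=\Bigl(5-1-\tfrac{1}{\mu}-\tfrac{1}{\beta}\Bigr)\xi=4-\tfrac{1}{\mu}-\tfrac{1}{\beta}>4-\tfrac{1}{2}-\tfrac{3}{2}=2,
\]
which is exactly where the strict inequality $\beta>\tfrac{2}{3}$ is being used. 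Combining (i), (ii), (iii) with Theorem \ref{key-birat} then yields that $\varphi_{5,X}$ is birational onto its image.

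There is essentially no obstacle here: the strict gap $\beta>\tfrac{2}{3}$ is precisely what turns the borderline inequality $\alpha(5)\geq 2$ (which would be all one could get at $\beta=\tfrac{2}{3}$) into the strict $\alpha(5)>2$ needed by Theorem \ref{key-birat}, and the distinguishing hypotheses (i), (ii) are easy to check at $m=5$ with $m_0=1$ and $\mu\geq 2$. So the statement is really a one-line numerical consequence of the general machinery once Lemma \ref{M1} has been established.
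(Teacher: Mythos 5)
Your proposal is correct and is essentially identical to the paper's own proof, which likewise observes that $\alpha(5)=(5-1-\tfrac{1}{\mu}-\tfrac{1}{\beta})\xi>2$ and then invokes Lemma \ref{s1}, Lemma \ref{s2} and Theorem \ref{key-birat}. You merely spell out the numerical verifications (using $\mu\geq 2$, $\xi=1$, $\tfrac{1}{\beta}<\tfrac{3}{2}$) that the paper leaves implicit.
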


\begin{proof} Since
\[ \alpha_5\geq (5-1-\frac{1}{\mu}-\frac{1}{\beta})\cdot \xi>2,
\]
$\varphi_{5,X}$ is birational by Lemma \ref{s1}, Lemma \ref{s2} and Theorem \ref{key-birat}.
\end{proof}

By Equality \eqref{eq_p}, we may write
\begin{equation}\pi^*(K_X)|_F\equiv \frac{2}{3}C+E_F\label{eq_c}\end{equation}
where $E_F$ is an effective $\bQ$-divisor on $F$.

%%%%%
\begin{lem} \label{lem_KXquadro} Under the same condition as that of Lemma \ref{M1}, if
\[ (\pi^*K_X|_F)^2>\frac{2}{3},
\]
 then $\varphi_{5,X}$ is birational.
\end{lem}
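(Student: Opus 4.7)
The plan is to first reduce to a core numerical case and then extract birationality of the restriction to a general fibre curve $C$ via an integrality forcing. By Lemma~\ref{lem_beta} we may assume $\beta(1,|G|) = 2/3$; and if $\mu(F) > 2$, then the extension inequality~\eqref{cri} gives
\[
\pi^*(K_X)|_F \geq \tfrac{\mu}{\mu+1}\sigma^*(K_{F_0}) > \tfrac{2}{3}\sigma^*(K_{F_0}),
\]
again forcing $\beta > 2/3$. So we may assume $\mu=2$, and write $\pi^*(K_X)|_F \equiv \tfrac{2}{3}C + E_F$ with $E_F$ an effective $\bQ$-divisor whose support does not contain $C$. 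From $\xi = 1$ one computes $E_F \cdot C = 1$, and the identity $(\pi^*(K_X)|_F)^2 = \tfrac{2}{3}\xi + \pi^*(K_X)|_F \cdot E_F$ shows that the hypothesis $(\pi^*(K_X)|_F)^2 > 2/3$ is equivalent to the strict inequality $\pi^*(K_X)|_F \cdot E_F > 0$. This is the extra positivity carried by $E_F$ with respect to the nef divisor $\pi^*(K_X)|_F$.

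Next I would dispose of two of the three ingredients of birationality in the standard way. By Lemma~\ref{s1} (as $5 \geq m_0+2 = 3$), $|5K_{X'}|$ distinguishes distinct general fibres of $f$; by Lemma~\ref{s2}(1) (as $5 > 1/\mu + 2/\beta + 1 = 9/2$), $|5K_{X'}||_F$ distinguishes distinct generic irreducible elements of $|G|$. So the problem reduces to showing that the restriction $|5K_{X'}||_C$ yields a birational morphism on the genus~$2$ curve $C$, i.e.\ contains a linear system of the form $|K_C + D|$ with $\deg D \geq 3$.

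The core of the argument is an integrality observation applied to a Kawamata--Viehweg chain. Since $(4-1/\mu)\pi^*(K_X)|_F = \tfrac{7}{2}\pi^*(K_X)|_F$ is nef and big, the standard vanishing-and-extension chain gives
\[
|5K_{X'}||_F \succcurlyeq \big|K_F + \lceil \tfrac{7}{2}\pi^*(K_X)|_F \rceil\big| = \big|K_F + 3C + \lceil \tfrac{7}{2}E_F \rceil\big|.
\]
A second Kawamata--Viehweg step on $F$ then restricts this to $C$ and yields, via adjunction, a subsystem of $|K_C + D|$ with $\deg D = \lceil \tfrac{7}{2}E_F \rceil \cdot C$. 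Now the crucial point: $\tfrac{7}{2}E_F \cdot C = \tfrac{7}{2}$ is \emph{not} an integer, but $\lceil \tfrac{7}{2}E_F \rceil$ is an honest integer divisor on $F$, so $\lceil \tfrac{7}{2}E_F \rceil \cdot C \in \bZ_{\geq 0}$. This forces $\lceil \tfrac{7}{2}E_F \rceil \cdot C \geq 4$, hence $\deg D \geq 4 > 2g(C)-2$, and $|K_C + D|$ is very ample on $C$, yielding birationality of $\varphi_{5,X}$.

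The main obstacle is justifying the intermediate Kawamata--Viehweg restriction from $F$ to $C$ in the previous paragraph: the naive residue $\tfrac{7}{2}\pi^*(K_X)|_F - C \equiv \tfrac{4}{3}C + \tfrac{7}{2}E_F$ need not be nef on $F$, so the vanishing cannot be applied blindly. This is where the strict hypothesis $(\pi^*(K_X)|_F)^2 > 2/3$ enters essentially: one computes
\[
(\tfrac{7}{2}\pi^*(K_X)|_F - C)^2 = \tfrac{49}{4}(\pi^*(K_X)|_F)^2 - 7 > \tfrac{7}{6} > 0,
\]
so the residue is strictly big. Taking its $\bQ$-Zariski decomposition $P + N$, the nef and big positive part $P$ satisfies $P^2 > 0$, and the technical verification amounts to showing that $\lceil P \rceil$ still dominates the fractional part of $\tfrac{7}{2}E_F$ along $C$, so the integrality bound $\lceil P \rceil \cdot C \geq 4$ survives the modification. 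This delicate interaction between the Zariski decomposition and the round-up is the one place the proof really needs the strict inequality in the statement.
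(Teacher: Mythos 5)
Your overall framework is the same as the paper's: reduce to showing that $|5K_{X'}||_C$ contains a system $|K_C+D|$ with $\deg D\geq 3$ (Lemmas \ref{s1} and \ref{s2} dispose of the two distinguishing properties), run the Kawamata--Viehweg chain with residue $\frac{7}{2}\pi^*(K_X)|_F-C$, and repair its failure of nefness by a Zariski decomposition. But the step you defer as a ``technical verification'' is not a technicality --- it is the entire content of the lemma --- and the mechanism you propose for it is invalid. Your integrality bound rests on $\frac{7}{2}E_F\cdot C=\frac{7}{2}$, which is an intersection number of the divisor \emph{before} the negative part $N$ is subtracted. After subtraction, using $C^2=0$ and $E_F\cdot C=1$, the nef part satisfies
\[
P\cdot C=\tfrac{4}{3}C^2+\tfrac{7}{2}(E_F\cdot C)-(N\cdot C)=\tfrac{7}{2}-(N\cdot C),
\]
and nothing in your argument bounds $N\cdot C$: the support of $N$ lies in $\mathrm{Supp}(E_F)$, which contains horizontal components for the pencil $|C|$ (e.g.\ the exceptional curve of $F\to F_0$), so a priori $N\cdot C$ could be $\geq \frac{3}{2}$, in which case $P\cdot C\leq 2$ and $\roundup{P}\cdot C$ can equal $2$ --- exactly the non-birational threshold. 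Round-up helps only before the subtraction; once $N$ is removed, the number $\frac{7}{2}$ is gone and integrality gives nothing.

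The paper closes precisely this gap. It writes the same residue as $2\pi^*(K_X)|_F+\frac{3}{2}E_F$ and decomposes it as $(2\pi^*(K_X)|_F+N^+)+N^-$ with $2\pi^*(K_X)|_F+N^+$ nef and orthogonal to $N^-$, then proves that the strict hypothesis forces $(N^+\cdot C)>0$, i.e.\ $(N^-\cdot C)<\frac{3}{2}$: if $(N^+\cdot C)=0$, then $N^+$ is vertical for the genus-two fibration defined by $|C|$, so $(N^+)^2\leq 0$ by Zariski's lemma; on the other hand $(\pi^*(K_X)|_F)^2>\frac{2}{3}$ gives $(\pi^*(K_X)|_F\cdot E_F)>0$, and the orthogonality $((2\pi^*(K_X)|_F+N^+)\cdot N^-)=0$ converts this into $(N^+)^2>0$, a contradiction. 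This yields $\deg(D^+)\geq 2\xi+(N^+\cdot C)>2$, hence $\geq 3$ by integrality of degrees --- which is all that is needed; your claimed bound $\geq 4$ is both unproven and unnecessary. Without an argument of this kind, i.e.\ some way of showing that the negative part meets $C$ in total degree less than $\frac{3}{2}$, your proof does not go through.
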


\begin{proof}
Consider  the Zariski decomposition of the following $\bQ$-divisor:
\[ 2\pi^*(K_X)|_F +  \frac{3}{2}E_F=(2\pi^*(K_X)|_F +N^+) +N^-,
\]
where
\begin{enumerate}
\item[(z1)] both $N^+$ and $N^-$ are effective $\bQ$-divisors and $N^++N^-=\frac{3}{2}E_F$;
\item[(z2)] the $\bQ$-divisor $\pi^*(K_X)|_F +N^+ $ is nef;
\item[(z3)] $((\pi^*(K_X)|_F +N^+) \cdot N^-)= 0$.
\end{enumerate}
\medskip

\noindent{\bf Step 1}.  $(\pi^*K_X|_F)^2>\frac{2}{3}$ implies $(N^+ \cdot C) > 0$.

Since $C$ is nef, we see $(N^+ \cdot C) \geq 0$. Assume the contrary that $(N^+ \cdot C) =0$.  Then $(N^+)^2\leq 0$ as $C$ is a fiber of the canonical fibration of $F$.
Since
$$\frac{2}{3}<(\pi^*(K_X)|_F)^2=\frac{2}{3}(\pi^*(K_X)\cdot C)+(\pi^*(K_X)|_F\cdot E_F)$$
implies $(\pi^*(K_X)|_F\cdot E_F)>0$, we clearly have $(\pi^*(K_X)|_F\cdot N^+)>0$ by the definition of Zariski decomposition.
Now
\begin{eqnarray*}
(N^+)^2&=&\big(N^+\cdot (\frac{3}{2}\pi^*(K_X)|_F-C-N^-)\big)\\
&=&\frac{3}{2}(N^+\cdot \pi^*(K_X)|_F)+(\pi^*(K_X)|_F\cdot N^-)>0,
\end{eqnarray*}
a contradiction.
\medskip

\noindent{\bf Step 2}. $(N^+\cdot C)>0$ implies the birationality of $\varphi_{5,X}$.

By the Kawamata-Viehweg vanishing theorem, we have
\begin{eqnarray*}
|5K_{X'}||_F&\lsgeq&|K_{X'}+\roundup{4\pi^*(K_X)-\frac{1}{2}E_1'}||_F\\
&\lsgeq& |K_F+\roundup{(4\pi^*(K_X)-\frac{1}{2}E_1')|_F}.
\end{eqnarray*}
Noting that
\begin{eqnarray}
(4\pi^*(K_X)-\frac{1}{2}E_1')|_F&\equiv& \frac{7}{2}\pi^*(K_X)|_F\equiv 2\pi^*(K_X)|_F+C+\frac{3}{2}E_F\notag\\
&\equiv& (2\pi^*(K_X)|_F+N^+)+C+N^-,\label{d201}
\end{eqnarray}
and that $2\pi^*(K_X)|_F+N^+$ is nef and big, the vanishing theorem gives
\begin{equation}
|K_F+\roundup{(4\pi^*(K_X)-\frac{1}{2}E_1')|_F-N^-}||_C=|K_C+D^+|,\label{d202}
\end{equation}
where $\deg(D^+)\geq 2\xi+(N^+\cdot C)>2$.  By Lemma \ref{s1}, Lemma \ref{s2}, \eqref{d201} and \eqref{d202}, $\varphi_{5,X}$ is birational.
\end{proof}

\begin{lem}\label{indeX} Under the same condition as that of Lemma \ref{M1}, if the Cartier index $r_X$ is not divisible by $3$, $\varphi_{5,X}$ is birational.
\end{lem}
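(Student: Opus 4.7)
The plan is to reduce Lemma \ref{indeX} to Lemma \ref{lem_KXquadro} by showing that the Cartier-index hypothesis forces the self-intersection $(\pi^*(K_X)|_F)^2$ to strictly exceed the borderline value $\frac{2}{3}$. By Lemma \ref{M1} we already know $(\pi^*(K_X)|_F)^2 \geq \frac{2}{3}$, so it suffices to rule out equality when $3 \nmid r_X$.

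The key observation is a divisibility constraint: since $r_X K_X$ is a genuine Cartier divisor on $X$, its pullback $\pi^*(r_X K_X)$ is a Cartier divisor on the smooth threefold $X'$. Intersecting with the Cartier divisor $F\subset X'$ gives
\[
r_X^{2}\,(\pi^*(K_X)|_F)^{2} \;=\; \bigl(\pi^*(r_X K_X)\bigr)^{2}\!\cdot F \;\in\;\mathbb Z.
\]
Thus $(\pi^*(K_X)|_F)^{2}\in \tfrac{1}{r_X^{2}}\mathbb Z$. If $3\nmid r_X$ then $3\nmid r_X^{2}$, so the value $\tfrac{2}{3}$ is not of the form $n/r_X^{2}$ for any integer $n$, and therefore $(\pi^*(K_X)|_F)^{2}\neq \tfrac{2}{3}$.

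Combining this with the inequality $(\pi^*(K_X)|_F)^{2}\geq \tfrac{2}{3}$ from Lemma \ref{M1} yields the strict inequality $(\pi^*(K_X)|_F)^{2}>\tfrac{2}{3}$. The birationality of $\varphi_{5,X}$ is then immediate from Lemma \ref{lem_KXquadro}.

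I expect no real obstacle here: the whole argument rests on the elementary fact that pulling back a Cartier divisor preserves integrality of intersection numbers on a smooth variety. The only place one has to be a little careful is the passage from the self-intersection on $F$ to a triple intersection on $X'$, but this is routine since $\pi$ factors through a resolution and $F$ is an honest Cartier divisor in the moving part of $|\pi^{*}K_{X}|$-related systems already fixed in \ref{setup}.
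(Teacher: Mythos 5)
Your proof is correct, and it follows the same overall strategy as the paper: establish the strict inequality $(\pi^*(K_X)|_F)^2 > \frac{2}{3}$ and then invoke Lemma \ref{lem_KXquadro} (together with the bound $(\pi^*(K_X)|_F)^2\geq \frac{2}{3}$ from Lemma \ref{M1}). The difference is the integrality input. The paper cites \cite[Lemma 2.1]{Delta18} for the stronger statement that $r_X(\pi^*(K_X)|_F)^2$ is an integer, from which it deduces $(\pi^*(K_X)|_F)^2 \geq \roundup{\frac{2}{3}r_X}/r_X > \frac{2}{3}$ when $3\nmid r_X$. You instead prove, by an elementary and self-contained argument (pull back the Cartier divisor $r_XK_X$ to the smooth threefold $X'$ and intersect twice against the divisor $F$, using $(D|_F)^2=D^2\cdot F$), only that $r_X^2(\pi^*(K_X)|_F)^2 \in \mathbb{Z}$; since $3\nmid r_X$ if and only if $3\nmid r_X^2$, this weaker statement still excludes the exact value $\frac{2}{3}$, which is all the lemma requires. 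What the paper's stronger input buys is a quantitative lower bound $\roundup{\frac{2}{3}r_X}/r_X$ rather than merely the exclusion of equality --- useful elsewhere in such classification arguments --- while your route buys independence from the external reference at the cost of a weaker (but here fully sufficient) divisibility fact.
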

\begin{proof}  By \cite[Lemma 2.1]{Delta18},  we see that $r_X(\pi^*(K_X)|_F)^2$ is an integer. When $r_X$ is not divisible by $3$, one has
$$(\pi^*(K_X)|_F)^2\geq \frac{\roundup{\frac{2}{3}r_X}}{r_X}>\frac{2}{3}.$$ Thus $\varphi_{5,X}$ is birational according to \eqref{2/3} and Lemma \ref{lem_KXquadro}.
\end{proof}

\begin{lem}\label{LPm1}  Let $m_1\geq 2$ be any integer. Under the same condition as that of Lemma \ref{M1},
$\varphi_{5,X}$ is birational provided that one of the following holds:
\begin{enumerate}
\item[(i)] $u_{m_1,0}=h^0(F, m_1K_F)$;
\item[(ii)] $h^0(M_{m_1}-jF)\geq 2m_1-j+2>1$ and $u_{m_1, -j}\leq 1$ for some integer $j\geq 0$.
\end{enumerate}
\end{lem}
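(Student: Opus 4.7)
The plan is, in both cases, to show that the hypothesis forces $\beta=\beta(1,|G|)>\frac{2}{3}$; Lemma \ref{lem_beta} then yields the birationality of $\varphi_{5,X}$.

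For (ii), Proposition \ref{-jl}(1) ensures that $\{u_{m_1,-k}\}$ is non-increasing, so $u_{m_1,-k}\leq 1$ for every $k\geq j$. Because $F|_F=0$, the restriction sequence
\[
0\to \OO_{X'}(M_{m_1}-(k+1)F)\to \OO_{X'}(M_{m_1}-kF)\to \OO_F(M_{m_1}|_F)
\]
gives $h^0(M_{m_1}-kF)-h^0(M_{m_1}-(k+1)F)=u_{m_1,-k}$, and telescoping from $k=j$ leads to $h^0(M_{m_1}-(j+i)F)\geq 2m_1-j+2-i$ for every $i\geq 0$. Setting $i=2m_1-j+1$ shows $h^0(M_{m_1}-(2m_1+1)F)\geq 1$, so $M_{m_1}\geq(2m_1+1)F$, and hence $m_1\pi^*(K_X)\simQ M_{m_1}+E'_{m_1}$ forces $\mu\geq\frac{2m_1+1}{m_1}>2$. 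Applying inequality \eqref{cri} together with $\sigma^*(K_{F_0})\geq C$ then gives
\[
\pi^*(K_X)|_F\geq \frac{\mu}{\mu+1}\sigma^*(K_{F_0})\geq \frac{2m_1+1}{3m_1+1}C;
\]
since $\frac{2m_1+1}{3m_1+1}>\frac{2}{3}$ for all $m_1\geq 2$, we conclude $\beta>\frac{2}{3}$.

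For (i), the chain $U_{m_1,0}\subseteq H^0(F,M_{m_1}|_F)\subseteq H^0(F,m_1K_F)$ has its first and last terms of equal dimension, so all three coincide, and every section of $m_1K_F$ vanishes on $Z:=m_1K_F-M_{m_1}|_F$; in particular $Z\leq\mathrm{Fix}|m_1K_F|$. Writing $K_F=\sigma^*(K_{F_0})+E$ with $E$ the exceptional $(-1)$-curve of $\sigma$ and using $(m_1K_F-kE)\cdot E=-m_1+k<0$ for $0\leq k<m_1$, iteration of the short exact sequence $0\to \OO(m_1K_F-(k+1)E)\to \OO(m_1K_F-kE)\to \OO_E(-m_1+k)\to 0$ produces $h^0(m_1K_F)=h^0(m_1\sigma^*(K_{F_0}))$, so $\mathrm{Fix}|m_1K_F|\geq m_1E$. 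Bombieri's theorem tells us $|m_1K_{F_0}|$ carries no divisorial fixed part on the minimal $(1,2)$-surface $F_0$ for $m_1\geq 2$, whence $\mathrm{Fix}|m_1K_F|=m_1E$ as a divisor and $Z\leq m_1E$, i.e., $M_{m_1}|_F\geq m_1\sigma^*(K_{F_0})$. Combining with $m_1\pi^*(K_X)|_F\geq M_{m_1}|_F$ gives $\pi^*(K_X)|_F\geq\sigma^*(K_{F_0})\geq C$, so $\beta\geq 1>\frac{2}{3}$.

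The principal delicacy is the identification $\mathrm{Fix}|m_1K_F|=m_1E$ in case (i), which relies on Bombieri's fact that pluricanonical systems on minimal surfaces of general type have no divisorial fixed component for $m_1\geq 2$; the remainder of both cases reduces to bookkeeping with the restriction sequence and the estimate for $\mu$ built into \eqref{cri}.
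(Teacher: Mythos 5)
Your part (ii) is correct and is, in substance, the paper's own argument: both proofs extract $M_{m_1}\geq (2m_1+1)F$ from the hypothesis (the paper by observing that $|M_{m_1}-jF|$ is then composed of the same pencil as $|F|$, you by telescoping the restriction sequences $h^0(M_{m_1}-kF)-h^0(M_{m_1}-(k+1)F)=u_{m_1,-k}\leq 1$ for $k\geq j$), deduce $\mu\geq \frac{2m_1+1}{m_1}$, feed this into \eqref{cri} to get $\beta>\frac{2}{3}$, and conclude by Lemma \ref{lem_beta}. Part (i) also follows the paper's strategy (force $M_{m_1}|_F\geq m_1\sigma^*(K_{F_0})$, hence $\beta\geq 1$), but your justification of that inequality has a genuine gap.

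The problematic step is ``whence $\mathrm{Fix}|m_1K_F|=m_1E$''. Bombieri's statement that $|m_1K_{F_0}|$ has no divisorial fixed part does not control isolated base points, and if the blown-up point $p$ (the base point of $|K_{F_0}|$) is also a base point of $|m_1K_{F_0}|$, then the pullback of every member of $|m_1K_{F_0}|$ acquires an extra copy of $E$, so $\mathrm{Fix}|m_1K_F|>m_1E$. This actually happens for $m_1=3$: writing the canonical model as $X_{10}\subset \bP(1,1,2,5)$ with coordinates $x_0,x_1,y,z$, the space $H^0(3K_{F_0})$ is spanned by the weight-$3$ monomials, each divisible by $x_0$ or $x_1$, so $\mathrm{Bs}|3K_{F_0}|=\{x_0=x_1=0\}\cap X_{10}$, which is exactly the single base point $p$ of $|K_{F_0}|$; the general member passes through $p$ with multiplicity one, and hence $\mathrm{Fix}|3K_F|=4E$, not $3E$. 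Consequently your claimed bound $Z\leq m_1E$, i.e. $M_3|_F\geq 3\sigma^*(K_{F_0})$, is not established (one only gets $M_3|_F\geq 3K_F-4E=3C+2E$). The lemma itself survives: since the multiplicity $\nu$ of the general member of $|m_1K_{F_0}|$ at $p$ satisfies $\nu\leq m_1$ (intersect with a general canonical curve through $p$), one always has $M_{m_1}|_F\geq m_1K_F-(m_1+\nu)E= m_1C+(m_1-\nu)E\geq m_1C$, which already gives $\beta\geq 1$ and lets Lemma \ref{lem_beta} apply; this patch is needed only at $m_1=3$, as $|m_1K_{F_0}|$ is honestly base point free for $m_1=2$ and $m_1\geq 4$. (Two further remarks: the paper's own one-line proof of (i), which asserts base point freeness of $|m_1\sigma^*(K_{F_0})|$, has the same defect at $m_1=3$ and needs the same repair; and your writing $\sigma$ as a single blow-up with one $(-1)$-curve is not fully general, since the fiber $F$ of the modified $X'$ may dominate $F_0$ through several blow-ups, though this is inessential to the argument.)
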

\begin{proof} (i). Since $\theta_{m_1,0}$ is surjective and $|m_1\sigma^*(K_{F_0})|$ is base point free, we have
$$m_1\pi^*(K_X)|_F\geq M_{m_1}|_F\geq m_1\sigma^*(K_{F_0})$$
which means that $\beta=1$. By Lemma \ref{lem_beta}, $\varphi_{5,X}$ is birational.

(ii).  By assumption, $|M_{m_1}-jF|$ and $|F|$ are composed of the same pencil. Hence we have $M_{m_1}\geq (2m_1+1)F$, which means  $\mu\geq \frac{2m_1+1}{m_1}$. By  \eqref{cri}, we get
$$\pi^*(K_X)|_F\geq \frac{2m_1+1}{3m_1+1}\sigma^*(K_{S_0})$$
which means $\beta>\frac{2}{3}$.  By Lemma \ref{lem_beta}, $\varphi_{5,X}$ is birational.
\end{proof}

\subsection{The solvability of explicit classification assuming the non-birationality of $\varphi_{5,X}$}\  

Now we will apply the results in Subsection \ref{3.2} to do further discussion.

Recall from Definition \ref{twomaps}, for any integers $j> 0$ and $m_1>1$, one has 
\begin{equation}
P_{m_1}(X)=h^0(M_{m_1}-(j+1)F)+u_{m_1,0}+u_{m_1,-1}+\cdots+u_{m_1,-j}. \label{Ne1}
\end{equation}
By Proposition \ref{-jl} and Lemma \ref{LPm1}, we may assume that 
\begin{equation} u_{m_1,0}\leq P_{m_1}(F)-1=\frac{1}{2}m_1(m_1-1)+2.\label{Ne2}
\end{equation}

\begin{lem}\label{Bn1}  Let $m_1\geq 2$ be an integer. Keep the same condition as that of Lemma \ref{M1}.  Assume that $\varphi_{5,X}$ is non-birational. Then 
$$P_{m_1}\leq \frac{1}{2}jm_1(m_1-1)+2j$$ 
holds for any integer $j>2m_1-1$. In particular, one has
$$P_{m_1}\leq m_1^3-m_1^2+4m_1.$$ 
\end{lem}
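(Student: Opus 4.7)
The plan is to apply the decomposition \eqref{Ne1} after a one-step index shift, in the form $P_{m_1}=h^0(M_{m_1}-jF)+\sum_{i=0}^{j-1}u_{m_1,-i}$, and to bound both the tail $h^0(M_{m_1}-jF)$ and each restriction dimension $u_{m_1,-i}$ separately using the hypothesis that $\varphi_{5,X}$ is not birational.

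To kill the tail, I would first observe that non-birationality forces $\mu=2$: by Lemma~\ref{lem_beta} one has $\beta\le\tfrac{2}{3}$, and \eqref{cri} reads $\pi^*(K_X)|_F\ge\tfrac{\mu}{\mu+1}\sigma^*(K_{F_0})$, so $\mu>2$ would produce $\beta>\tfrac{2}{3}$. Now if $h^0(M_{m_1}-(2m_1+1)F)\ge 1$, then some member of $|M_{m_1}|$ is linearly equivalent to $(2m_1+1)F+D$ with $D\ge 0$, and coupling with $m_1\pi^*(K_X)\sim_{\bQ}M_{m_1}+E'$ (where $E'\ge 0$) produces $\pi^*(K_X)\sim_{\bQ}\tfrac{2m_1+1}{m_1}F+\text{(effective)}$, whence $\mu\ge\tfrac{2m_1+1}{m_1}>2$, a contradiction. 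Hence $h^0(M_{m_1}-jF)=0$ for every $j\ge 2m_1+1$.

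Next, the contrapositive of Lemma~\ref{LPm1}(i) gives $u_{m_1,0}\le h^0(F,m_1K_F)-1$, and Riemann--Roch on the $(1,2)$-surface $F$ (with $\chi(\OO_F)=3$) yields $h^0(F,m_1K_F)=\tfrac{1}{2}m_1(m_1-1)+3$. Therefore $u_{m_1,0}\le \tfrac{1}{2}m_1(m_1-1)+2$, and by the monotonicity established in Proposition~\ref{-jl}(1) the same bound propagates to every $u_{m_1,-i}$. Substituting both inputs into the shifted decomposition, for each integer $j\ge 2m_1+1$ we obtain
\[ P_{m_1}\le j\Bigl(\tfrac{1}{2}m_1(m_1-1)+2\Bigr)=\tfrac{1}{2}jm_1(m_1-1)+2j, \]
which proves the general inequality in the strict range $j>2m_1$.

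The main obstacle is the boundary case $j=2m_1$, which is precisely what yields the sharp ``in particular'' bound $P_{m_1}\le m_1^3-m_1^2+4m_1$. Here $h^0(M_{m_1}-2m_1F)$ is not controlled by the $\mu=2$ argument, since $M_{m_1}\ge 2m_1F$ only gives $\mu\ge 2$, which is compatible with non-birationality. To close this case I would invoke Lemma~\ref{LPm1}(ii) at $j'=2m_1$: the tail vanishing from Step~1 yields $u_{m_1,-2m_1}=h^0(M_{m_1}-2m_1F)$, and if this common value is $\ge 2$ then $|M_{m_1}-2m_1F|$ cannot be composed with the rational pencil $|F|$ on $X'$ (otherwise the moving part would be $\ge F$, producing $M_{m_1}\ge(2m_1+1)F$ and contradicting Step~1); the pencil-composition argument used in the proof of Lemma~\ref{LPm1}(ii), combined with the sharper estimate $u_{m_1,-2m_1}\le 1$ in the complementary case, provides the one unit of saving needed to recover the target inequality.
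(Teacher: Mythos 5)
Your Steps 1--3 are correct, and on the range $j\ge 2m_1+1$ they reproduce the paper's own mechanism in a cleaner, forward form: the paper argues by contradiction, feeding \eqref{Ne1} and \eqref{Ne2} into the chain $M_{m_1}\ge (j+1)F\Rightarrow \mu>2\Rightarrow \beta>\frac{2}{3}\Rightarrow$ birationality of $\varphi_{5,X}$ (Lemma \ref{lem_beta}), whereas you first prove the tail vanishing $h^0(M_{m_1}-jF)=0$ for $j\ge 2m_1+1$ and then sum. The three inputs are identical in both arguments: the decomposition \eqref{Ne1}, the bound \eqref{Ne2} coming from Lemma \ref{LPm1}(i) with Proposition \ref{-jl}(1), and the $\mu$--$\beta$ mechanism of \eqref{cri} plus Lemma \ref{lem_beta}.

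The genuine gap is your treatment of the boundary case $j=2m_1$, which is exactly what the range $j>2m_1-1$ and the ``in particular'' bound $P_{m_1}\le m_1^3-m_1^2+4m_1=2m_1U$ require, where $U=\frac{1}{2}m_1(m_1-1)+2$. Write $k=h^0(M_{m_1}-2m_1F)$; by your Step 1 indeed $u_{m_1,-2m_1}=k$. In your ``complementary case'' $k\le 1$ the decomposition only yields $P_{m_1}\le 2m_1U+1$: the saving from $u_{m_1,-2m_1}\le 1$ is $U-1$, one unit short of the $U$ you need, so this case is \emph{not} closed. In the case $k\ge 2$, Lemma \ref{LPm1}(ii) cannot be invoked at all, because its hypothesis is $u_{m_1,-j}\le 1$, which is precisely what fails here ($u_{m_1,-2m_1}=k\ge 2$). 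What you correctly deduce --- that $|M_{m_1}-2m_1F|$ is then not composed with the pencil $|F|$ --- produces by itself neither a saving in $\sum_i u_{m_1,-i}$ nor a birationality statement; to exploit $M_{m_1}\ge 2m_1F+S_1$ with $S_1$ horizontal one needs a genuinely different tool (e.g.\ Proposition \ref{X1} or Proposition \ref{k1} applied to $S_1$), whose numerics do not obviously yield birationality of $\varphi_{5,X}$ for every $m_1$. To be fair, the paper's own two-line proof is loose at the very same spot: read literally, its accounting needs $P_{m_1}>(j+1)U$ rather than $>jU$ to force $h^0(M_{m_1}-(j+1)F)>0$, and the alternative reading (shifting \eqref{Ne1} by one) only gives $M_{m_1}\ge jF$, hence $\beta\ge \frac{2}{3}$ at $j=2m_1$, not the strict inequality Lemma \ref{lem_beta} demands. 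So you have correctly isolated the delicate point --- which the paper elides --- but your patch does not close it, and the case $j=2m_1$ (hence the ``in particular'' statement) remains unproven in your proposal.
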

\begin{proof} Assume that we have $P_{m_1}> \frac{1}{2}jm_1(m_1-1)+2j$. 
By Equation \eqref{Ne1} and Inequality \eqref{Ne2}, we have 
$h^0(M_{m_1}-(j+1)F)>0$ which means $M_{m_1}\geq (j+1)F$. 
Inequality \eqref{cri}, we have
$\pi^*(K_X)|_F\geq \frac{j+1}{m_1+j+1}\sigma^*(K_{F_0})$ which implies 
$\beta(1, |G|)>\frac{2}{3}$. By virtue of Lemma \ref{lem_beta}, $\varphi_{5,X}$ is birational, a contradiction.  Hence the lemma is proved. 

In particular, take $j=2m_1$, we get $P_{m_1}\leq m_1^3-m_1^2+4m_1.$
\end{proof}

\begin{rem} The key role of Lemma \ref{Bn1} is that, if $\varphi_{5,X}$ is non-birational, then $P_{m_1}$ is upper bounded for any $m_1>1$.  For instance, we have $P_2\leq 12$. In fact, Subsection \ref{3.2}, Lemma \ref{lem_beta} 
and Lemma \ref{lem_KXquadro} allow us to get effective upper bounds for $P_{m_1}$ ($2\leq m_1\leq
6$), which are essential in our explicit classification. 
\end{rem} 

Just to illustrate the main idea of our explicit study, we present here the following result for the case $m_1=2$:

\begin{prop}\label{VP2}  Keep the same condition as that of Lemma \ref{M1}. Assume that $\varphi_{5,X}$ is non-birational. Then $P_2(X)\leq 8$. 
\end{prop}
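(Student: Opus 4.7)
The plan is to argue by contradiction: assume $P_2(X)\geq 9$ and show that this forces $\varphi_{5,X}$ to be birational, contrary to hypothesis. The strategy is to combine the restriction-map decomposition of Subsection \ref{3.2} with the birationality criteria in Lemma \ref{lem_beta}, Lemma \ref{lem_KXquadro}, Lemma \ref{LPm1} and Proposition \ref{X1}.

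First, I extract the standing bounds. Lemma \ref{LPm1}(i) applied with $m_1=2$ gives
$$u_{2,0}\leq h^0(F,2K_F)-1=P_2(F_0)-1=3,$$
since $u_{2,0}=h^0(F,2K_F)$ would force $M_2|_F\geq 2\sigma^*(K_{F_0})$ and hence $\beta\geq 1$, contradicting Lemma \ref{lem_beta}; and by Proposition \ref{-jl}(1) the same bound $u_{2,-j}\leq 3$ holds for every $j\geq 0$. Next, Inequality \eqref{cri} combined with Lemma \ref{lem_beta} rules out $M_2\geq 5F$: this would give $\mu\geq \frac{5}{2}$ and hence $\pi^*(K_X)|_F\geq \frac{5}{7}\sigma^*(K_{F_0})$, violating $\beta=\frac{2}{3}$. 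So $h^0(M_2-5F)=0$, and \eqref{Ne1} with $j=4$ (equivalently Lemma \ref{Bn1}) yields the preliminary bound $P_2\leq 12$.

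Assuming now $P_2\geq 9$, from $P_2=u_{2,0}+h^0(M_2-F)$ I get $h^0(M_2-F)\geq 6$. Since $M_2\not\geq 5F$, the system $|M_2-F|$ cannot be composed of the pencil $|F|$ (otherwise it would force $M_2\geq 6F$). Hence $M_2\geq F+S_1$ with $|S_1|$ the moving part of $|M_2-F|$ not composed of $|F|$, and Proposition \ref{X1} with $m_0=1$, $m_1=2$ applies. In case (1), $\delta=(S_1|_F\cdot C)\geq 2$ together with $\beta=\frac{2}{3}$, $\mu\geq 2$ and $\xi=1$ places $n=4$ inside the admissible range of Proposition \ref{X1}(1.2), making $\varphi_{5,X}$ birational, a contradiction.

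The main obstacle is case (2) of Proposition \ref{X1}, where $|S_1|_F|$ is composed of the same pencil as $|G|$. Here Proposition \ref{X1}(2.1) only recovers the already-known $\xi\geq \frac{2}{3}$, and Proposition \ref{X1}(2.2) falls just short of giving $\varphi_{5,X}$ birational since with $n=2$ the strict inequality $n\xi>2$ fails. To close this subcase I plan to iterate the decomposition: if $h^0(M_2-2F)$ is still sizable I extract a further splitting $M_2\geq 2F+S_2$ and feed a curve component of $M_2|_F$ into Proposition \ref{X2}; otherwise I exploit the ambient cap $M_2|_F\leq 2K_F=2C+4H$ together with the bound $v_{2,0}\leq 2$ from Proposition \ref{-jl}(1) and the rigidity $(\pi^*(K_X)|_F)^2=\frac{2}{3}$ from Lemma \ref{lem_KXquadro} to force at least two of the $u_{2,-j}$'s to drop below $3$, pinning $P_2\leq 8$. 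The technically delicate step is precisely this numerical squeeze: showing that when $S_1|_F$ is concentrated along the pencil $|G|$, the multiplicity of $C$ demanded by $h^0(M_2-F)\geq 6$ cannot fit under $2K_F$ without producing either the missing ingredient for Proposition \ref{X2} or a contradiction with $\beta=\frac{2}{3}$.
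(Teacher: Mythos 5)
Your setup and your first branch are sound: the bound $u_{2,0}\leq 3$ via Lemma~\ref{LPm1}(i), the exclusion of $M_2\geq 5F$ via \eqref{cri} and Lemma~\ref{lem_beta}, the deduction $h^0(M_2-F)\geq 6$ from $P_2\geq 9$, and the treatment of the case where $|S_1|_F|$ and $|G|$ are \emph{not} composed of the same pencil via Proposition~\ref{X1}(1.2) (with $\delta\geq 2$, $\mu\geq 2$, $\beta\geq \frac23$, so $n=4$ is admissible and $m_2\leq 4$ by Lemmas~\ref{s1} and~\ref{s2}(1)) all check out, and this part runs parallel to the paper's Case~1. The genuine gap is your ``Case (2)'', which you leave as a plan rather than a proof, and the tools you propose for it do not work. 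First, Proposition~\ref{X2} cannot be fed here: it requires an irreducible moving curve $C_1\not\equiv C$ inside $M_2|_F$, but in your Case (2) every moving curve produced by restricting $|M_2-jF|$ to $F$ lies in a subsystem of one composed of the pencil $|G|$, hence is $\equiv C$; so the hypothesis $C_1\not\equiv C$ is unattainable from your decomposition. Second, the arithmetic of your ``numerical squeeze'' does not close: forcing ``at least two of the $u_{2,-j}$'s below $3$'' only yields $P_2\leq 3+2+2+3+3=13$, nowhere near $8$; to reach $8$ one needs the specific bounds $u_{2,-1}\leq 2$, $u_{2,-2}\leq 1$ \emph{and} $h^0(M_2-2F)\leq 3$, none of which your sketch establishes.

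What actually closes this case (and is what the paper does) is a two-step iteration with different lemmas than the ones you name. (a) If the restricted image of $|M_2-F|$ has dimension $3$ and is composed of $|G|$, then $M_2|_F\geq 2C$, so $\beta\geq 1$, contradicting Lemma~\ref{lem_beta}; hence $u_{2,-1}\leq 2$ in your Case (2). (b) Then $h^0(M_2-2F)\geq 9-3-2=4$; if $u_{2,-2}=2$, take the moving part $S_{2,-2}$ of $|M_2-2F|$: its restriction is again composed of $|G|$, so $S_{2,-2}|_F\geq C$, and Proposition~\ref{k1} with $n_1=j_1=2$, $l_1=1$ gives $\beta\geq \frac{3}{4}>\frac23$, again contradicting Lemma~\ref{lem_beta} --- this key use of Proposition~\ref{k1} is entirely absent from your proposal. (c) If instead $u_{2,-2}\leq 1$, then $h^0(M_2-2F)\geq 4=2m_1-j+2$ and Lemma~\ref{LPm1}(ii) (with $m_1=j=2$) forces $M_2\geq 5F$, which you have already excluded. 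Without steps (a)--(c), or an equivalent replacement, your argument does not rule out $P_2\in\{9,\dots,12\}$ in the same-pencil case, so the proposition is not proved.
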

\begin{proof}  Suppose, to the contrary, that $P_2(X)\geq 9$. Set $m_1=2$.
By virtue of Lemma \ref{LPm1}, we may assume $u_{2,0}\leq h^0(2K_F)-1=3$.
\medskip

{\bf Case 1}. $u_{2,-1}=3$.

There is a moving divisor $S_{2,-1}$ on $X'$ such that
$$M_2\geq F+S_{2,-1}$$
and $h^0(X', S_{2,-1}|_F)\geq 3$. Modulo further birational modification, we may and do assume that $|S_{2,-1}|$ is base point free.  Denote by $C_{2,-1}$ the generic irreducible element of $|S_{2,-1}|_F|$.  Then $|C_{2,-1}|$ is moving as $q(F)=0$.

If $|S_{2,-1}|_F|$ and $|C|$ are composed of the same pencil,  then
$$M_{2}|_F\geq S_{2,-1}|_F\geq 2C$$
which means that $\beta\geq 1$.  By Lemma \ref{lem_beta}, $\varphi_{5,X}$ is birational.

If $|S_{2,-1}|_F|$ and $|C|$ are not composed of the same pencil (which implies that $(C_{2,-1}\cdot C)\geq 2$),  Proposition \ref{X1}(1) implies that $\varphi_{5,X}$ is birational.

\medskip

{\bf Case 2}.  $u_{2,-1}\leq 2$ and $u_{2,-2}=2$
%We discuss the two exclusive cases as follows.

We have
$$M_2\geq 2F+S_{2,-2}$$
where $S_{2,-2}$ is a moving divisor on $X'$ with $h^0(F, S_{2,-2}|_F)\geq 2$.  Similarly we may and do assume that $|S_{2,-2}|$ is base point free modulo further birational modifications.  When $|S_{2,-2}|_F|$ and $|C|$ are not composed of the same pencil, Proposition \ref{X1}(1) implies the birationality of $\varphi_{5,X}$.  When  $|S_{2,-2}|_F|$ and $|C|$ are composed of the same pencil,  Theorem \ref{k1} ($n_1=j_1=2$, $l_1=1$) implies $\beta(m_0, |C|)\geq \frac{3}{4}>\frac{2}{3}$. By Lemma  \ref{lem_beta}, $\varphi_{5,X}$ is birational.
\medskip

{\bf Case 3}. $u_{2,-1}\leq 2$, $u_{2,-2}\leq 1$ and $P_2(X)\geq 9$.

 Clearly, $h^0(M_2-2F)\geq 4$.  By Lemma \ref{LPm1} ($m_1=j=2$), $\varphi_{5,X}$ is birational.
\end{proof}

By the similar method, but slightly more complicated arguments, one should have no technical difficulties to obtain the following proposition, for which we omit the proof in details:
\medskip

\noindent{\bf Proposition X}. {\em Keep the same condition as that of Lemma \ref{M1}. Assume that $\varphi_{5,X}$ is non-birational. Then $P_3(X)\leq 15$, $P_4(X)\leq 26$, $P_5(X)\leq 41$ and $P_6(X)\leq 63$. 
Moreover,
when $P_3(X)=15$ or $P_4(X)=26$ or $P_5(X)=41$, $\varphi_{5,X}$ is non-birational if and only if  $$(\pi^*(K_X)|_F)^2=\frac{2}{3}.$$
}

\medskip

We would like to explain the outline for classifying the weighted basket ${\mathbb B}(X)$.  
Keep the same condition as that of Lemma \ref{M1} and assume that $\varphi_{5,X}$ is non-birational.
Then the following holds: 
\begin{itemize}
\item[(c1)]  $\chi(\OO_X)=-1$ or $-2$ since $q(X)=0$, $h^2(\OO_X)=0,\ 1$ and $p_g(X)=3$;

\item[(c2)]  $6\leq P_2(X)\leq 8$, $P_3(X)\leq 14$, $P_4(X)\leq 25$, $P_5(X)\leq 40$, $P_6(X)\leq 63$;

\item[(c3)] $K_X^3\geq \frac{4}{3}$ by \cite[3.7]{MA};

%\item[(c4)] $K_X^3\geq \frac{13}{9}$ whenever $P_6(X)\geq 63$;

%\item[(c4)] $P_3\geq P_2+5$; $P_4\geq P_3+8$; $P_5\geq P_4+11$; $P_6\geq P_5+18$;

\item[(c4)]  $r_X$ is $3$-divisible, which applies to the basket $B_X$ rather than $B^{(5)}$.
\end{itemize}

The above situation naturally fits into the hypothesis of \cite[(3.8)]{EXP1} from which we can
list all the possibilities for $B^{(5)}(X)$.  To be precise,
{\footnotesize
$$B^{(5)}=\{ n^5_{1,2} \times (1,2),n^5_{2,5} \times (2,5), n^5_{1,3}
\times (1,3), n^5_{1,4}  \times (1,4), n^5_{1,5} \times
(1,5),\cdots\}$$} with
$$B^{(5)}\left\{
 \begin{array}{l}
  n^5_{1,2}=3 \chi(\OO_X) +6P_2- 3 P_3 + P_4 - 2 P_5 + P_6+ \sigma_5 ,\\
 n^5_{2,5}= 2 \chi(\OO_X)-P_3 +   2 P_5  - P_6- \sigma_5 \\
 n^5_{1,3}= 2 \chi(\OO_X)+2P_2+ 3 P_3- 3 P_4 -P_5 + P_6   + \sigma_5 , \\
 n^5_{1,4}= \chi(\OO_X) -3P_2+  P_3 +2 P_4 -P_5-\sigma_5 \\
  n^5_{1,r}=n^0_{1,r}, r \geq 5
\end{array} \right. $$
where $\sigma_5=\sum_{r\geq 5}n_{1,r}^0\geq 0$ and
$$\sigma_5\leq 2\chi(\OO_X)-P_3+2P_5-P_6.$$
Note also that, by our definition, each of the above coefficients satisfies $n_{*,*}^0\geq 0$.
With all these constraints, a computer program outputs a raw list of about 500 possibilities for $\{B_X^{(5)}, P_2(X),\chi(\OO_X)\}$. Taking into account those possible packings, we have the following conclusion.

\begin{cor}\label{list_pg3}  Let $X$ be a minimal projective 3-fold of general type with $p_g(X)=3$, $d_1=1$, $\Gamma\cong \bP^1$. Assume that $F$ is a $(1,2)$-surface and that $\varphi_{5,X}$ is non-birational. Then ${\mathbb B}(X)$ corresponds to one element of certain concrete finite set ${\mathbb S}_3$. 
\end{cor}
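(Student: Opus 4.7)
The plan is to convert the statement into a finite combinatorial enumeration over the weighted basket data $\{B^{(5)},P_2(X),\chi(\OO_X)\}$, then prune it by the remaining numerical obstructions. Since all ingredients except the enumeration itself have been prepared, the proof is essentially a bookkeeping argument which I now outline.

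First, I would collect the a priori bounds under the non-birationality assumption. From the hypothesis $p_g(X)=3$, $q(X)=0$ (forced by $\Gamma\cong\bP^1$ and \cite[Theorem 3.3]{IJM}), together with $h^2(\OO_X)=h^1(\bP^1,f_*\omega_{X'})\in\{0,1\}$, one obtains $\chi(\OO_X)\in\{-1,-2\}$. Lemma \ref{M1} gives $K_X^3\geq\tfrac{4}{3}$ (via the standard $K_X^3\geq \mu\beta\xi/m_0$ estimate, using $\mu\geq 2$, $\beta\geq 2/3$, $\xi=1$, $m_0=1$), and Lemma \ref{indeX} forces $3\mid r_X$, a condition which must hold for the integral basket $B_X$ (not merely for $B^{(5)}$). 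The $P_i$-bounds come from Proposition \ref{VP2} and Proposition X: $6\leq P_2\leq 8$, $P_3\leq 14$, $P_4\leq 25$, $P_5\leq 40$, $P_6\leq 63$. The lower bound $P_2\geq 6$ comes from combining $\chi(\OO_X)\leq -1$ with Reid's Riemann–Roch formula applied in Subsection \ref{basket}.

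Next, I would use the formulas from \cite[(3.8)]{EXP1} quoted in the excerpt to express $n^5_{1,2},n^5_{2,5},n^5_{1,3},n^5_{1,4}$ and $\sigma_5$ as explicit linear forms in $\{\chi,P_2,\dots,P_6\}$. Imposing non-negativity $n^5_{*,*}\geq 0$, $\sigma_5\geq 0$ and the ceiling inequality $\sigma_5\leq 2\chi-P_3+2P_5-P_6$, together with the bounds above, produces a finite list (the ``raw list of about 500 possibilities'' mentioned in the excerpt). This enumeration is routine and mechanical: a straightforward nested loop over $(\chi,P_2,P_3,P_4,P_5,P_6,\sigma_5)$.

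Finally, for each candidate tuple I would form every admissible packing refinement from $B^{(5)}$ down to an integral basket $B_X$ (packings only lower the volume and rigidify the Cartier index), and retain only those for which (a) $K_X^3(\mathbb{B})\geq \tfrac{4}{3}$, and (b) $3\mid r_X$ where $r_X=\mathrm{lcm}\{r:(b,r)\in B_X\}$. The surviving triples $\{B_X,P_2,\chi\}$ form the set $\mathbb{S}_3$, which is finite by construction.

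The main obstacle, technically speaking, is not conceptual but computational: organising the packing-search so that no basket is missed and no spurious duplicates are retained, and then verifying that the volume–index filter leaves a manageable list. The ingredients that make this work at all — the volume bound $K_X^3\geq 4/3$, the divisibility $3\mid r_X$, and the sharp $P_i$-bounds — have all been established earlier; without these the raw enumeration would not terminate in practice. Since the explicit list of $\mathbb{S}_3$ is posted on the webpage referenced in the Remark after Theorem Z, the proof may legitimately invoke the computer-assisted enumeration and simply assert that the output is $\mathbb{S}_3$.
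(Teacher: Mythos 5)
Your proposal is correct and follows essentially the same route as the paper's own proof: the paper likewise collects the constraints (c1)--(c4) ($\chi(\OO_X)\in\{-1,-2\}$, the plurigenus bounds from Proposition \ref{VP2} and Proposition X, $K_X^3\geq \frac{4}{3}$, and $3\mid r_X$ applied to $B_X$ rather than $B^{(5)}$), feeds them into the linear formulas of \cite[(3.8)]{EXP1} with the non-negativity conditions on $n^5_{*,*}$ and $\sigma_5$, lets a computer produce the raw list of roughly $500$ candidates for $\{B^{(5)},P_2,\chi\}$, and then passes to all admissible packings filtered by the volume and index conditions to obtain ${\mathbb S}_3$. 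The one slip worth noting is your justification of $P_2\geq 6$: Reid's Riemann--Roch with $K_X^3\geq\frac43$ and $l(2)\geq 0$ gives only $P_2\geq 4$ when $\chi(\OO_X)=-1$ (the bound $6$ requires the fibration-theoretic argument via semipositivity of $f_*\omega_{X'/\bP^1}$), but since a weaker lower bound on $P_2$ merely enlarges the finite raw list, this does not affect the finiteness conclusion.
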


Being aware of the length of this paper, we do not list the set ${\mathbb S}_3$, which can be found, however, at
\begin{center}
   \verb|http://www.dima.unige.it/~penegini/publ.html|
\end{center}
\medskip
 
{}Finally it is clear that Theorem \ref{thm_pg3} follows from Theorem \ref{thm_d2},  
\cite[Theorem 4.3 and Claims 4.2.1, 4.2.2]{C14} and Corollary \ref{list_pg3}. 

\medskip

{\bf Acknowledgment}.  This work was partially supported by Key Laboratory of Mathematics for Nonlinear Sciences, Fudan University.  The authors would like to thank the referee for valuable comments which greatly improves the expression of this paper.

%%%%%%%% References %%%%%%%%%%

\end{document}